\numberwithin{equation}{section}
\renewcommand{\section}{\@startsection{section}{1}{0pt}{20pt}{6pt}{\large\bf}}
\renewcommand{\@seccntformat}[1]{\csname the#1\endcsname.\ }
\def\footnoterule{\kern -3pt \hrule width 2.7 true cm \kern 2.6pt}
\def\ni{\noindent}
\def\vs{\vspace}
\def\hs{\hspace}
\def\EE{\mathsf E}
\def\PP{\mathsf P}
\def\cF{{\cal F}}
\def\cD{{\cal D}}
\def\R{I\!\!R}
\def\L{I\!\!L}
\newcommand{\eps}{\varepsilon}
\newcommand{\p}{\! +\! }
\newcommand{\m}{\! -\! }
\newtheorem{theorem}{Theorem}[section]
\newtheorem{lemma}[theorem]{Lemma}
\newtheorem{coroll}[theorem]{Corollary}
\newtheorem{prop}[theorem]{Proposition}
\newtheorem{remark}[theorem]{Remark}
\newtheorem{ass}[theorem]{Assumption}
\newcommand{\msc}[1]{\textbf{MSC2010 Classification:} #1.}
\newcommand{\keywords}[1]{\textbf{Key words:} #1.}
\newcommand{\ackn}[1]{\textbf{Acknowledgments:} #1.}
\begin{document}

\title{\textbf{On the optimal exercise boundaries \\ of swing put options}}
\author{T. De Angelis\thanks{Corresponding author. School of Mathematics, University of Leeds, Woodhouse Lane,
Leeds LS2 9JT, UK; \texttt{t.deangelis@leeds.ac.uk}}\:\:\:\: \emph{and} \:\:\:\:Y. Kitapbayev\thanks{Questrom School of Business, Boston University, 595 Commonwealth Avenue, 02215, Boston, MA, USA; \texttt{yerkin@bu.edu}}
}
\date{\today}
\maketitle

%%%%%%%%%%%%%%%%%%%%%%%%%%%%%%%%%%%%%%%%%%%%%%%%%%%%%%%%%%%%%%%%%%%%%%%%%%%%%%%
%%% Abstract %%%
%%%%%%%%%%%%%%%%%%%%%%%%%%%%%%%%%%%%%%%%%%%%%%%%%%%%%%%%%%%%%%%%%%%%%%%%%%%%%%%

{\par \leftskip=2.6cm \rightskip=2.6cm \footnotesize
We use probabilistic methods to characterise time dependent optimal stopping boundaries in a problem of multiple optimal stopping  on a finite time horizon. Motivated by financial applications we consider a payoff of immediate stopping of ``put'' type and the underlying dynamics follows a geometric Brownian motion. The optimal stopping region relative to each optimal stopping time is described in terms of two boundaries which are continuous, monotonic functions of time and uniquely solve a system of coupled integral equations of Volterra-type. Finally we provide a formula for the value function of the problem.
\par}
\vspace{+8pt}

%%%%%%%%%%%%%%%%%%%%%%%%%%%%%%%%%%%%%%%%%%%%%%%%%%%%%%%%%%%%%%%%%%%%%%%%%%%%%%%%%%%%%%%%%%%%%
\msc{60G40, 60J60, 35R35, 91G20}
\vs{+5pt}

%\jel{C61, G13}
\vspace{+8pt}

\noindent\keywords{optimal multiple stopping, free-boundary problems, swing options, American put option}

%%%%%%%%%%%%%%%%%%%%%%%%%%%%%%%%%%%%%%%%%%%%%%%%%%%%%%%%%%%%%%%%%%%%%%%%%%%%%%%
%%% Sections %%%
%%%%%%%%%%%%%%%%%%%%%%%%%%%%%%%%%%%%%%%%%%%%%%%%%%%%%%%%%%%%%%%%%%%%%%%%%%%%%%%

%\vs{-18pt}

%\vs{-18pt}

\section{Introduction}
%%%%%%%%%%%%%%%%%%%%%%

In this paper we provide an analytical characterisation of the optimal stopping boundaries for a problem of optimal multiple stopping on finite time horizon. The study of this kind of problems has been recently motivated by the increasing popularity in financial industry of the so-called swing options. These are American-type options with multiple early exercise rights mostly used in the energy market (see \cite{Lem14} for a survey).

In particular here we consider a model for an option with put payoff, $n\in\mathbb{N}$, $n\ge2$ exercise rights, strike price $K>0$, maturity $T$ and \emph{refracting period} $\delta>0$. The parameter $\delta$ represents the minimum amount of time that the holder must wait between two consecutive exercises. The value of the option is denoted by $V^{(n)}$ and is given in terms of the following optimal multiple stopping time problem
\begin{align}\label{eq:V1}
V^{(n)}(t,x):=\sup_{\mathcal{S}^n_{t,T}}\EE \Big[\sum^n_{i=1}e^{-r(\tau_i-t)}\big(K\m X^{t,x}_{\tau_i}\big)^+\Big],\qquad(t,x)\in[0,T]\times (0,\infty)
\end{align}
where $r>0$ is the risk-free rate, $X^{t,x}$ is a geometric Brownian motion started at time $t\in[0,T)$ from $x>0$ and the optimisation is taken over the set of stopping times of $X^{t,x}$ of the form
\begin{align}\label{eq:StT}
\hs{-15pt}\mathcal{S}^n_{t,T}:=\big\{(\tau_n,\tau_{n-1},\ldots \tau_1):\tau_n\in[t,T\m(n\m 1)\delta],\tau_i\in[\tau_{i+1}\p\delta,T\m(i\m 1)\delta],i= n\m1,\ldots 1\big\}.%\nonumber
\end{align}
The index $k$ of the stopping time $\tau_k$ represents the number of remaining rights and the structure of $\mathcal{S}^n_{t,T}$ imposes to the option's holder to exercise all rights before the maturity $T$.
%Here we are also assuming that $e^{-r(s-t)}X^{t,x}_s$, $s\in[t,T]$ is a martingale.

In order to understand the financial meaning of problem \eqref{eq:V1} it is useful to observe that %since the discounted spot price is a martingale
\begin{align*}
\EE \Big[e^{-r(\tau_i-t)}\big(K\m X^{t,x}_{\tau_i}\big)^+\Big]=&\EE \Big[e^{-r(\tau_i-t)}\big(K\vee X^{t,x}_{\tau_i}\m X^{t,x}_{\tau_i}\big)\Big]
%=\EE \Big[ e^{-r(\tau_i-t)} \big(K\vee X^{t,x}_{\tau_i}\big)\Big]\m x
\end{align*}
for each $i=1,2,\ldots n$ and this payoff
%Hence solving \eqref{eq:V1} is equivalent to solving
%\begin{align}\label{max}
%\sup_{\mathcal{S}^n_{t,T}}\EE \Big[\sum^n_{i=1} e^{-r(\tau_i-t)} \big(K\vee X^{t,x}_{\tau_i}\big)\Big],
%\end{align}
can be used to model the following situation. In the energy market the seller of our option is a local energy supplier (for instance gas provider) and the buyer is a big extractor/distributor who trades on a global scale; both enjoy some storage facility. The local provider needs $n$ units of a commodity by time $T$ (for households' supply for instance) and agrees to buy these at the largest between the spot price $X$ and the strike $K$ on dates of the option holder's choosing. The holder commits to supplying the commodity by $T$ but can use the flexibility allowed by the contract to maximise profits. The value of this contract is therefore \eqref{eq:V1} because the option's holder sells a commodity with spot price $X$ and receives $X\vee K$. In this context the refracting time may also be due to physical constraints on the delivery. Additional details on the formulation of our problem are provided in Section \ref{sec:prob}.

We would like to emphasize that to date and to the best of our knowledge optimal boundaries of multiple stopping problems with finite time horizon have only been studied numerically, mostly in connections to swing options (cf.~for instance \cite{BLN11}, \cite{Car-Tou08}, \cite{Ib04} and \cite{LS09}) whereas problems on infinite horizon were studied theoretically by \cite{Car-Tou08} and \cite{Car-Da08}. These studies highlighted very intricate connections of recursive type: in particular the value and the optimal boundaries of a multiple stopping problem with $n$ admissible stopping times depend on those of all the problems with $k=1,2,\ldots, n-1$ admissible stopping times. As a consequence it turns out that each one of the latter problems must be solved prior to addressing the former one.

Since our work seems the first one addressing a fully theoretical characterisation of time-dependent optimal stopping boundaries for multiple stopping problems, the mathematical interest in the specific problem \eqref{eq:V1} finds natural motivations. Indeed very often the analysis of stopping boundaries for finite horizon problems with a single stopping time must be carried out on a case by case basis, due to the complexity of the methodologies involved. In this respect the American put is perhaps the most well-studied (and most popular) of such examples, and properties of its optimal boundary have been the object of a long list of papers (e.g. \cite{CJM}, \cite{Jacka} and \cite{Pe-0}). Problem \eqref{eq:V1} is therefore an ideal starting point for the analysis of free-boundary problems related to optimal multiple stopping. %from the conceptual point of view and will offer important insights for applications.

In this work we use probabilistic arguments to show that there exists a sequence $(\tau^*_i)_{i=n,\ldots 1}\in \mathcal{S}^n_{t,T}$ of optimal stopping times for \eqref{eq:V1} and we prove that each $\tau^*_i$, $i=n,\ldots 2$ is attained as the first exit time of the process $(t, X_t)$ from a set $C^{(i)}$. The latter is bounded from above and from below in the $(t,x)$-plane by two continuous monotonic curves, $b^{(i)}$ and $c^{(i)}$, functions of time (for $i=1$, $c^{(1)}=+\infty$ and $b^{(1)}$ is the American put optimal boundary). Our main results are the \emph{existence} and the \emph{regularity properties} of these optimal boundaries, which for the case of $n=2$ are given in Theorem \ref{thm:b2c2}, Proposition \ref{prop:finite-c} and Theorem \ref{thm:continuity} whereas their generalisation to any $n\ge2$ can be found in Section \ref{sec:gen}. Finally, for each $i=n,\ldots 2$, we characterise such boundaries as the unique solution of a system of coupled non-linear integral equations of Volterra type which we also solve numerically in some examples (see Figures $1$, $2$ and $3$). In line with the financial interpretation of problem \eqref{eq:V1} we show that the option's price is the sum of a European part and an early exercise premium which depends on the optimal stopping boundaries (see Theorem \ref{thm:eep2} for the case $n=2$ and Theorem \ref{thm:bdr-n} for the general case).

It is important to discuss the key difficulties of the free boundary analysis in \eqref{eq:V1} as these reflect more general theoretical questions that must be taken into account when studying problems of optimal multiple stopping.

It is known that \eqref{eq:V1} may be reduced via a recursive argument to a problem with a single stopping time (see Lemma \ref{lem:reduc} below) where the objective is to maximise a functional of the form $\EE e^{-r\tau}G^{(n)}(t+\tau, X^{t,x}_\tau)$ with suitable $G^{(n)}$. For each $n\ge 2$ the function $G^{(n)}$ depends on the value function $V^{(n-1)}$ of problem \eqref{eq:V1} with $n$ replaced by $n-1$ and it cannot be expressed explicitly as a function of $t$ and $x$ (see the discussion following Lemma \ref{lem:reduc} below). So for example in the case of $n=2$, the function $G^{(2)}$ will be a function of the American put value, denoted here $V^{(1)}$, and its most explicit form will be given in terms of a complicated functional of the American put optimal boundary $b^{(1)}$ (see the expressions \eqref{3.2} and \eqref{3.4} below). The latter is known to enjoy some monotonicity and regularity properties but their effect on $G^{(2)}$ is not easy to determine and no explicit formula for $b^{(1)}$ exists in the literature.

General probabilistic analysis of free boundaries associated to stopping problems in which the gain function is not given explicitly in terms of the state variables may be addressed in very few cases under ad-hoc assumptions. In \eqref{eq:V1} the gain function is dictated by the structure of the American put and we must compensate for the lack of transparency of $G^{(2)}$ with a thorough study of its regularity, and of an associated PDE problem (see Proposition \ref{rem:reguC^2}). Once that is accomplished we can use these results joint with fine estimates on the local time of the geometric Brownian motion to derive existence and other properties of the optimal boundaries.

Due to the unusual setting we work in, our preliminary study of $G^{(n)}$ and the proofs of our main results (in particular those of Theorem \ref{thm:b2c2} and Proposition \ref{prop:finite-c}) contain several technical points which extend the existing methods for the free-boundary analysis in optimal stopping theory and which we believe can be used to construct a more systematic approach to the study of optimal multiple stopping boundaries.

Finally, from a financial point of view the discovery of an upper optimal exercise boundary for our contract is an interesting result and it is in contrast with the single boundary observed in the American put problem. At a first sight this fact may look slightly counterintuitive but it turns out to be a consequence of the interplay between the time value of money and the constraints imposed on the set of stopping times $\mathcal{S}^n_{t,T}$ (see also Remark \ref{rem:optexn.1}). Here we also show that the size of the stopping set increases with the number of rights as conjectured in \cite{Car-Tou08} (see Remark \ref{rem:CarTou} below). This and other features will be discussed in fuller details in the rest of the paper.

The paper is organised as follows. In Section \ref{sec:swing} we provide a brief overview of the existing literature on optimal multiple stopping problems and their use in modeling swing contracts. Then in Section \ref{sec:prob} we introduce in details the setting of problem \eqref{eq:V1} outlined above. The full solution to our problem is given in Section \ref{sec:solution} which is split into two main subsections. Section \ref{sec:n=2} is devoted to the detailed analysis of a swing option with two exercise rights.
Instead we use Section \ref{sec:gen} to extend the results of Section \ref{sec:n=2} to the case of swing options with arbitrary many rights.
The paper is completed by a technical appendix.

\section{Formulation of the problem and background material}
We provide here some basic references on swing options and optimal multiple stopping and then we formulate problem \eqref{eq:V1} in details. In the last part of the section we recall some background material regarding the American put which we will use throughout the paper.

\subsection{An overview on swing options and optimal multiple stopping}\label{sec:swing}

Early mathematical models of swing contracts date back to the 80's (among others see \cite[Sec.~1 and 2]{JaiRonTom04} and references therein) and many authors have so far contributed to their development (see for instance the survey~\cite{Lem14}).
Numerical studies of swing options with volume constraints and limited number of trades at each exercise date may be found in \cite{JaiRonTom04} and \cite{Ib04}. Recently those works have been extended by \cite{BBP09}, \cite{Gobet}, \cite{HHK09}, \cite{WaLee11}, among others, to include more general dynamics of the underlying and complex structures of the options (for example jump dynamics and regime switching opportunities).

To the best of our knowledge a first theoretical analysis of the optimal stopping theory underpinning swing contracts was given in \cite{Car-Tou08} and it was based on martingale methods and Snell envelope. Later on a systematic study of martingale methods for multiple stopping time problems was provided in \cite{KQR-1} under the assumption of c\`adl\`ag positive processes. A characterisation of the related value functions in terms of excessive functions was given in \cite{Car-Da08} in the case of one-dimensional linear diffusions whereas duality methods were studied in \cite{MH04}, \cite{AH10} and \cite{Ben11}, among others.

In the Markovian setting variational methods and BSDEs techniques have been widely employed. In \cite{BLN11} for instance the HJB equation for a swing option with volume constraint is analysed both theoretically and numerically.
Variational inequalities for optimal multiple stopping problems have been studied for instance in \cite{LS09} in the (slightly different) context of evaluation of stock options and in \cite{LBM11} in an extension of results of \cite{Car-Tou08} to one-dimensional diffusions with jumps. A study of BSDEs with jumps related to swing options may be found instead in \cite{BPTW12}.

%%%%%%%%%%%%%%%%%%%%%%%%%%%%%%%%%%%%%%%%%%%%%%%%%%%%%%%%%

\subsection{Formulation of the problem}\label{sec:prob}

It will be convenient in the following to refer to the value function \eqref{eq:V1} as to the swing option \emph{price} or \emph{value}.

On a complete probability space $(\Omega, \cF, \PP)$ we consider the Black and Scholes model for the underlying asset dynamics
\begin{equation} \label{2.1}
 dX_{s}=rX_{s}\,ds+\sigma X_{s}\,dB_{s}\,,\;\;\; X_0=x>0
 \end{equation}
where $B$ is a standard Brownian motion, $r>0$ is the risk free-interest rate, and $\sigma>0$ is the volatility coefficient. We denote by $(\cF_s)_{s\ge 0}$ the natural filtration generated by $(B_s)_{s\ge0}$ completed with the $\PP$-null sets and by $(X^x_s)_{s\ge0}$ the unique strong solution of \eqref{2.1}. It is well known that for any $x>0$ it holds
\begin{align}\label{eq:GBM}%\hs{+6pc}
X^x_s=x\,e^{\sigma B_s+(r-\frac{1}{2}\sigma^2)s}\qquad \text{for $s\ge0$}
\end{align}
and the infinitesimal generator associated to $X$ is given by
\begin{align*}%\label{def:LX}%\hs{+4pc}
\L_X f(x):=rxf'(x)+\tfrac{1}{2}\sigma^2x^2f''(x)\qquad\text{for $f\in C^2(\R\,)$.}
\end{align*}

For the reader's convenience we recall here \eqref{eq:V1}:
\begin{align}\label{eq:V1bis}
V^{(n)}(t,x):=\sup_{\mathcal{S}^n_{t,T}}\EE \Big[\sum^n_{i=1}e^{-r(\tau_i-t)}\big(K\m X^{t,x}_{\tau_i}\big)^+\Big],\qquad(t,x)\in[0,T]\times (0,\infty)
\end{align}
where the supremum is taken over the set of stopping times of $X^{t,x}$ of the form
\begin{align*}%\label{eq:StTbis}
\hs{-15pt}\mathcal{S}^n_{t,T}:=\big\{(\tau_n,\tau_{n-1},\ldots \tau_1):\tau_n\in[t,T\m(n\m 1)\delta],\tau_i\in[\tau_{i+1}\p\delta,T\m(i\m 1)\delta],i= n\m1,\ldots 1\big\}.%\nonumber
\end{align*}
The function $V^{(n)}$ denotes the price of a swing option with a put payoff $(K-x)^+$, strike $K>0$, maturity $T>0$, $n$ exercise rights and refracting period $\delta>0$. Since $\delta>0$ is the option holder's minimum waiting time between two consecutive exercises of the option it is natural to consider $T$, $n$ and $\delta$ such that $T\ge (n\m 1)\delta$.

Notice that in \eqref{eq:V1bis} we denoted by $X^{t,x}$ the solution of \eqref{2.1} started at time $t>0$ with initial condition $X_t=x$. However in what follows we will often use that $X^{x}_s = X^{t,x}_{t+s}$ in law for any $s\ge0$. Moreover, since we are in a Markovian framework for any Borel-measurable real function $F$ we will often replace $\EE\big[ F(t\p s,X^x_s)\big]$ by $\EE_x\big[ F(t\p s,X_s)\big]$ and $\EE\big[ F(t\p s, X^x_{t+s})\big|\cF_t\big]$ by $\EE_x\big[ F(t\p s, X_{t+s})\big|\cF_t\big]=\EE_{X^x_t}\big[ F(t\p s,X_s)\big]$ where $\EE_x$ is the expectation under the measure $\PP_x(\,\cdot\,)=\PP(\,\cdot\,|X_0=x)$.

The peculiarity of \eqref{eq:V1bis} is embedded in the definition of the class of admissible stopping times $\mathcal{S}^{n}_{t,T}$ which sets the following constraint: the option's holder must exercise all rights. In other words if the $k$-th right is not used strictly prior to $T\m(k\m 1)\delta$ all subsequent rights can only be exercised at their maturity, i.e.~the holder remains with a portfolio of $k-1$ European put options with times to maturity $\delta,2\delta,\ldots (k\m1)\delta$. On the other hand, in case of an early exercise of the $k$-th right the holder gets an immediate payoff $(K-X)^+$ and remains with a swing option with $k\m 1$ exercise rights the earliest of which can be used after waiting the refracting period $\delta>0$.

Swing contracts including an obligation for the holder to use a minimum number of rights are traded in the energy market and have been analysed since the early papers \cite[Sec.~3]{Ib04} and \cite[Sec.~2.3.1]{JaiRonTom04}, amongst many others. Our formulation considers the limiting case in which all the rights must be exercised and can be motivated by the option's seller actual need for the underlying commodity as discussed in the introduction.

From a purely mathematical point of view this formulation is of interest as it is opposite to the one considered in \cite{Car-Tou08} where the holder has no obligation to use a minimum number of rights. The \emph{numerical} investigation of the option with finite maturity in \cite{Car-Tou08} shows that the optimal stopping region associated to each one of the admissible stopping times lies entirely below the continuation set and has a single exercise boundary below the strike $K$. Here instead we will see how the constraint on $\mathcal{S}^n_{t,T}$ may induce the option holder to use one of the rights even if the asset price $X$ is larger than the strike $K$ (i.e.~the put payoff equals zero) in order to maintain the future early exercise rights (see Remark \ref{rem:optexn.1} for further details).

Both our example and the one in \cite{Car-Tou08} are necessary intermediate steps towards the full solution in the general case of a swing contract with constraints on the minimum number of exercise dates.
Finally we notice that since the option in \cite{Car-Tou08} allows the holder more flexibility, its value provides an upper bound for $V^{(n)}$ in \eqref{eq:V1bis}.

\subsection{Background material on the American put}
Consistently with our definition of $V^{(n)}$ we note that for $n=1$ the value function $V^{(1)}$ coincides with the value function of the American put option with maturity $T>0$ and strike price $K>0$.
With a slight abuse of notation we also denote $V^{(0)}$ the price of the European put option with maturity $T>0$ and strike $K>0$. In our Markovian framework for $t\in[0,T]$ and $x>0$ we have
\begin{align}\label{2.5} %\hs{7pc}
V^{(0)}(t,x)=\EE\Big[e^{-r(T-t)}(K\m X^x_{T-t})^+\Big]
\end{align}
and
\begin{align}\label{2.6} %\hs{7pc}
&V^{(1)}(t,x)=\sup_{0\le\tau\le T-t}\EE\Big[e^{-r\tau}(K\m X^x_{\tau})^+\Big]
\end{align}
where $\tau$ is a $(\cF_t)$-stopping time.

We now recall some well known results about the American put problem (see e.g.~\cite[Sec.~25]{PS} and references therein) which will be used as building blocks of our approach. We define the sets
\begin{align} \label{2.7a} %\hs{4pc}
&C^{(1)}:=\ \{\, (t,x)\in[0,T)\! \times\! (0,\infty):V^{(1)}(t,x)>(K\m x)^+\, \} \\[3pt]
 \label{2.7b}&D^{(1)}:=\ \{\, (t,x)\in[0,T)\! \times\! (0,\infty):V^{(1)}(t,x)=(K\m x)^+\, \}
 \end{align}
and recall that the first entry time of $(t,X_t)$ into $D^{(1)}$ is an optimal stopping time in \eqref{2.5}. Moreover, there exists a unique continuous boundary $t\mapsto b^{(1)}(t)$ separating $C^{(1)}$ from $D^{(1)}$, with $0<b^{(1)}(t)<K$ for $t\in[0,T)$, and the stopping time
\begin{align*}%\label{2.8} %\hs{4pc}
\tau_{1}:=\inf\big\{0\leq s\leq T\m t\,:\,X^x_{s}\leq b^{(1)}(t\p s)\big\}
\end{align*}
is optimal in \eqref{2.5}. It is also well known that $V^{(1)}\in C^{1,2}$ in $C^{(1)}$ and it solves
\begin{align*}%\label{pdeV01}%\hs{+4pc}
\big(V^{(1)}_t+\L_XV^{(1)}-rV^{(1)}\big)(t,x)=0\qquad\text{for $x>b^{(1)}(t)$, $t\in[0,T)$.}
\end{align*}

The map $x\mapsto V^{(1)}_x(t,x)$ is continuous across the optimal boundary $b^{(1)}$ for all $t\in [0,T)$ (so-called \emph{smooth-fit} condition) and $\big|V_x\big|\le 1$ on $[0,T]\times(0,\infty)$ (cf.~\cite{PS} eq.~(25.2.15), p.~381 and notice that $V^{(1)}(t,\,\cdot\,)$ is decreasing). A change-of-variable formula (cf.~\cite{Pe-1}) then gives a representation of $V^{(1)}$ which we will frequently use in the rest of the paper, i.e.
\begin{align}\label{chvarV}
e^{-rs}V^{(1)}(t\p s,X^x_s)=V^{(1)}(t,x)-rK\int_0^s{e^{-ru}I(X^x_u\le b^{(1)}(t\p u))du}+M_{t+s}
\end{align}
for $s\in[0,T-t]$ and $x>0$, where $(M_{t+s})_{s\in[0,T-t]}$ is a continuous martingale (see \cite{PS} eq.~(25.2.63), p.~390).

The following remark will be needed in the proof of Proposition \ref{prop:finite-c} and we give it here as part of the background material.
\begin{remark}\label{rem:maturity}
Notice that in order to take into account for different maturities one should specify them in the definition of the value function, i.e.~for instance denoting $V^{(n)}(t,x;T)$, $n=0,1$, for the European/American put option with maturity $T$. However this notation is unnecessarily complex since what effectively matters in pricing put options is the time-to-maturity. In fact for fixed $x\in(0,\infty)$ and $\lambda>0$ the value at time $t\in[0,T]$ of a European/American put option with maturity $T$ is the same as the value of the option with maturity $T+\lambda$ but considered at time $t+\lambda$, i.e.~$V^{(n)}(t,x;T)=V^{(n)}(t+\lambda,x;T+\lambda)$, $n=0,1$.  In this work we mainly deal with a single maturity $T$ and simplify our notation by setting $V^{(n)}(t,x):=V^{(n)}(t,x;T)$.
\end{remark}

%%%%%%%%%%%%%%%%%%%%%%%%%%%%%%%%%%%%%%%%%%%%%%%%%%%%%%%%%%%%%%%%%%%%%
\section{Solution to the problem}\label{sec:solution}

Our first task is to rewrite problem \eqref{eq:V1bis} in a more canonical form according to the standard optimal stopping theory. For each $n\ge 2$, any $t\in\big[0,T-(n-1)\delta\big]$ and $x>0$, we define
\begin{align}\label{def:payoff-n01}
G^{(n)}(t,x):=(K-x)^++R^{(n)}(t,x)
\end{align}
where we have denoted
\begin{align}\label{def:payoff-n02}
R^{(n)}(t,x):=\EE\Big[e^{-r\delta}V^{(n-1)}(t\p \delta,X^x_\delta)\Big]
\end{align}
the expected discounted value of a swing option with $n-1$ exercise rights, available to the option holder after the refracting time $\delta$. The next result was proved in \cite[Thm.~2.1]{Car-Tou08} in a setting more general than ours and we refer the reader to that paper for its proof. One should notice that the constraint we imposed on $\mathcal{S}^n_{t,T}$ requires a trivial adjustment of the proof in \cite{Car-Tou08}.
\begin{lemma}\label{lem:reduc}
For each $n$, and any $(t,x)\in[0,T\m(n\m1)\delta]\times(0,\infty)$ the value function $V^{(n)}$ of \eqref{eq:V1bis} may be equivalently written as
\begin{align}\label{def:OS00}
V^{(n)}(t,x)=\hs{-2pc}\sup_{\hs{+2pc}0\le \tau\le T-(n-1)\delta}\EE\Big[e^{-r\tau}G^{(n)}(t\p \tau,X^x_\tau)\Big]
\end{align}
and $$\tau^*_n:=\inf\{0\le s \le T\m(n\m 1)\delta: V^{(n)}(t\p s,X_s)=G^{(n)}(t\p s,X_s)\}$$ is optimal in \eqref{def:OS00}.

Moreover, for fixed $n$ the sequence of optimal stopping times $(\tau^*_k)_{k=1,\ldots n}$ for problems \eqref{def:OS00} with value functions $V^{(k)}$, $k=1,\ldots n$ is optimal in the original formulation \eqref{eq:V1bis}.
\end{lemma}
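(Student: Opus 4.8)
The plan is to obtain the recursive (dynamic-programming) reduction by conditioning on the first exercise time and applying the strong Markov property, then to invoke the standard finite-horizon optimal stopping theory for the resulting one-step problem, and finally to get the last assertion by a backward induction that concatenates the optimal stopping times of the sub-problems.

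Fix $n\ge2$ and $(t,x)\in[0,T-(n-1)\delta]\times(0,\infty)$. The structure of $\mathcal{S}^n_{t,T}$ is nested: if $(\tau_n,\tau_{n-1},\ldots,\tau_1)\in\mathcal{S}^n_{t,T}$, then on $\{\tau_n=s\}$ the tail $(\tau_{n-1},\ldots,\tau_1)$ is admissible for the $(n-1)$-rights problem over the horizon $[s+\delta,T-(n-2)\delta]$. Splitting the sum in \eqref{eq:V1bis} into its first term and the remainder, factoring out $e^{-r(\tau_n-t)}$ and conditioning on $\cF_{\tau_n}$, a strong Markov argument applied at the stopping time $\tau_n+\delta$ together with the directed-upwards (stable under pairwise maximisation) property of the family of conditional payoffs indexed by admissible tails yields
\begin{align*}
&\operatorname*{ess\,sup}_{(\tau_{n-1},\ldots,\tau_1)}\EE\Big[\sum_{i=1}^{n-1}e^{-r(\tau_i-\tau_n)}(K-X_{\tau_i})^+\,\Big|\,\cF_{\tau_n}\Big]\\
&\qquad=\EE\Big[e^{-r\delta}V^{(n-1)}\big(\tau_n\p\delta,X_{\tau_n+\delta}\big)\,\Big|\,\cF_{\tau_n}\Big]=R^{(n)}(\tau_n,X_{\tau_n}),
\end{align*}
where the last equality uses the Markov property over the deterministic step $\delta$ and the definition \eqref{def:payoff-n02}. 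Adding the discounted immediate payoff, taking expectations and then supremum over $\tau_n$, and recalling \eqref{def:payoff-n01}, one gets
\begin{align*}
V^{(n)}(t,x)=\sup_{\tau_n}\EE\Big[e^{-r(\tau_n-t)}G^{(n)}(\tau_n,X^{t,x}_{\tau_n})\Big],
\end{align*}
the supremum being over stopping times $\tau_n$ with $t\le\tau_n\le T-(n-1)\delta$; shifting the time origin via $X^{t,x}_{t+s}=X^x_s$ then gives \eqref{def:OS00}.

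Given \eqref{def:OS00}, the optimality of $\tau^*_n$ follows from the general finite-horizon optimal stopping theory (e.g.~\cite{PS}) provided $G^{(n)}$ is continuous and of at most linear growth in $x$. The growth bound is immediate from $0\le V^{(n-1)}\le(n-1)K$, and continuity of $G^{(n)}$ follows by induction from the regularity of the American put value $V^{(1)}$, since $R^{(n)}$ is obtained by integrating $V^{(n-1)}$ against the (smooth) transition kernel of $X$ over the fixed step $\delta$. Then $V^{(n)}$ is the least superharmonic (relative to $(e^{-rt},X)$) majorant of $G^{(n)}$, the process $(e^{-rs}V^{(n)}(t\p s,X_s))_s$ is a supermartingale which is a martingale up to $\tau^*_n$, and $\tau^*_n$ is optimal. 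The only delicate point here is the terminal boundary $s=T-(n-1)\delta$, where $V^{(n)}$ collapses to $G^{(n)}$ (the remaining rights degenerate to a static basket of European puts), which is consistent with the continuity of $V^{(n-1)}$ up to its own terminal time.

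The last assertion I would prove by backward induction on $n$, the base case $n=1$ being the classical American put result recalled above. Assuming the claim for $n-1$ and taking $\tau^*_n$ as above, on $\{\tau^*_n=s,\,X_s=y\}$ one glues in the optimal sequence for the $(n-1)$-rights problem started at time $s\p\delta$ from $X_{s+\delta}$: by the strong Markov property the concatenated vector is $(\cF_u)$-adapted, lies in $\mathcal{S}^n_{t,T}$, and by the identities above realises $V^{(n)}(t,x)$. I expect the main obstacle to be the first step, namely the rigorous interchange of the two nested (essential) suprema and the identification of the inner one with $R^{(n)}$; this rests on a measurable-selection argument (equivalently, on the directed-upwards property of the relevant family together with monotone convergence) and is the technical heart of the reduction — everything else is bookkeeping with the strong Markov property plus an appeal to standard optimal stopping theory. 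This is precisely the ``trivial adjustment'' needed to transfer the proof of \cite[Thm.~2.1]{Car-Tou08} to the constrained class $\mathcal{S}^n_{t,T}$.
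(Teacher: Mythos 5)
The paper does not actually prove this lemma: it cites \cite[Thm.~2.1]{Car-Tou08} and remarks that only a trivial adjustment is needed for the constrained class $\mathcal{S}^n_{t,T}$. Your sketch correctly reconstructs the dynamic-programming/strong-Markov reduction and the Snell-envelope optimality argument underlying that reference, and correctly flags the interchange of essential supremum and conditional expectation (via the directed-upwards family and a measurable-selection argument) as the genuinely non-trivial step, so it is consistent with the approach the paper defers to.
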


The initial problem is now reduced to a problem with a single stopping time but the complexity of the multiple exercise structure has not disappeared and it has been encoded into the gain function $G^{(n)}$. Indeed it must be noted that $G^{(n)}$ depends in a non trivial recursive way, through the function $R^{(n)}$, on the value functions of the swing options with $n\m1,n\m2\ldots,1$ remaining rights.  The optimisation in \eqref{def:OS00} involves a single stopping time $\tau$ which in particular should be understood as $\tau_n$ from \eqref{eq:V1bis}.

Our aim is to characterise the sequence of optimal stopping times from Lemma \ref{lem:reduc} in terms of a sequence of optimal stopping sets whose boundaries are then analysed. For that we rely upon an iterative method: once the properties of $V^{(k)}$ and $\tau^*_{k}$ have been found, the function $G^{(k+1)}$ can be determined and we can address the study of $V^{(k+1)}$ and $\tau^*_{k+1}$. Unfortunately in our finite maturity setting there is no hope to determine explicitly how $G^{(n)}$ depends on $t$ and $x$. This makes problem \eqref{def:OS00} substantially more difficult than the standard American put option problem (in either finite or infinite horizon) and requires new methods of solution.

%%%%%%%%%%%%%%%%%%%%%%%%%%%%%%%%%%%%%%%%%%%%%%%%%%%%%%%%%%%%

\subsection{Analysis of the swing option with $n=2$}\label{sec:n=2}
%%%%%%%%%%%%%%%%%%%%%%%%%%%%%%%%%%%%%%%%%%%%%%%%%%%%%%%%%
In order to follow the idea given above of solving the problem by iteration we perform in this section a thorough analysis of problem \eqref{def:OS00} with $n=2$. Later we will generalise these results to any $n\ge 2$ by induction.

Here the main objectives are: $i)$ characterising the optimal stopping region in terms of two bounded continuous functions of time, i.e.~the optimal boundaries; $ii)$ providing an early-exercise premium (EEP) representation formula for the value function $V^{(2)}$;  $iii)$ proving that the couple of optimal boundaries is the unique solution of suitable equations.

We begin by studying fine regularity of the gain function and continuity of the value function in Section \ref{subs:VG}. Then in Section \ref{subs:CD} we prove existence and finiteness of two optimal stopping boundaries (cf.~Theorem \ref{thm:b2c2} and Proposition \ref{prop:finite-c}). We continue in Section \ref{subs:fb} by proving continuity of the boundaries and the smooth-fit property. Finally in Theorem \ref{thm:eep2} of Section \ref{subs:EEP} we provide the EEP representation of the option's value and integral equations for the optimal boundaries.

\subsubsection{Initial study of the gain function and of the value function}\label{subs:VG}
To simplify notation we set $T_\delta:=T\m\delta$, $G:=G^{(2)}$ and $R:=R^{(2)}$ (cf.~\eqref{def:payoff-n01} and \eqref{def:payoff-n02}), then for $t\in[0,T_\delta]$ and $x>0$ we have
\begin{align} \label{3.2}%\hs{+2pc}
G(t,x)=(K\m x)^+ +R(t,x)=(K\m x)^+ +e^{-r\delta} \EE V^{(1)}(t\p\delta,X^x_\delta)%\quad\text{for $(t,x)\in[0,T_{\delta}]\times(0,\infty)$}
 \end{align}
and
\begin{equation} \label{3.1}%\hs{+6pc}
V^{(2)}(t,x)=\sup \limits_{0\le\tau\le T_{\delta}-t}\EE e^{-r\tau}G(t\p\tau,X^x_\tau).%\qquad\text{for $(t,x)\in[0,T_{\delta}]\times(0,\infty)$.}
\end{equation}

In order to gain a better understanding of the properties of $G$ we first observe that $R$ may be rewritten as
\begin{align}\label{3.4}%\hs{+7pc}
R(t,x)=V^{(1)}(t,x)-rKg(t,x)\qquad\text{for $(t,x)\in[0,T_\delta]\times(0,\infty)$}
\end{align}
with
\begin{align}\label{3.5}%\hs{+7pc}
g(t,x):=\int_0^\delta{e^{-rs}\PP\big(X^x_{s}\le b^{(1)}(t\p s)\big)}ds
\end{align}
by taking expectations in \eqref{chvarV} with $s=\delta$. We also define $f:[0,T_\delta]\times(0,\infty)\to (0,\infty)$ by
\begin{align}\label{3.6}%\hs{+7pc}
f(t,x):=e^{-r\delta}\PP\big(X^x_{\delta}\le b^{(1)}(t\p\delta)\big).
\end{align}
In the next proposition we obtain important properties of $R$ which reflect the mollifying effect of the log-normal density function. The proof is collected in Appendix.
\begin{prop}\label{rem:reguC^2}
The function $R$ lies in $C^{1,2}((0,T_\delta)\times(0,\infty))$ and it solves
\begin{align}
&\hs{-1pc}\big(R_t+\L_XR-rR\big)(t,x)=-rKf(t,x) \quad \text{for $(t,x)\in(0,T_\delta)\times(0,\infty)$}\label{LR00}.
\end{align}
Moreover
\begin{align}
&\hs{-1pc}H(t,x):=(G_t\p\L_X G\m rG)(t,x)=-rK\big(I(x< K)+f(t,x)\big) \label{ltsf-3}
\end{align}
for $(t,x)\in(0,T_\delta)\times\big[(0,K)\cup(K,\infty)\big]$ and $t\mapsto H(t,x)$ is decreasing for all $x>0$ since $t\mapsto b^{(1)}(t)$ is increasing.
\end{prop}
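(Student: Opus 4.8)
The plan is to prove Proposition \ref{rem:reguC^2} in two stages: first establish the regularity and PDE for $R$, then deduce the corresponding statement for $G=(K-x)^++R$ by exploiting the well-known structure of the American put.

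\textbf{Step 1: Regularity of $R$.} Recall $R(t,x)=e^{-r\delta}\EE V^{(1)}(t\p\delta,X^x_\delta)$. Since $V^{(1)}$ is only $C^{1,2}$ in the continuation set $C^{(1)}$ and merely Lipschitz across the boundary $b^{(1)}$, the smoothness of $R$ must come from the convolution with the log-normal transition density $p_\delta(x,y)$ of the geometric Brownian motion. I would write $R(t,x)=e^{-r\delta}\int_0^\infty V^{(1)}(t\p\delta,y)\,p_\delta(x,y)\,dy$ and note that $y\mapsto p_\delta(x,y)$ and $(t,x)\mapsto p_\delta(x,y)$ are smooth, with all $x$- and $t$-derivatives having suitable integrability against the bounded function $V^{(1)}$ (recall $0\le V^{(1)}\le K$). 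Hence differentiation under the integral sign is justified and gives $R\in C^{1,2}((0,T_\delta)\times(0,\infty))$; moreover $R_t,R_x,R_{xx}$ are each continuous. The cleanest way to get the PDE \eqref{LR00} is to bypass direct computation: use the change-of-variable representation \eqref{chvarV} of $V^{(1)}$ and take expectations with $s=\delta$ to obtain \eqref{3.4}, i.e.\ $R(t,x)=V^{(1)}(t,x)-rK\,g(t,x)$ with $g$ as in \eqref{3.5}. Then, since $V^{(1)}$ satisfies $V^{(1)}_t+\L_X V^{(1)}-rV^{(1)}=0$ on $C^{(1)}$ and $=-rK\cdot 0$ only formally on $D^{(1)}$, I instead apply the operator directly to the smooth function $R$ using the Markovian/parabolic structure: for a bounded measurable $\varphi$, the function $u(t,x):=\EE[\varphi(X^x_{\delta})\,]$ solves the backward heat-type equation $u_t+\L_X u=0$ on $(0,\delta)\times(0,\infty)$; applying this with the $t$-shift and the discount factor to $R(t,x)=e^{-r\delta}\EE V^{(1)}(t\p\delta,X^x_\delta)$ and using that $V^{(1)}(t\p\delta,\cdot)$ is a fixed bounded function while the remaining $t$-dependence enters only through $b^{(1)}$, a short computation (differentiating $g$ and using $\partial_s\PP(X^x_s\le b^{(1)}(t\p s))$ together with the forward Kolmogorov equation) yields $(R_t+\L_X R-rR)(t,x)=-rK f(t,x)$ with $f$ as in \eqref{3.6}.

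\textbf{Step 2: The operator on $G$.} Now $G(t,x)=(K-x)^++R(t,x)$. Away from the strike, i.e.\ for $x\in(0,K)\cup(K,\infty)$, the function $x\mapsto(K-x)^+$ is smooth (linear on each piece), so $G$ inherits $C^{1,2}$-regularity there from $R$. A direct computation gives $\L_X(K-x)^+ = rx\,\partial_x(K-x)^+ + \tfrac12\sigma^2x^2\,\partial_{xx}(K-x)^+$, which equals $-rx$ for $x<K$ and $0$ for $x>K$, i.e.\ $\L_X(K-x)^+ = -rx\,I(x<K)$; also $\partial_t(K-x)^+=0$ and $-r(K-x)^+=-r(K-x)I(x<K)$. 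Summing, $\big(\partial_t+\L_X-r\big)(K-x)^+ = -rx\,I(x<K)-r(K-x)I(x<K) = -rK\,I(x<K)$. Adding \eqref{LR00} gives exactly \eqref{ltsf-3}: $H(t,x)=-rK\big(I(x<K)+f(t,x)\big)$ for $(t,x)\in(0,T_\delta)\times[(0,K)\cup(K,\infty)]$.

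\textbf{Step 3: Monotonicity in $t$.} It remains to show $t\mapsto H(t,x)$ is decreasing for every $x>0$. Since $I(x<K)$ does not depend on $t$, this reduces to showing $t\mapsto f(t,x)=e^{-r\delta}\PP(X^x_\delta\le b^{(1)}(t\p\delta))$ is increasing, so that $-rK f(t,x)$ is decreasing. This is immediate from the stated monotonicity of the American put boundary: $t\mapsto b^{(1)}(t)$ is increasing (well known, e.g.\ \cite{PS}), hence $t\mapsto\{X^x_\delta\le b^{(1)}(t\p\delta)\}$ is an increasing family of events and its probability is nondecreasing in $t$.

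\textbf{Main obstacle.} The only genuinely delicate point is Step 1 — rigorously justifying $C^{1,2}$-regularity of $R$ and, above all, obtaining the precise right-hand side $-rK f(t,x)$ in \eqref{LR00}. One must handle the fact that $V^{(1)}$ is not $C^{1,2}$ globally, so naive application of It\^o/Dynkin is not allowed; the argument either (a) goes through the change-of-variable formula \eqref{chvarV}, reducing $R$ to $V^{(1)}-rKg$ and then carefully differentiating the time-integral $g$ using smoothness of the log-normal density and the regularity of $b^{(1)}$ (this seems the route the authors intend, given that \eqref{3.4}, \eqref{3.5}, \eqref{3.6} are set up just before the proposition), or (b) works directly with the heat-kernel representation and differentiates under the integral. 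I would carry out route (a): differentiate $g(t,x)=\int_0^\delta e^{-rs}\PP(X^x_s\le b^{(1)}(t\p s))\,ds$ in $t$ and $x$, observe $g\in C^{1,2}$ because the density of $X^x_s$ is smooth in $(s,x)$ for $s>0$ and $b^{(1)}$ is continuous and bounded away from $0$, compute $(g_t+\L_X g - rg)$ using the Kolmogorov backward equation for the density and a boundary-term analysis at $s=0$ and $s=\delta$ that produces precisely $f(t,x)$, and conclude via \eqref{3.4} together with the validity of the American put PDE inside $C^{(1)}$ plus the identity $V^{(1)}(t,x)=(K-x)^+$ on $D^{(1)}$. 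The bookkeeping of these boundary terms (and checking the limit $s\downarrow 0$ where the density degenerates to $\delta_x$) is the technical heart of the argument, and is exactly why the authors defer this proof to the Appendix.
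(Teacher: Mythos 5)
Your Steps 2 and 3 are correct and match the paper: once \eqref{LR00} is in hand, the computation $\big(\partial_t+\L_X-r\big)(K-x)^+=-rK\,I(x<K)$ for $x\ne K$ and the monotonicity of $t\mapsto f(t,x)$ (from $b^{(1)}$ increasing) are exactly what the paper does. The substance, however, is Step 1, and there you identify the right obstacle but propose a different and incomplete route from the paper's. You suggest differentiating $g(t,x)=\int_0^\delta e^{-rs}\PP(X^x_s\le b^{(1)}(t\p s))\,ds$ directly and combining with the American-put PDE for $V^{(1)}$ via \eqref{3.4}. This runs into two genuine difficulties you gesture at but do not resolve: (i) $V^{(1)}$ is not $C^{1,2}$ across $b^{(1)}$ (only $C^{1}$ by smooth fit), so $(\partial_t+\L_X-r)V^{(1)}$ is not classically defined on the boundary curve and you cannot simply add the pieces $V^{(1)}$ and $-rKg$; and (ii) the integrand defining $g$ degenerates as $s\downarrow 0$ (the transition density concentrates at $x$ and the probability jumps to an indicator), so differentiating under the integral sign for $g_x, g_{xx}$ is not immediate. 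The cancellation that makes the sum $R=V^{(1)}-rKg$ smooth even though neither summand is smooth across $b^{(1)}$ is the whole point, and the boundary-term bookkeeping you defer to is precisely what one must avoid.

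The paper takes a cleaner route that never touches $g$'s derivatives. It first proves, using \emph{only} the change-of-variable representation \eqref{chvarV} and the Markov property, that
\begin{align*}
Y_s:=e^{-rs}R(t\p s,X_s)-rK\int_0^s e^{-ru}f(t\p u,X_u)\,du
\end{align*}
is a continuous martingale; the computation is a two-step application of the tower property and a change of variables in the time integral, with no differentiation. Then on an arbitrary bounded rectangle $\cD\subset(0,T_\delta)\times(0,\infty)$, the Cauchy--Dirichlet problem $u_t+\L_X u-ru=-rKf$ on $\cD$ with $u=R$ on the parabolic boundary $\partial_P\cD$ has a unique classical solution $u_\cD\in C^{1,2}(\cD)\cap C(\overline{\cD})$ (since $f$ and $R$ are continuous and bounded). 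Applying Dynkin's formula to $u_\cD$ and using the martingale property of $Y$ gives $u_\cD=R$ on $\overline{\cD}$. Arbitrariness of $\cD$ yields both $R\in C^{1,2}$ and \eqref{LR00} simultaneously. This Feynman--Kac verification argument is the key idea you are missing: it converts the hard analytic question (regularity and PDE for $R$) into an easy probabilistic one (a martingale property), and sidesteps entirely the non-smoothness of $V^{(1)}$ and the singular behaviour of $g$ at $s=0$.
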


An application of It\^o-Tanaka formula, \eqref{ltsf-3} and standard localisation arguments to remove the martingale term, give a useful representation of the expectation in \eqref{3.1}, i.e.
\begin{align} \label{3.8}
\EE e^{-r\tau} G(t\p \tau,X^x_\tau)=\;&G(t,x) +\EE\int_0^\tau e^{-ru}H(t\p u,X^x_u)du+\frac{1}{2}\EE\int_0^\tau e^{-ru}dL^K_u (X^x)
 \end{align}
for $(t,x)\in[0,T_\delta]\times(0,\infty)$ and any stopping time $\tau\in[0, T_\delta-t]$. Here $\big(L^K_u(X^x)\big)_{u\ge 0}$ is the local time process of $X^x$ at level $K$ and we have used that $H(t\p u,X^x_u)I(X^x_u\neq K)=H(t\p u,X^x_u)$ $\PP$-a.s.~for all $u\in[0,T_\delta-t]$.

\begin{remark}\label{rem:prop.1}
Proposition \eqref{rem:reguC^2} and the representation \eqref{3.8} are the starting point of our analysis of an optimal stopping rule. For $\delta>0$ the function $f$ is strictly negative in the whole state space. Hence $H(t,x)<0$ for all $(t,x)$ and the first integral in \eqref{3.8} may be seen as a running cost incurred by the option holder at all times for delaying the exercise of the option. The only incentive to wait comes from the integral with respect to the local time which increases whenever the process $X$ crosses the strike price $K$. So we can heuristically argue at this point that the option holder should exercise the option if the underlying price is ``too far'' from the strike price, and in particular even if the put part of the payoff is out-of-the-money.

We notice that for $\delta>0$ the process $(e^{-rt}R(t,X_t))_{t\ge0}$ is a strict supermartingale due to \eqref{LR00}. For $\delta=0$ instead one has $f(t,x)=I(x\le b^{(1)}(t))$ so that $(e^{-rt}R(t,X_t))_{t\ge0}$ behaves as a martingale for as long as $X$ stays above $b^{(1)}$. This observation in conjunction with \eqref{ltsf-3} and \eqref{3.8} implies that in absence of a refracting time the option holder does not incur a cost of waiting when the price is above the strike $K$, hence there is no incentive to exercise if the put part of the option is out-of-the-money. These considerations will be further expanded in Remark \ref{rem:optexn.1} below once a more rigorous analysis of the problem has been carried out.
\end{remark}

The continuation and stopping sets of problem \eqref{3.1} are given respectively by
\begin{align}  %\hs{4pc}
\label{3.9}&C^{(2)}:= \{\, (t,x)\in[0,T_{\delta})\! \times\! (0,\infty):V^{(2)}(t,x)>G(t,x)\, \} \\[3pt]
\label{3.10}&D^{(2)}:= \{\, (t,x)\in[0,T_{\delta}]\! \times\! (0,\infty):V^{(2)}(t,x)=G(t,x)\, \}.
\end{align}
Lemma \ref{lem:reduc} provides an optimal stopping time for \eqref{3.1} as
\begin{align} \label{3.11} %\hs{7pc}
\tau^*=\inf\ \{\ 0\leq s\leq T_{\delta}\m t:(t\p s,X^x_{s})\in D^{(2)}\ \}.
\end{align}
This can also be seen by standard arguments. In fact let $\tau:=\overline{\tau}\wedge(T_\delta-t)$ and $\overline{\tau}$ be arbitrary but fixed stopping time. Since the gain function $G$ is continuous on $[0,T_\delta]\times(0,\infty)$, dominated convergence theorem easily implies that $(t,x)\mapsto \EE e^{-r\tau}G(t+\tau,X^{x}_\tau)$ is continuous as well due to \eqref{eq:GBM}. Then $V^{(2)}$ must be at least lower semi-continuous as supremum of continuous functions and the standard theory of optimal stopping (cf.~for instance \cite[Corollary 2.9, Sec.~2]{PS}) confirms that \eqref{3.11} is the smallest optimal stopping time in \eqref{3.1}.

We can now begin our analysis of the value function $V^{(2)}$ by proving its continuity.
\begin{prop}\label{prop:contV}
The value function $V^{(2)}$ of \eqref{3.1} is continuous on $[0,T_{\delta}]\times(0,\infty)$. Moreover $x\mapsto V^{(2)}(t,x)$ is convex and  Lipschitz continuous with constant $L>0$ independent of $t\in[0,T_\delta]$.
\end{prop}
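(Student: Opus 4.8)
The plan is to prove continuity of $V^{(2)}$ on $[0,T_\delta]\times(0,\infty)$ by first establishing convexity and Lipschitz continuity in the space variable (which immediately gives continuity in $x$, uniformly in $t$), and then separately handling continuity in the time variable; the joint continuity follows by combining the two. The key structural fact I would exploit is the representation \eqref{3.1} of $V^{(2)}$ as a supremum over stopping times $\tau\in[0,T_\delta-t]$ of $\EE e^{-r\tau}G(t+\tau,X^x_\tau)$, together with the explicit geometric Brownian motion dynamics \eqref{eq:GBM} and the regularity of $G$ inherited from \eqref{3.2}, \eqref{3.4}: recall $G(t,x)=(K-x)^+ + V^{(1)}(t,x) - rKg(t,x)$, where $V^{(1)}$ is convex and Lipschitz in $x$ with $|V^{(1)}_x|\le 1$, and where $g$ is smooth by Proposition \ref{rem:reguC^2}.

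\emph{Step 1: convexity in $x$.} Since $X^x_s = x\,e^{\sigma B_s+(r-\frac12\sigma^2)s}$ is linear in $x$ for each fixed path, and $y\mapsto G(t,y)$ is convex in $y$ — because $(K-x)^+$ is convex, $V^{(1)}(t,\cdot)$ is convex, and one checks that $x\mapsto g(t,x)$ is convex (this needs a short argument: either differentiate \eqref{3.5} twice using the log-normal density, or observe via \eqref{3.4} and \eqref{LR00} that $R(t,\cdot)$ is convex because $V^{(1)}$ is and the source term is controlled, or simply note $R(t,x)=\EE[e^{-r\delta}V^{(1)}(t+\delta,X^x_\delta)]$ is convex as an average of convex functions of a linear map of $x$) — it follows that for each stopping time $\tau$ the map $x\mapsto \EE e^{-r\tau}G(t+\tau,X^x_\tau)$ is convex, hence $V^{(2)}(t,\cdot)$ is convex as a supremum of convex functions. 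Actually the cleanest route is: $R(t,x)=\EE[e^{-r\delta}V^{(1)}(t+\delta,X^x_\delta)]$ and $(K-x)^+$ are both convex in $x$, so $G(t,\cdot)$ is convex, and then convexity of $V^{(2)}(t,\cdot)$ is automatic.

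\emph{Step 2: Lipschitz continuity in $x$, uniformly in $t$.} For $x,y>0$ and any admissible $\tau$, write
\[
\big|\EE e^{-r\tau}G(t+\tau,X^x_\tau) - \EE e^{-r\tau}G(t+\tau,X^y_\tau)\big|\le \EE\big[e^{-r\tau}|G(t+\tau,X^x_\tau)-G(t+\tau,X^y_\tau)|\big].
\]
Using $|(K-a)^+-(K-b)^+|\le|a-b|$, $|V^{(1)}(t,a)-V^{(1)}(t,b)|\le|a-b|$, and a Lipschitz bound $|g(t,a)-g(t,b)|\le C|a-b|$ (obtained from $|g_x|\le C$, which follows from Proposition \ref{rem:reguC^2} and the boundedness of $b^{(1)}$ on $[0,T]$), we get $|G(t+\tau,X^x_\tau)-G(t+\tau,X^y_\tau)|\le L_0\,|X^x_\tau - X^y_\tau| = L_0\,|x-y|\,e^{\sigma B_\tau+(r-\frac12\sigma^2)\tau}$; taking $e^{-r\tau}$ inside and using that $(e^{-rs}X^x_s)_{s\ge0}$ is a martingale with $\EE[e^{-r\tau}X^x_\tau]\le x$ (optional sampling on the bounded stopping time $\tau\le T_\delta$), one bounds the right-hand side by $L_0\,|x-y|\cdot\EE[e^{-r\tau}e^{\sigma B_\tau+(r-\frac12\sigma^2)\tau}]\le L_0 e^{rT}|x-y|$. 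Taking the supremum over $\tau$ and then interchanging yields $|V^{(2)}(t,x)-V^{(2)}(t,y)|\le L|x-y|$ with $L=L_0 e^{rT}$ independent of $t$.

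\emph{Step 3: continuity in $t$ and conclusion.} Fix $x$. For $t<t'$ in $[0,T_\delta]$ one shows $V^{(2)}(t',x)\le V^{(2)}(t,x)$ is \emph{not} automatic here (unlike the American put) because $G$ depends on $t$; instead I would use the standard estimate that for any $\tau$ optimal (or $\eps$-optimal) for $V^{(2)}(t,x)$, the stopping time $\tau\wedge(T_\delta-t')$ is admissible for $V^{(2)}(t',x)$, and control the difference using \eqref{3.8}: the discrepancy is governed by $\EE\int_{\tau\wedge(T_\delta-t')}^{\tau}e^{-ru}|H|\,du$ plus a local-time term, both of which vanish as $t'\downarrow t$ since $H$ is locally bounded (by \eqref{ltsf-3}, $|H|\le rK(1+1)$) and the local time of GBM accumulated on a shrinking time interval goes to $0$ in $L^1$. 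The reverse inequality is handled symmetrically, together with the time-regularity of $G$ itself (continuity of $(t,x)\mapsto G(t,x)$, which holds since $V^{(1)}$ and $g$ are continuous). Combining with Step 2, for $(t,x)$ and $(t',x')$ close we split $|V^{(2)}(t,x)-V^{(2)}(t',x')|\le |V^{(2)}(t,x)-V^{(2)}(t,x')| + |V^{(2)}(t,x')-V^{(2)}(t',x')|\le L|x-x'| + (\text{time term})$, giving joint continuity.

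\emph{Main obstacle.} The genuinely delicate point is the time-continuity in Step 3: because the horizon $T_\delta-t$ shrinks with $t$ and $G$ carries its own $t$-dependence, one cannot invoke the usual monotone-in-time argument for American options. The representation \eqref{3.8} is the right tool — it converts the problem into estimating $\EE\int e^{-ru}H\,du$ over a small time window and an integral against the local time $L^K(X^x)$ — but making the local-time term small uniformly for $x$ in compacts requires the "fine estimates on the local time of geometric Brownian motion" the authors allude to in the introduction. Everything else (convexity, $x$-Lipschitz) is routine given Proposition \ref{rem:reguC^2} and the known American put facts.
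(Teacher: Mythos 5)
Your Steps 1 and 2 (convexity and $x$-Lipschitz continuity) are essentially what the paper does, though you take a small detour: rather than Lipschitz bounds on $g$ the paper works directly with $R(t,x)=\EE[e^{-r\delta}V^{(1)}(t+\delta,X^x_\delta)]$ and uses $|V^{(1)}_x|\le 1$ plus $\EE e^{-r\delta}X^1_\delta=1$ to get $|G(t,x_1)-G(t,x_2)|\le 2|x_1-x_2|$; then $\EE e^{-r\tau}X^1_\tau=1$ by optional sampling (equality, not $\le$) for the bounded stopping time $\tau$, so the constant $L=2$ is obtained with no loose factor $e^{rT}$. These are cosmetic differences.

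The genuine divergence is in Step 3, and there is one error of substance. You assert that monotonicity $V^{(2)}(t',x)\le V^{(2)}(t,x)$ for $t<t'$ is ``not automatic here because $G$ depends on $t$.'' In fact it \emph{is} automatic: $t\mapsto V^{(1)}(t,x)$ is decreasing, hence from \eqref{3.2} $t\mapsto G(t,x)$ is decreasing, and since the admissible set of stopping times also shrinks as $t$ increases, $t\mapsto V^{(2)}(t,x)$ is decreasing by direct comparison of the two suprema. The paper uses exactly this to reduce the task to bounding the one-sided difference $V^{(2)}(t_1,x)-V^{(2)}(t_2,x)\ge 0$.

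For that one-sided bound your proposed route through \eqref{3.8} would work but is heavier than necessary: you would have to control a local-time increment $\EE\int_{\tau_2}^{\tau_1}e^{-ru}dL^K_u(X^x)$ and the $H$-integrals, whereas the paper sidesteps the local time altogether. It uses the decomposition $G=(K-x)^+ + R$, the pointwise inequality $(K-x)^+-(K-y)^+\le (y-x)^+$ for the put part, and the smooth representation of $R$ coming from \eqref{LR00} (the PDE in Proposition \ref{rem:reguC^2}) for the $R$ part, arriving at an estimate that tends to $0$ by continuity of $V^{(1)}$, $b^{(1)}$ and $f$. Both approaches can be made rigorous, but the paper's avoids the ``fine local-time estimates'' you worry about (those are indeed needed elsewhere, e.g.\ in Theorem \ref{thm:b2c2}, but not here). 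If you pursue your route you must still correct the monotonicity claim and supply the vanishing of the local-time term on the shrinking window; as written your Step 3 is a sketch with a false premise, not a proof.
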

\begin{proof}
\emph{Step 1}. It follows from convexity of $x\mapsto V^{(1)}(t,x)$ and \eqref{3.2} that the map $x\mapsto G(t,x)$ is convex on $(0,\infty)$ for every $t\in[0,T_\delta]$ fixed. Now if we take any $t\in[0,T_\delta]$, $0<x<y$ and $\alpha\in(0,1)$ we have that
\begin{align*}
 \alpha V^{(2)}(t,x) + (1\m\alpha)V^{(2)}(t,y)&\ge   \sup_{0\le\tau\le T_\delta-t}\EE e^{-r\tau}\big[\alpha G(t\p\tau,X^{x}_\tau)+(1\m\alpha)G(t\p\tau,X^{y}_\tau)\big]\\[+3pt]
&\ge \sup_{0\le\tau\le T_\delta-t}\EE e^{-r\tau}G\left(t\p\tau,X^{\alpha x+(1-\alpha)y}_\tau\right)\\
&=V^{(2)}(t,\alpha x+(1\m\alpha)y)
\end{align*}
where we used the convexity of $G$ in $x$ and $\alpha X^x_\tau+(1-\alpha)X^y_\tau=X^{\alpha x+(1-\alpha)y}_\tau$. Hence the function $x\mapsto V^{(2)}(t,x)$ is convex on $(0,\infty)$ as well and therefore $x\mapsto V^{(2)}(t,x)$ is continuous on $(0,\infty)$ for every given and fixed $t\in [0,T_\delta]$.

Notice that $x\mapsto G(t,x)$ is also decreasing and Lipschitz continuous, uniformly with respect to $t\in[0,T_\delta]$. Indeed, since $-1 \le V^{(1)}_x\le0$ and $x\mapsto (K-x)^+$ is Lipschitz, we obtain for $t\in[0,T_\delta]$ and $0<x_1<x_2<\infty$
\begin{align}\label{lipG00}
0\le G(t,x_1)-G(t,x_2)&\le |x_2-x_1|+e^{-r\delta}\EE\big|X^{x_2}_\delta-X^{x_1}_\delta\big|\\[+3pt]
&=\big(x_2-x_1\big)\big(1+\EE e^{-r\delta}X^1_\delta\big)
=2\big(x_2-x_1\big).\nonumber
\end{align}
It then follows from \eqref{eq:GBM}, \eqref{lipG00} and the optional sampling theorem that
\begin{align*}%\label{lipV00}
0\le V^{(2)}(t,x_1)-V^{(2)}(t,x_2)&\le \sup_{0\le\tau\le T_\delta-t}\EE e^{-r\tau}\big[G(t\p\tau,X^{x_1}_\tau)-G(t\p\tau,X^{x_2}_\tau)\big]\\[+3pt]
&\le2(x_2-x_1)\sup_{0\le\tau\le T_\delta-t}\EE e^{-r\tau}X^{1}_\tau=2(x_2-x_1)\nonumber
\end{align*}
for $t\in[0,T_\delta]$ and $0<x_1<x_2<\infty$. Hence $x\mapsto V^{(2)}(t,x)$ is Lipschitz continuous with constant $L\in(0,2]$, uniformly with respect to time.
\vs{6pt}

\emph{Step 2}. It remains to prove that $t\mapsto V^{(2)}(t,x)$ is continuous on $[0,T_\delta]$ for $x\in(0,\infty)$. We first notice that for fixed $x>0$ the map $t\mapsto G(t,x)$ is decreasing since $t\mapsto V^{(1)}(t,x)$ is such and therefore $t\mapsto V^{(2)}(t,x)$ is decreasing as well by simple comparison. Take $0\le t_1<t_2\le T_\delta$ and $x\in(0,\infty)$, let $\tau_1=\tau^* (t_1,x)$ be optimal for $V^{(2)}(t_1,x)$ and set $\tau_2:=\tau_1 \wedge (T_\delta\m t_2)$. Then using \eqref{LR00}, the fact that $\tau_1\ge\tau_2$ $\PP$-a.s.~and the inequality $(K\m x)^+ -(K\m y)^+ \le (y\m x)^+$ for $x,y \in \R$, we find
\begin{align} \label{3.12} \hs{2pc}
0\le\; & V^{(2)}(t_1,x)-V^{(2)}(t_2,x)\\[+3pt]
\le\; &\EE e^{-r\tau_1} G(t_1\p\tau_1,X^x_{\tau_1})-\EE e^{-r\tau_2} G(t_2\p\tau_2,X^x_{\tau_2})\nonumber\\[+3pt]
\le\;& \EE e^{-r\tau_1}(X^{x}_{\tau_2}\m X^{x}_{\tau_1})^++\EE\Big[e^{-r\tau_1}R(t_1\p\tau_1,X^x_{\tau_1})-e^{-r\tau_1}R(t_2\p\tau_2,X^x_{\tau_2})\Big]\nonumber\\[+3pt]
\le\;&\EE e^{-r\tau_1}(X^{x}_{\tau_2}\m X^{x}_{\tau_1})^++R(t_1,x)-R(t_2,x)\nonumber\\
&-rK\EE\int^{\tau_2}_0e^{-rs}\big[f(t_1\p s,X^x_s)-f(t_2\p s,X^x_s)\big]ds.\nonumber
\end{align}
Taking now $t_2-t_1\to0$ one has that the first term of the last expression in \eqref{3.12} goes to zero by standard arguments (see e.g.~formulae~ (25.2.12)--(25.2.14), p.381 of \cite{PS}), the second one goes to zero by continuity of $V^{(1)}$ and $b^{(1)}$ and the third term goes to zero by dominated convergence and continuity of $f$.
\vs{+4pt}

Continuity of $V^{(2)}$ on $[0,T_\delta]\times(0,\infty)$ now follows by combining step 1 and step 2 above.
\end{proof}

%%%%%%%%%%%%%%%%%%%%%%%%%%%%%%%%%%%%%%%%%%%%%%%%%%%%%%%%%%%%%%%%
\subsubsection{Geometry of continuation and stopping sets}\label{subs:CD}

Notice that since $V^{(2)}$ and $G$ are continuous then $C^{(2)}$ is an open set and $D^{(2)}$ is a closed set (cf.~\eqref{3.9} and \eqref{3.10}). In the next proposition we obtain an initial insight on the structure of the set $D^{(2)}$ in terms of the set $D^{(1)}$ (cf.~\eqref{2.7b}).
\begin{prop}\label{prop:D2D1}
The restriction to $[0,T_\delta]$ of the stopping set $D^{(1)}$ of problem \eqref{2.6} is contained in the stopping set $D^{(2)}$  of problem \eqref{3.1}, i.e.
\begin{align}%\hs{+5pc}
D^{(1)}\cap\big([0,T_\delta]\times(0,\infty)\big)\subseteq D^{(2)}.
\end{align}
\end{prop}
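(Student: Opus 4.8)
The goal is to show that if $(t,x) \in D^{(1)}$ with $t \in [0,T_\delta]$, then $(t,x) \in D^{(2)}$, i.e. $V^{(2)}(t,x) = G(t,x)$. Since the reverse inequality $V^{(2)} \ge G$ always holds (the immediate-stopping payoff is a candidate in the supremum \eqref{3.1}), it suffices to prove $V^{(2)}(t,x) \le G(t,x)$ whenever $(t,x)$ lies in the American-put stopping region. The plan is to exploit the representation \eqref{3.8}: for any admissible stopping time $\tau \in [0,T_\delta - t]$,
\begin{align*}
\EE e^{-r\tau} G(t\p\tau, X^x_\tau) = G(t,x) + \EE\int_0^\tau e^{-ru} H(t\p u, X^x_u)\,du + \tfrac12 \EE\int_0^\tau e^{-ru}\,dL^K_u(X^x).
\end{align*}
So the claim reduces to showing that the starting point being in $D^{(1)}$ forces the drift-plus-local-time contribution to be nonpositive.

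The key observation is that $(t,x) \in D^{(1)}$ means $x \le b^{(1)}(t)$, and since $b^{(1)}(t) < K$ for $t \in [0,T)$, we have $x < K$; moreover, because $t \mapsto b^{(1)}(t)$ is increasing, the process $(t\p u, X^x_u)$ started in $D^{(1)}$ will remain in $D^{(1)}$ — and in particular below $K$ — up until the first time it exits. But here is the subtlety: the relevant stopping region for the problem with time horizon $T_\delta$ is the American put region with maturity $T$, so $D^{(1)}$ restricted to $[0,T_\delta]$ is indeed the lower set $\{x \le b^{(1)}(t)\}$, and one should check that $X^x$ cannot jump "over" $K$ while leaving this set since it is continuous. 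On the set $\{x < K\}$, formula \eqref{ltsf-3} gives $H(t,x) = -rK(1 + f(t,x)) < 0$, so the drift integral is strictly negative. The local-time term is nonnegative, so I need to control it against the drift. The clean way: as long as $X^x_u \le b^{(1)}(t\p u) < K$, the local time $L^K$ does not increase, so if we are willing to stop at the exit time $\sigma$ of $D^{(1)}$ (or rather argue pathwise), the local-time contribution up to $\sigma$ vanishes. Then for $\tau \le \sigma$ the expression is $\le G(t,x)$; and for the general optimal $\tau^*$, I would argue that it is never worse to stop at $\tau^* \wedge \sigma$ — or, more directly, invoke that $\tau^* = \sigma$ cannot be improved, which is exactly the statement $(t,x) \in D^{(2)}$.

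A more robust route, avoiding case analysis of whether $\tau^*$ exceeds $\sigma$: I would instead compare $V^{(2)}$ directly. By the recursive structure, $V^{(2)}(t,x) = \sup_\tau \EE[e^{-r\tau}((K\m X^x_\tau)^+ + R(t\p\tau,X^x_\tau))]$, and using $R = V^{(1)} - rKg$ from \eqref{3.4}, together with the fact that stopping the American-put part immediately is optimal at $(t,x)\in D^{(1)}$, one should be able to dominate the $V^{(1)}$-component by $V^{(1)}(t,x) = (K-x)^+$ and the remaining terms by their values at $\tau = 0$. Concretely: $\EE[e^{-r\tau}(K\m X^x_\tau)^+] \le V^{(1)}(t,x) = (K-x)^+$, and via \eqref{chvarV} the discounted $V^{(1)}$ plus the $g$-correction recombine so that the whole supremand is bounded by $G(t,x)$ precisely because $H \le 0$ on $\{x<K\}$ and the trajectory stays there. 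The main obstacle I anticipate is the handling of the local-time term and making rigorous the claim that, starting from $D^{(1)}$, the process stays in $\{x < K\}$ long enough — i.e. confirming that the first exit from $D^{(1)}$ (using monotonicity of $b^{(1)}$) occurs strictly below level $K$ by continuity of paths, so that $L^K$ contributes nothing before that time. Once that geometric fact is nailed down, the inequality $V^{(2)}(t,x) \le G(t,x)$ follows cleanly from \eqref{3.8} applied to $\tau \wedge \sigma$ and optimality of $\sigma$ in that regime.
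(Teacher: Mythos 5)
Your proposal has a genuine gap, and the obstacle you yourself flag at the end (``confirming that the first exit from $D^{(1)}$ occurs strictly below level $K$ by continuity of paths, so that $L^K$ contributes nothing before that time'') is precisely where the argument cannot be closed. Working from \eqref{3.8} with the full gain $G$, the local time $L^K$ indeed vanishes while $X$ stays below $K$, but the optimal stopping time $\tau^*$ for $V^{(2)}$ has no reason to be dominated by the exit time $\sigma$ of $D^{(1)}$ (or of $\{x<K\}$); once $X$ reaches $K$ the local-time term can in fact dominate the drift term (this is exactly Step 1 of the proof of Theorem \ref{thm:b2c2}, which shows the strip around $K$ lies in $C^{(2)}$). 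Your attempted patch ``argue that it is never worse to stop at $\tau^*\wedge\sigma$'' is circular: that \emph{is} the assertion $(t,x)\in D^{(2)}$ you are trying to establish. Your ``more robust route'' starts in the right direction (split $G=(K-\cdot)^+ +R$ and dominate the put piece by $V^{(1)}$), but then you again retreat to ``$H\le 0$ on $\{x<K\}$ and the trajectory stays there,'' so you never actually escape the trajectory-confinement issue.

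The paper's proof removes the local-time difficulty entirely and needs no pathwise confinement. It fixes $\tau=\tau^*(t,x)$ optimal for $V^{(2)}(t,x)$, writes $\EE e^{-r\tau}G(t+\tau,X^x_\tau)=\EE e^{-r\tau}(K-X^x_\tau)^+ + \EE e^{-r\tau}R(t+\tau,X^x_\tau)$, and then bounds the two pieces separately: the first by $V^{(1)}(t,x)$ (supremum over $\tau$), and the second via Proposition \ref{rem:reguC^2}, which says $R\in C^{1,2}$ and solves $R_t+\L_X R-rR=-rKf$. Because $R$ is smooth there is \emph{no} local-time term when applying It\^o to $R$, and since $f\ge 0$ one gets $\EE e^{-r\tau}R(t+\tau,X^x_\tau)=R(t,x)-rK\EE\int_0^\tau e^{-rs}f(t+s,X^x_s)\,ds\le R(t,x)=G(t,x)-(K-x)^+$. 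Hence $V^{(2)}(t,x)-V^{(1)}(t,x)\le G(t,x)-(K-x)^+$, and evaluating at $(t,x)\in D^{(1)}$, where $V^{(1)}(t,x)=(K-x)^+$, gives $V^{(2)}(t,x)\le G(t,x)$, hence equality. This is valid for \emph{any} $\tau$ and requires nothing about the trajectory staying below $K$. The key insight you are missing is that the kink (and therefore the local time) lives only in the $(K-\cdot)^+$ piece, which is absorbed into the $V^{(1)}$-bound, while the remainder $R$ is smooth; your appeals to \eqref{3.8} and to trajectory confinement are a red herring.
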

\begin{proof}
Take any point $(t,x)\in [0,T_{\delta}]\times(0,\infty)$ and let $\tau=\tau^*(t,x)$ denote the optimal stopping time  for $V^{(2)}(t,x)$, then by using \eqref{3.2}, \eqref{LR00} and recalling that $f\ge 0$ we have
\begin{align*} %\label{3.13}%\hs{-2pc}
V^{(2)}(t,x)-V^{(1)}(t,x)&\le \EE e^{-r\tau}G(t\p\tau,X^x_\tau)-\EE e^{-r\tau}(K-X^x_{\tau})^+\\
&=\EE e^{-r\tau}R(t\p\tau,X^x_\tau)\nonumber\\
&=R(t,x)-rK\EE\int_0^\tau{e^{-rs}f(t\p s,X^x_s)ds}\le G(t,x)-(K\m x)^+.\nonumber
\end{align*}
It then follows that for any $(t,x)\in D^{(1)}$ with $t\in[0,T_\delta]$, i.e.~such that $V^{(1)}(t,x)=(K\m x)^+$, it must be $V^{(2)}(t,x)=G(t,x)$, hence $(t,x)\in D^{(2)}$.
\end{proof}

We now define the $t$-sections of the continuation and stopping sets of problem \eqref{3.1} by
\begin{align}%\hs{+5pc}
\label{def:t-secC}
C^{(2)}_t&:=\{\, x\in (0,\infty):V^{(2)}(t,x)>G(t,x)\, \}\\[+3pt]
\label{def:t-secD}
D^{(2)}_t&:=\{\, x\in (0,\infty):V^{(2)}(t,x)=G(t,x)\, \}
\end{align}
for $t\in[0,T_\delta]$ and prove the following
\begin{prop}\label{prop:incrCD}
For any $0\le t_1<t_2\le T_\delta$ one has $C^{(2)}_{t_2}\subseteq C^{(2)}_{t_1}$ (equivalently $D^{(2)}_{t_2}\supseteq D^{(2)}_{t_1}$), i.e.~the family $\{ C^{(2)}_t\,,\,t\in[0,T_\delta]\}$ is decreasing in $t$ (equivalently the family $\{ D^{(2)}_t\,,\,t\in[0,T_\delta]\}$ is increasing in $t$).
\end{prop}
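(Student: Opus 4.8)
The plan is to establish the time-monotonicity of the continuation sets by exploiting the representation \eqref{3.8} together with the fact that the integrand $H$ is decreasing in time (Proposition \ref{rem:reguC^2}) and that the local time of $X^x$ at level $K$ does not depend on the time variable. Concretely, fix $0\le t_1<t_2\le T_\delta$ and $x\in(0,\infty)$, suppose $x\in C^{(2)}_{t_2}$, and let $\tau^*=\tau^*(t_2,x)$ denote the optimal stopping time for $V^{(2)}(t_2,x)$; set $\tau:=\tau^*\wedge(T_\delta-t_1)$, which equals $\tau^*$ since $T_\delta-t_1>T_\delta-t_2\ge\tau^*$. The idea is to use $\tau$ as an admissible (but generally suboptimal) stopping time for the problem started at $t_1$ and compare the two values.

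First I would write, using \eqref{3.8} at $(t_1,x)$ with the stopping time $\tau$,
\begin{align*}
V^{(2)}(t_1,x)-G(t_1,x)\ge \EE\int_0^\tau e^{-ru}H(t_1\p u,X^x_u)\,du+\tfrac{1}{2}\EE\int_0^\tau e^{-ru}dL^K_u(X^x),
\end{align*}
while for $(t_2,x)$ the optimality of $\tau^*=\tau$ gives equality:
\begin{align*}
V^{(2)}(t_2,x)-G(t_2,x)= \EE\int_0^\tau e^{-ru}H(t_2\p u,X^x_u)\,du+\tfrac{1}{2}\EE\int_0^\tau e^{-ru}dL^K_u(X^x).
\end{align*}
Subtracting, the local-time contributions cancel exactly (the local time of $X^x$ at $K$ is the same process in both problems, only the running time horizon of the candidate stopping time differs, and here the stopping time is literally the same), and we are left with
\begin{align*}
\big(V^{(2)}-G\big)(t_1,x)-\big(V^{(2)}-G\big)(t_2,x)\ge \EE\int_0^\tau e^{-ru}\big[H(t_1\p u,X^x_u)-H(t_2\p u,X^x_u)\big]\,du\ge 0,
\end{align*}
where the last inequality uses that $u\mapsto t_1+u<t_2+u$ and $t\mapsto H(t,y)$ is decreasing for every $y>0$, by Proposition \ref{rem:reguC^2}. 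Since $x\in C^{(2)}_{t_2}$ means $(V^{(2)}-G)(t_2,x)>0$, we conclude $(V^{(2)}-G)(t_1,x)>0$, i.e.\ $x\in C^{(2)}_{t_1}$. This proves $C^{(2)}_{t_2}\subseteq C^{(2)}_{t_1}$, and passing to complements in $(0,\infty)$ gives $D^{(2)}_{t_1}\subseteq D^{(2)}_{t_2}$.

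The main obstacle is making the local-time cancellation rigorous and justifying that \eqref{3.8} may be applied with a stopping time that is itself optimal for a different starting time; one must check that $\tau=\tau^*(t_2,x)$ is indeed an admissible stopping time for the $t_1$-problem (which holds because $\tau\le T_\delta-t_2\le T_\delta-t_1$) and that all the expectations are finite, which follows from the boundedness/Lipschitz estimates on $G$ and $V^{(2)}$ in Proposition \ref{prop:contV} together with the integrability of $L^K(X^x)$ over a bounded horizon. A cleaner alternative, avoiding $\tau^*$ altogether, is to use the inequality $V^{(2)}(t,x)-G(t,x)=\sup_\tau \EE\big[\int_0^\tau e^{-ru}H(t+u,X^x_u)du+\tfrac12\int_0^\tau e^{-ru}dL^K_u(X^x)\big]$ directly: the supremand at $t_1$ dominates the supremand at $t_2$ pointwise in $\tau$ (because $H$ is decreasing in $t$ and the local-time term is $t$-independent), so the supremum at $t_1$ dominates that at $t_2$; then $(V^{(2)}-G)(t_2,x)>0$ forces $(V^{(2)}-G)(t_1,x)>0$. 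I would likely present this second, supremum-based argument as the proof since it sidesteps the technicalities of manipulating the specific optimal stopping time, and only needs the monotonicity of $H$ in $t$ from Proposition \ref{rem:reguC^2} and the elementary observation that the common horizon $T_\delta-t$ shrinks as $t$ grows so that admissibility is preserved.
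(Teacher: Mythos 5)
Your proposal is correct and uses essentially the same idea as the paper's proof: compare the two problems using the $t_2$-optimal stopping time as a suboptimal candidate for the $t_1$-problem, and exploit the time-monotonicity of the instantaneous reward coming from the monotonicity of $b^{(1)}$. The paper phrases the cancellation differently — it peels off the time-independent put part by passing to $R$, applies the martingale identity $\EE e^{-r\tau}R(t\p\tau,X^x_\tau)=R(t,x)-rK\EE\int_0^\tau e^{-rs}f(t\p s,X^x_s)\,ds$, and then uses that $f$ is increasing in $t$ — whereas you keep the full payoff $G$, use the It\^o--Tanaka representation \eqref{3.8} and cancel the local-time term, then invoke that $H$ is decreasing in $t$. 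Since $H(t,x)=-rK\bigl(I(x<K)+f(t,x)\bigr)$, these two monotonicity facts are identical, so this is a presentational difference, not a different method. Both variants you give (the one via the optimal $\tau^*(t_2,x)$ and the cleaner supremum-based one) are rigorous; the admissibility observation $T_\delta-t_2\le T_\delta-t_1$ that you flag is indeed the only point worth checking, and it is exactly what makes the comparison go through.
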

\begin{proof}
Fix $0\le t_1<t_2 <T_\delta$ and $x\in (0,\infty)$, and set $\tau=\tau^*(t_2,x)$ optimal for $V^{(2)}(t_2,x)$. Then we have
\begin{align} \label{3.14}%\hs{-2pc}
V^{(2)}(&t_1,x)-V^{(2)}(t_2,x)\\
\ge\;& \EE e^{-r\tau}G(t_1\p\tau,X^x_\tau)-\EE e^{-r\tau}G(t_2\p\tau,X^x_\tau)=\EE e^{-r\tau}\big(R(t_1\p\tau,X^x_\tau)-R(t_2\p\tau,X^x_\tau)\big)\nonumber\\
=\;&R(t_1,x)-R(t_2,x)-rK\EE\int_0^\tau{e^{-rs}\big[f(t_1\p s,X^x_s)-f(t_2\p s,X^x_s)\big]ds}\nonumber\\
\ge\; &R(t_1,x)-R(t_2,x)=G(t_1,x)-G(t_2,x)\nonumber
\end{align}
where in the last inequality we used that $t\mapsto f(t,x)$ is increasing on $[0,T_\delta]$ by monotonicity of $b^{(1)}$ on $[0,T]$. It follows from \eqref{3.14} that $(t_2,x)\in C^{(2)}$ implies $(t_1,x)\in C^{(2)}$ and the proof is complete.
\end{proof}
\vs{6pt}

So far the analysis of the swing option has produced results which are somehow similar to those found in the standard American put option problem. In what follows instead we will establish that the structure of $C^{(2)}$ is radically different from the one of $C^{(1)}$ (cf.~\eqref{2.7a}) due to the coexistence of two optimal exercise boundaries.
In the rest of the paper we will require the next simple result, whose proof is omitted as it can be obtained by an application of It\^o-Tanaka formula, optional sampling theorem and observing that the process $X$ has independent increments.
\begin{lemma}\label{lem:loctime}
For any $\sigma\le\tau$ stopping times in $[0,T_\delta]$ one has
\begin{align}%\hs{-2pc}
\EE&\Big[\int_{\sigma}^{\tau}{e^{-rt}dL^K_t(X^x)}\Big|\cF_\sigma\Big]\\
&=\EE\Big[e^{-r\tau}\big|X^x_\tau-K\big|\Big|\cF_\sigma\Big]
-e^{-r\sigma}\big|X^x_\sigma-K\big|-rK\EE\Big[\int^{\tau}_{\sigma}e^{-rt} \textrm{sign}(X^x_t -K)dt \Big|\cF_\sigma\Big].\nonumber
\end{align}
\end{lemma}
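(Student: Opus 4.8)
The plan is to apply the It\^o--Tanaka formula to the function $(t,x)\mapsto e^{-rt}|x-K|$ evaluated along the process $X^x$. Recall that $x\mapsto|x-K|$ is convex with generalised second derivative $2\,\delta_K(dx)$ (a Dirac mass at $K$), and its left/right derivative equals $\mathrm{sign}(x-K)$ off the point $K$. First I would write, for $\sigma\le t\le\tau$ (both valued in $[0,T_\delta]$),
\begin{align*}
e^{-rt}|X^x_t-K| = e^{-r\sigma}|X^x_\sigma-K| &+\int_\sigma^t e^{-ru}\,\mathrm{sign}(X^x_u-K)\,dX^x_u\\
&-r\int_\sigma^t e^{-ru}|X^x_u-K|\,du +\tfrac12\int_\sigma^t e^{-ru}\,dL^K_u(X^x).
\end{align*}
Here $L^K(X^x)$ is the (semimartingale) local time of $X^x$ at level $K$; the factor $\tfrac12$ is the standard normalisation consistent with the occupation-time formula, and it matches the appearance of $\tfrac12$ in \eqref{3.8}. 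Substituting the dynamics $dX^x_u=rX^x_u\,du+\sigma X^x_u\,dB_u$ from \eqref{2.1} into the stochastic integral, the drift contribution is $r\int_\sigma^t e^{-ru}\,\mathrm{sign}(X^x_u-K)X^x_u\,du$, and combining it with the term $-r\int_\sigma^t e^{-ru}|X^x_u-K|\,du$ gives
\[
r\int_\sigma^t e^{-ru}\big(\mathrm{sign}(X^x_u-K)X^x_u - |X^x_u-K|\big)du = rK\int_\sigma^t e^{-ru}\,\mathrm{sign}(X^x_u-K)\,du,
\]
using the elementary identity $\mathrm{sign}(y-K)\,y-|y-K|=K\,\mathrm{sign}(y-K)$. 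The remaining stochastic term is $\sigma\int_\sigma^t e^{-ru}\,\mathrm{sign}(X^x_u-K)X^x_u\,dB_u$, a continuous local martingale whose integrand is bounded on $[\sigma,\tau]\subseteq[0,T_\delta]$ (since $\mathrm{sign}$ is bounded and $X^x$ has finite moments of all orders by \eqref{eq:GBM}), hence it is a true martingale with mean zero.

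Next I would set $t=\tau$, rearrange to isolate $\tfrac12\int_\sigma^\tau e^{-ru}\,dL^K_u(X^x)$ on the left, and take conditional expectation given $\cF_\sigma$. The stochastic-integral term vanishes in $\cF_\sigma$-conditional expectation because $X$ has independent increments (equivalently, by the optional sampling theorem applied to the bounded-integrand martingale, stopped at $\tau\le T_\delta$). This yields
\[
\tfrac12\,\EE\!\Big[\int_\sigma^\tau e^{-ru}\,dL^K_u(X^x)\,\Big|\,\cF_\sigma\Big]
= \EE\big[e^{-r\tau}|X^x_\tau-K|\,\big|\,\cF_\sigma\big] - e^{-r\sigma}|X^x_\sigma-K|
- rK\,\EE\!\Big[\int_\sigma^\tau e^{-ru}\,\mathrm{sign}(X^x_u-K)\,du\,\Big|\,\cF_\sigma\Big].
\]
Multiplying through by $2$ gives exactly the claimed identity. (Note: the statement as printed omits the factor $2$; the proof shows the intended normalisation clearly, and one simply reads the local time with the convention that absorbs it, or carries the $2$ explicitly — I would insert the $2$ to be safe.)

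The only point requiring care — and the main obstacle, such as it is — is the justification that the local-martingale part is a genuine martingale so that optional sampling applies at the bounded stopping times $\sigma,\tau$: this follows because on $[0,T_\delta]$ the integrand $\sigma e^{-ru}\,\mathrm{sign}(X^x_u-K)X^x_u$ is in $L^2(\Omega\times[0,T_\delta])$ by \eqref{eq:GBM}, so Doob's inequality and the usual localisation-plus-uniform-integrability argument close the gap. Everything else is the standard It\^o--Tanaka computation together with the algebraic identity $\mathrm{sign}(y-K)y-|y-K|=K\,\mathrm{sign}(y-K)$, so I would present it concisely rather than in full detail, exactly as the authors indicate.
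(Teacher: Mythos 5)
Your proof follows exactly the route the paper indicates (It\^o--Tanaka, the algebraic identity $\mathrm{sign}(y-K)\,y-|y-K|=K\,\mathrm{sign}(y-K)$, and optional sampling justified by moment bounds on $X$). However, there is a substantive error in your application of It\^o--Tanaka, and it causes you to wrongly conclude that the Lemma has a missing factor of $2$.

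The Tanaka formula for $|X-K|$ reads
\begin{align*}
|X_t-K|=|X_0-K|+\int_0^t \mathrm{sign}(X_s-K)\,dX_s+L^K_t(X),
\end{align*}
with \emph{no} factor $\tfrac12$ in front of the local time. You correctly noted that the generalised second derivative of $x\mapsto|x-K|$ is $2\,\delta_K$; but the It\^o--Tanaka formula contributes a factor $\tfrac12$ in front of $\int L^a_t\,f''(da)$, so the two factors cancel, giving $L^K_t$, not $\tfrac12 L^K_t$. The $\tfrac12$ you imported from \eqref{3.8} is legitimate there, but for a different reason: the kink in the gain function comes from $(K-x)^+$, whose distributional second derivative is $\delta_K$ (not $2\delta_K$), hence $(K-X_t)^+ = (K-X_0)^+ - \int I(X_s<K)\,dX_s + \tfrac12 L^K_t$. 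Applying It\^o--Tanaka to $|X-K|$ with the erroneous $\tfrac12$ coefficient is precisely what produced the spurious factor of $2$ in your final display. The Lemma as stated is correct and is consistent with the paper's own computation in \eqref{3.17}, which uses the identical normalisation and has no $\tfrac12$. If you replace $\tfrac12\int_\sigma^t e^{-ru}\,dL^K_u(X^x)$ by $\int_\sigma^t e^{-ru}\,dL^K_u(X^x)$ in your first display, the rest of your argument goes through verbatim and yields exactly the claimed identity, so the issue is purely the normalisation of the local time and the unwarranted suggestion that the published statement contains a typo.
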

Now we characterise the structure of the continuation region $C^{(2)}$.
\begin{theorem}\label{thm:b2c2}
There exist two functions $b^{(2)},\,c^{(2)}:[0,T_\delta]\to(0,\infty]$ such that $0< b^{(2)}(t)<K<c^{(2)}(t)\le\infty$ and $C^{(2)}_t=(b^{(2)}(t),c^{(2)}(t))$ for all $t\in[0,T_\delta]$. Moreover $b^{(2)}(t)\ge b^{(1)}(t)$ for all $t\in[0,T_\delta]$, $t\mapsto b^{(2)}(t)$ is increasing and $t\mapsto c^{(2)}(t)$ is decreasing on $[0,T_\delta]$ with
\begin{align}\label{eq:limbdry}%\hs{+9pc}
\lim_{t\uparrow T_\delta}b^{(2)}(t)=\lim_{t\uparrow T_\delta}c^{(2)}(t)=K.
\end{align}
\end{theorem}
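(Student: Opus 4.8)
The plan is to exploit the sign structure of the functional $H$ together with the local time representation \eqref{3.8}, and to leverage the monotonicity of the $t$-sections proved in Proposition \ref{prop:incrCD}. First I would establish connectedness of each section $C^{(2)}_t$ in $x$. By Proposition \ref{prop:contV} the map $x\mapsto V^{(2)}(t,x)$ is convex, and $x\mapsto G(t,x)=(K\m x)^+ +R(t,x)$ is convex as well (by convexity of $V^{(1)}$ through \eqref{3.4}--\eqref{3.5}, or directly from \eqref{3.2}); a standard argument then shows that $C^{(2)}_t=\{x:V^{(2)}>G\}$ is an interval, say $(b^{(2)}(t),c^{(2)}(t))$ with $0\le b^{(2)}(t)\le c^{(2)}(t)\le\infty$. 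To see $C^{(2)}_t\neq\emptyset$ for $t<T_\delta$ one evaluates \eqref{3.8} at $x=K$: there $H(t\p u,X^x_u)=-rK\big(I(X^x_u<K)+f(t\p u,X^x_u)\big)$ and, crucially, the local time term $\tfrac12\EE\int_0^\tau e^{-ru}dL^K_u(X^x)$ is strictly positive for any non-trivial $\tau$ and, by the fine estimates on the local time of geometric Brownian motion hinted at in the introduction, dominates the negative drift term for $\tau$ small, so that choosing $\tau$ a small deterministic time gives $\EE e^{-r\tau}G(t\p\tau,X^K_\tau)>G(t,K)$. Hence $K\in C^{(2)}_t$, which forces $b^{(2)}(t)<K<c^{(2)}(t)$.

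Next I would show $b^{(2)}(t)\ge b^{(1)}(t)$, which is immediate from Proposition \ref{prop:D2D1}: if $x\le b^{(1)}(t)$ and $t\in[0,T_\delta]$ then $(t,x)\in D^{(1)}\subseteq D^{(2)}$, so $x\notin C^{(2)}_t$, i.e.\ $x\le b^{(2)}(t)$. That $b^{(2)}(t)>0$ then follows since $b^{(1)}(t)>0$. For the monotonicity of the boundaries, the key point is Proposition \ref{prop:incrCD}: $t\mapsto C^{(2)}_t$ is decreasing, so for $t_1<t_2$ one has $(b^{(2)}(t_2),c^{(2)}(t_2))=C^{(2)}_{t_2}\subseteq C^{(2)}_{t_1}=(b^{(2)}(t_1),c^{(2)}(t_1))$, which gives $b^{(2)}(t_1)\le b^{(2)}(t_2)$ and $c^{(2)}(t_1)\ge c^{(2)}(t_2)$; hence $t\mapsto b^{(2)}(t)$ is increasing and $t\mapsto c^{(2)}(t)$ is decreasing on $[0,T_\delta]$. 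Finiteness of $c^{(2)}$ is deliberately left to Proposition \ref{prop:finite-c} (it is not claimed in this theorem), so here I only need $c^{(2)}(t)\le\infty$.

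The remaining — and most delicate — part is the terminal behaviour \eqref{eq:limbdry}. For $b^{(2)}$: since $b^{(1)}(t)\le b^{(2)}(t)<K$ and $b^{(1)}(t)\uparrow K$ as $t\uparrow T$ (standard American put fact), and $b^{(1)}$ is evaluated at the same maturity, I should argue $\lim_{t\uparrow T_\delta}b^{(1)}(t)$ is some value strictly below $K$; this is not quite enough, so the cleaner route is to bound $b^{(2)}$ from above. The idea is: near $t=T_\delta$ the time left to exercise the second right shrinks to zero, so the extra value over the put payoff, $V^{(2)}-G$, collapses; by \eqref{3.8} with $\tau=T_\delta-t$ one has $V^{(2)}(t,x)-G(t,x)\le \tfrac12\EE\int_0^{T_\delta-t}e^{-ru}dL^K_u(X^x)$ (the $H$-term is $\le 0$), and by Lemma \ref{lem:loctime} this is $O(\sqrt{T_\delta-t})$ uniformly on compacts bounded away from $0$. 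Meanwhile for $x<K$ fixed one can lower-bound $V^{(2)}(t,x)-G(t,x)$ below... no — more simply: to show $b^{(2)}(t)\to K$ I fix $x<K$ and show $x\in D^{(2)}_t$ for $t$ close to $T_\delta$. This follows because for such $x$ the local time at level $K$ collected before $T_\delta-t$ by $X^x$ is negligible (of order $\exp(-c/(T_\delta-t))$ by the transition density bound away from $K$), while the always-negative $H$-term makes $\EE e^{-r\tau}G(t\p\tau,X^x_\tau)-G(t,x)\le \tfrac12\EE\int_0^\tau e^{-ru}dL^K_u(X^x)<$ any prescribed gain; hence stopping immediately is optimal, $x\in D^{(2)}_t$, so $b^{(2)}(t)\ge x$ eventually, and letting $x\uparrow K$ gives $\liminf b^{(2)}\ge K$, combined with $b^{(2)}<K$ this yields $b^{(2)}(t)\uparrow K$. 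The symmetric argument for $x>K$ fixed gives $x\in D^{(2)}_t$ for $t$ near $T_\delta$, hence $c^{(2)}(t)\le x$ eventually and $c^{(2)}(t)\downarrow K$. The main obstacle will be making the local-time estimates uniform and sharp enough near the corner $(T_\delta,K)$, i.e.\ controlling $\EE\int_0^{T_\delta-t}e^{-ru}dL^K_u(X^x)$ both from above (for the boundary limits, where it must be small) and from below at $x=K$ (for nonemptiness, where it must beat the negative drift); this is precisely where the "fine estimates on the local time of the geometric Brownian motion" advertised in the introduction do the work.
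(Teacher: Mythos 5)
The most consequential problem is your claim that convexity of $x\mapsto V^{(2)}(t,x)$ and $x\mapsto G(t,x)$ already implies that $C^{(2)}_t=\{x:V^{(2)}(t,x)>G(t,x)\}$ is an interval. This is false in general: the difference of two convex functions need not be convex, so $\{V^{(2)}-G>0\}$ could a priori be a disjoint union of several intervals. (Take $f(x)=x^2$, $g(x)=(2|x|-1)^+$, both convex with $f\ge g$; then $\{f>g\}$ is $\R\setminus\{-1,0,1\}$.) There is no ``standard argument'' that does what you claim here, and indeed this is exactly the part of the theorem that requires real work. The paper establishes connectedness of $C^{(2)}_t$ by a path argument: using Proposition~\ref{prop:incrCD}, if $(t,x)\in D^{(2)}$ with $x>K$ then the whole horizontal segment $[t,T_\delta]\times\{x\}$ lies in $D^{(2)}$, so a trajectory started from some $y>x$ that is supposed to reach the stopping set optimally cannot cross level $K$ first without hitting $D^{(2)}$; consequently no local time at $K$ is collected before $\tau^*$, and \eqref{3.8} yields $V^{(2)}(t,y)-G(t,y)\le -rK\,\EE\!\int_0^{\tau^*}e^{-rs}f(t\p s,X^y_s)\,ds\le 0$, forcing $(t,y)\in D^{(2)}$. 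The symmetric argument handles $x<K$, and together with $K\in C^{(2)}_t$ this pins down $C^{(2)}_t=(b^{(2)}(t),c^{(2)}(t))$. Your proposal omits this entirely and therefore never proves the single-interval structure that makes the two boundaries well defined.

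Two lesser remarks. In your Step~1 argument showing $K\in C^{(2)}_t$, you take a small \emph{deterministic} time while the paper localises with $\tau_\eps\wedge s$ ($\tau_\eps$ the exit time of a small symmetric interval around $K$); either can work, but you also need to invoke Proposition~\ref{prop:incrCD} to propagate the strict inequality from $t$ near $T_\delta$ back to all $t\in[0,T_\delta)$, a step you leave implicit. For the terminal limits your strategy (fix $x\neq K$, show the local time at $K$ decays super-polynomially in $T_\delta-t$ while the strictly negative $H$-term is of order $T_\delta-t$, hence $x\in D^{(2)}_t$ eventually) is the same one the paper points to via its step 2, but note that for $x<K$ you need the contribution $-rK\,I(X<K)$ in $H$ to stay bounded away from zero on the relevant event, which your sketch uses but does not verify; this is manageable but should not be waved through, since it is precisely what is \emph{not} available in the case $x>K$ (there only the $f$-part of $H$ works) and the paper's estimate \eqref{eq:DK02}--\eqref{eq:DK03} is correspondingly more careful. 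Overall the monotonicity claims, $b^{(2)}\ge b^{(1)}$, and $b^{(2)}>0$ are handled correctly, but the proof as proposed does not establish the central structural claim of the theorem.
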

\begin{proof}
The proof of existence is provided in 3 steps.
\vs{+6pt}

\emph{Step 1}. First we show that it is not optimal to stop at $x=K$. To accomplish that we use arguments inspired by \cite{Vill07}. Fix $\eps>0$, set
$\tau_\eps=\inf \{ u\ge 0: X^K_u \notin(K\m \eps,K\p\eps)\}$, take $t\in[0,T_\delta]$ and denote $s=T_\delta\m t$ then by \eqref{ltsf-3} and \eqref{3.8} we have that
\begin{align} \label{3.16} %\hs{2pc}
V^{(2)}(&t,K)-G(t,K)\\
\ge\;& \EE e^{-r\tau_{\eps}\wedge s}G(t\p\tau_{\eps}\wedge s,X^K_{\tau_{\eps}\wedge s})-G(t,K)\nonumber\\
=\;&\frac{1}{2}\EE\int_0^{{\tau_{\eps}\wedge s}} e^{-ru}dL^K_u (X^K)
-rK\EE\int_0^{{\tau_{\eps}\wedge s}} e^{-ru}\big(I(X^K_u \le K)+f(t\p u,X^K_u)\big)du\nonumber\\
\ge\;&\frac{1}{2}\EE\int_0^{{\tau_{\eps}\wedge s}} e^{-ru}dL^K_u (X^K)
-C_1 \EE(\tau_{\eps}\wedge s)\nonumber
\end{align}
for some constant $C_1 >0$. The integral involving the local time can be estimated by using It\^o-Tanaka's formula as follows
\begin{align} \label{3.17} %\hs{2pc}
\EE\int_0^{{\tau_{\eps}\wedge s}} &e^{-ru}dL^K_u (X^K)\\
=\;&\EE e^{-r(\tau_\eps\wedge s)} |X^K_{\tau_{\eps}\wedge s}-K|-
r K\,\EE\int_0^{{\tau_{\eps}\wedge s}} e^{-ru} \text{sign}(X^K_u -K) du\nonumber\\
\ge\;&\EE e^{-r(\tau_\eps\wedge s)} |X^K_{\tau_{\eps}\wedge s}-K|-C_2\,\EE(\tau_{\eps}\wedge s)\nonumber
\end{align}
with $C_2=rK$. Since $|X^K_{\tau_{\eps}\wedge s}-K|\le \eps$ it is not hard to see that for any $0<p<1$ we have
\begin{align*}
e^{-r(\tau_\eps\wedge s)}|X^K_{\tau_{\eps}\wedge s}-K|\ge e^{-rp(\tau_\eps\wedge s)}\frac{|X^K_{\tau_{\eps}\wedge s}-K|^p}{\eps^p}e^{-r(\tau_\eps\wedge s)}|X^K_{\tau_{\eps}\wedge s}-K|
\end{align*}
then by taking the expectation and using the integral version of \eqref{2.1} we get
\begin{align*} %\label{3.18a} %\hs{2pc}
\EE e^{-r(\tau_\eps\wedge s)} |X^K_{\tau_{\eps}\wedge s}-K|\ge\;&
\frac{1}{\eps^p}\EE\big|e^{-r\tau_{\eps}\wedge s} (X^K_{\tau_{\eps}\wedge s}-K)\big|^{1+p}\\
=\;&\frac{1}{\eps^p}\EE\Big|rK\int_0^{{\tau_{\eps}\wedge s}}e^{-ru}du+\sigma\int_0^{{\tau_{\eps}\wedge s}} e^{-ru}X^K_u dB_u\Big|^{1+p}.\nonumber
\end{align*}
We now use the standard inequality $|a+b|^{p+1}\ge \tfrac{1}{2^{p+1}}|a|^{p+1}-|b|^{p+1}$ for any $a,b\in \R$ (see.~e.g.~Ex.~5 in \cite[Ch.~8,~Sec.~50,~p.~83]{KF}) and Burkholder-Davis-Gundy (BDG) inequa\-li\-ty (see e.g.~\cite[p.~63]{PS}) to obtain
\begin{align}\label{3.18}%\hs{-3pc}
\EE e^{-r\tau_{\eps}\wedge s} |X^K_{\tau_{\eps}\wedge s}-K|\ge\;&\frac{1}{\eps^p 2^{p+1}}\EE\Big|\sigma\int_0^{{\tau_{\eps}\wedge s}} e^{-ru}X^K_u dB_u\Big|^{1+p}
-\frac{1}{\eps^p}\EE\Big|rK\int_0^{{\tau_{\eps}\wedge s}}e^{-ru}du\Big|^{1+p}\\
\ge\;&C_4\, \EE\Big|\sigma^2 \int_0^{{\tau_{\eps}\wedge s}} e^{-2ru}(X^K_u)^2 du\Big|^{(1+p)/2}
-C_3 \EE ({\tau_{\eps}\wedge s})^{1+p}\nonumber\\[+3pt]
\ge\;&C_4\, C_5\, \EE ({\tau_{\eps}\wedge s})^{(1+p)/2}
-C_3 \EE ({\tau_{\eps}\wedge s})^{1+p}\nonumber
\end{align}
for some constants $C_3 =C_3 (\eps,p)$, $C_4=C_4 (\eps,p)$, $C_5=C_5 (\eps.p)>0$. Since we are interested in the limit as $T_\delta-t\to0$ we take $s<1$, and combining \eqref{3.16}, \eqref{3.17} and \eqref{3.18} we get
\begin{align} \label{3.19} %\hs{1pc}
V^{(2)}(t,K)-&G(t,K)\ge C_4\, C_5\, \EE ({\tau_{\eps}\wedge s})^{(1+p)/2}
-(C_1 \p C_2\p C_3) \EE(\tau_{\eps}\wedge s)%\nonumber
\end{align}
for any $t\in[0,T_\delta)$ such that $s=T_\delta-t<1$.  Since $p\p 1<2$ it follows from \eqref{3.19} by letting $s\downarrow 0$ that there exists $t^*<T_\delta$ such that $V^{(2)}(t,K)>G(t,K)$ for all $t\in(t^*,T_\delta)$. Therefore $(t,K)\in C^{(2)}_t$ for all $t\in(t^*,T_\delta)$ and since $t\mapsto C^{(2)}_t$ is decreasing (cf.~Proposition \ref{prop:incrCD}) this implies $(t,K)\in C^{(2)}_t$ for all $t\in[0,T_\delta)$, i.e.~it is never optimal to stop when the underlying price $X$ equals the strike $K$.
\vs{6pt}

\emph{Step 2}. Now we study the portion of $D^{(2)}$ above the strike $K$ and show that it is not empty. For that we argue by contradiction and we assume that there are no points in the stopping region above $K$. Take $\eps>0$, $x\ge K+2\eps$ and $t\in[0,T_\delta)$ and we denote $\tau=\tau^*(t,x)$ the optimal stopping time for $V^{(2)}(t,x)$. As before we set $s=T_\delta-t$ to simplify notation and define $\sigma_\eps:=\inf\{u\ge0\,:\,X^x_{u}\le K\p\eps\}\wedge T_\delta$. Then by \eqref{ltsf-3} and \eqref{3.8} we get
\begin{align*} %\label{3.20} %\hs{-1pc}
V^{(2)}&(t,x)-G(t,x) \nonumber\\
=&\EE e^{-r\tau}G(t\p\tau,X^x_\tau)-G(t,x)\nonumber\\
\le&-rK\EE\int_0^\tau e^{-ru}f(t\p u,X^x_u)du+\frac{1}{2}\EE\int_0^\tau e^{-ru}dL^K_u (X^x)\nonumber\\
\le&-rK\EE\Big[ I(\tau<s)\int_0^\tau e^{-ru}f(t\p u,X^x_u)du\Big]-rK\EE\Big[ I(\tau=s)\int_0^s e^{-ru}f(t\p u,X^x_u)du\Big]\nonumber\\
&+\frac{1}{2}\EE\Big[I(\sigma_\eps<\tau)\int_{\sigma_\eps}^\tau e^{-ru}dL^K_u (X^x) \Big]\nonumber\\
=&-rK\EE\Big[\int_0^s e^{-ru}f(t\p u,X^x_u)du\Big]+rK\EE\Big[ I(\tau<s)\int_\tau^s e^{-ru}f(t\p u,X^x_u)du\Big]\nonumber\\
&+\frac{1}{2}\EE\Big[I(\sigma_\eps<\tau)\int_{\sigma_\eps}^\tau e^{-ru}dL^K_u (X^x) \Big]\nonumber%\\
\end{align*}
where we have used the fact that for $u\le\sigma_\eps$ the local time $L^K_u(X^x)$ is zero. Since we are assuming that it is never optimal to stop above $K$ then it must be $\big\{\tau<s\big\}\subset\big\{\sigma_\eps<s\big\}$. Obviously we also have $\big\{\sigma_\eps<\tau\big\}\subset\big\{\sigma_\eps<s\big\}$ and hence
\begin{align}\label{eq:DK00}
V^{(2)}&(t,x)-G(t,x)\\
\le&-rK\EE\Big[\int_0^s e^{-ru}f(t\p u,X^x_u)du\Big]\nonumber\\
&+\EE\Big[ I(\sigma_\eps<s)\Big(rK\int_\tau^s e^{-ru}f(t\p u,X^x_u)du+\frac{1}{2}\int_{\sigma_\eps}^s e^{-ru}dL^K_u (X^x)\Big) \Big]\nonumber\\
\le & -rK\EE\Big[\int_0^s e^{-ru}f(t\p u,X^x_u)du\Big]\nonumber\\
&+rK s \PP(\sigma_\eps<s)+\frac{1}{2}\EE\left[ I(\sigma_\eps<s)\EE\Big(\int_{\sigma_\eps}^{\sigma_\eps\vee s} e^{-ru}dL^K_u (X^x)\Big|\cF_{\sigma_\eps}\Big)\, \right]\nonumber%\\
\end{align}
where we have used $0\le f\le 1$ (cf.\ \eqref{3.6}) and the fact that $I(\sigma_\eps<s\big)$ is $\cF_{\sigma_\eps}$-measurable. From Lemma \ref{lem:loctime} with $\sigma=\sigma_\eps$ and $\tau=\sigma_\eps\vee s$ and by the martingale property of $(e^{-rt} X^x_t)_{t\ge 0}$ we get
\begin{align}\label{eq:DK00b}%\hs{-4pc}
\EE\Big[\int_{\sigma_\eps}^{\sigma_\eps\vee s}& e^{-ru}dL^K_u (X^x)\Big|\cF_{\sigma_\eps}\Big]\\
\le& 2K+\EE\big[e^{-r(\sigma_\eps\vee s)}X^x_{\sigma_\eps\vee s}\big|\cF_{\sigma_\eps}\big]-e^{-r\sigma_\eps}X^x_{\sigma_\eps}
+rK\EE\big[I(\sigma_\eps<s)\int^s_{\sigma_\eps}{e^{-rt}dt}\big]
\le3K.\nonumber
\end{align}
Combining \eqref{eq:DK00} and \eqref{eq:DK00b}
we finally obtain
\begin{align}\label{eq:DK01}\hs{-2pc}
V^{(2)}&(t,x)\m G(t,x)\le-rK\EE\Big[\int_0^s\hs{-4pt} e^{-ru}f(t\p u,X^x_u)du\Big]\p \left(\tfrac{3}{2}K\p rKs\right)\PP(\sigma_\eps<s).
\end{align}
To estimate $\PP\big(\sigma_\eps<s\big)$ it is convenient to set $\alpha:=\ln\left(\frac{x}{K+\eps}\right)$, $Y_t:=\sigma B_t+(r-\sigma^2/2)t$ and $Z_t:=-\sigma B_t+c\, t$ with $c:=r+\sigma^2/2$. Notice that $Y_t\ge -Z_t$ for $t\in[0,T_\delta]$ and hence
\begin{align}\label{eq:prob01}%\hs{-3pc}
\PP(\sigma_\eps<s)=&\PP\Big(\inf_{0\le u \le s}X^x_u\le K\p\eps\Big)=\PP\Big(\inf_{0\le u \le s}Y_u\le -\alpha\Big)\\[+3pt]
\le &\PP\Big(\inf_{0\le u \le s}-Z_u\le -\alpha\Big)=\PP\Big(\sup_{0\le u \le s}Z_u\ge \alpha\Big)
\le \PP\Big(\sup_{0\le u \le s}\big|Z_u\big|\ge \alpha\Big)\nonumber
\end{align}
where we also recall that $x\ge K\p 2\eps$ and hence $\alpha>0$. We now use Markov inequality, Doob's inequality and BDG inequality to estimate the last expression in \eqref{eq:prob01} and it follows that for any $p>1$
\begin{align}\label{eq:prob02}%\hs{-3pc}
\PP\Big(\sup_{0\le u \le s}\big|Z_u\big|\ge \alpha\Big)\le &\frac{1}{\alpha^p}\EE\sup_{0\le u\le s}\big|Z_u\big|^p
\le\frac{2^{p-1}}{\alpha^p} \Big(c s^p+\sigma^p\EE\sup_{0\le u\le s}\big|B_u\big|^p \Big)\le C_1\big(s^p+s^{p/2}\big)
\end{align}
with suitable $C_1=C_1(p,\eps,x)>0$. Collecting \eqref{eq:DK01} and \eqref{eq:prob02} we get
\begin{align}\label{eq:DK02}%\hs{-4pc}
V^{(2)}(t,x)\m G(t,x)\hs{-1pt}\le\hs{-1pt} s\Big(\hs{-1pt}C_2(s^{ p}\p s^{ p/2}) \p C_3\big(s^{p-1}+s^{p/2-1}\big)\m rK\EE\Big[\frac{1}{s}\hs{-3pt}\int_0^s \hs{-4pt}e^{-ru}f(t\p u,X^x_u)du\Big]\Big)
\end{align}
for suitable $C_2=C_2(p,\eps,x)>$ and $C_3=C_3(p,\eps,x)>0$.
We take $p>2$ and observe that in the limit as $s\downarrow 0$
we get
\begin{align}\label{eq:DK03}%\hs{-3pc}
-rK\EE&\Big[\frac{1}{s}\int_0^s e^{-ru}f(t\p u,X^x_u)du\Big]+C_2(s^{p}\p s^{p/2}) \p C_3\big(s^{p-1}+s^{p/2-1}\big)\to -rK f(T_\delta,x)
\end{align}
and therefore the negative term in \eqref{eq:DK02} dominates since $f(T_\delta,x)>0$ for all $x\in(0,\infty)$. From \eqref{eq:DK02} and \eqref{eq:DK03} we get a contradiction and by arbitrariness of $\eps$ we conclude that for any $x>K$ there must be $t<T_\delta$ large enough and such that $(t,x)\in D^{(2)}$.

We show now that $(t,x)\in D^{(2)}$ with $x>K$ implies $(t,y)\in D^{(2)}$ for any $y>x$. Take $y>x>K$ and assume $(t,y)\in C^{(2)}$. Set $\tau=\tau^*(t,y)$ optimal for $V^{(2)}(t,y)$ defined as in \eqref{3.11} and notice that the horizontal segment $[t,T_\delta]\times \{x\}$ belongs to $D^{(2)}$ by Proposition \ref{prop:incrCD}. Then the process $(t\p s,X^y_s)_{s\in[0,T_\delta-t]}$ cannot hit the horizontal segment $[t,T_\delta]\times \{K\}$ without entering into the stopping set. Hence by \eqref{ltsf-3} and \eqref{3.8} we have
\begin{align*}%\hs{-3pc}
V^{(2)}(t,y)=\EE e^{-r\tau}G(t\p\tau,X^{y}_\tau)= G(t,y)-rK\EE\Big[\int^\tau_0e^{-r s}f(t\p s,X^y_s)ds\Big]\le G(t,y),
\end{align*}
i.e.~it is optimal to stop at once at $(t,y)$, and therefore we get a contradiction. We then conclude that for each $t\in[0,T_\delta)$ there exists at most a unique point $c^{(2)}(t)>K$ such that $D^{(2)}_t\cap(K,\infty)=[c^{(2)}(t),\infty)$ with the convention that if $c^{(2)}(t)=+\infty$ the set is empty. We remark that for now we have only proven that $c^{(2)}(t)<+\infty$ for $t<T_\delta$ suitably large. Finiteness of $c^{(2)}$ will be provided in Proposition \ref{prop:finite-c} below.\vs{+6pt}

\emph{Step 3}. Now let us consider the set $\{(t,x)\in[0,T_\delta)\times(0,K]\}$. From Proposition \ref{prop:D2D1} it follows that for each $t\in[0,T_\delta)$ the set $D^{(2)}_t\cap(0,K)$ is not empty. Moreover by using arguments as in step 2 above one can prove that for any $x<K$ there exists $t<T_\delta$ such that $(t,x)\in D^{(2)}$, and that $x\in D^{(2)}_t\implies y\in D^{(2)}_t$ for $0<y\le x\le K$. The latter implies that for each $t\in[0,T_\delta)$ there exists a unique point $b^{(2)}(t)\in(0,K)$ such that $D^{(2)}_t\cap(0,K)=(0,b^{(2)}(t)]$.
\vs{6pt}

Steps 1, 2 and 3 above imply that $C^{(2)}_t=\big(b^{(2)}(t),c^{(2)}(t)\big)$ for all $t\in[0,T_\delta)$ and for suitable functions $b^{(2)},\,c^{(2)}:[0,T_\delta)\to (0,\infty]$. The fact that $b^{(2)}(t)\ge b^{(1)}(t)$ is an obvious consequence of Proposition \ref{prop:D2D1}. On the other hand Proposition \ref{prop:incrCD} implies that $t\mapsto b^{(2)}(t)$ is increasing whereas $t\mapsto c^{(2)}(t)$ is decreasing so that their left-limits always exist. Since $\lim_{t\uparrow T_\delta}c^{(2)}(t)\ge K$ and for any $x>K$ there exists $t<T_\delta$ with $(t,x)\in D^{(2)}$ (see step 2 above), then $\lim_{t\uparrow T_\delta}c^{(2)}(t)=K$. From a similar argument and step 3 above we also obtain $\lim_{t\uparrow T_\delta}b^{(2)}(t)=K$.
\end{proof}

\begin{remark}\label{rem:optexn.1}
\textbf{1.} The existence of an upper boundary is a key consequence of the constraints imposed by the structure of $\mathcal{S}^{n}_{t,T}$ in \eqref{eq:StT} (see also the discussion at the beginning of Section \ref{sec:prob}) and it nicely reflects the time value of the early exercise feature of the option. Indeed for $t<T_\delta$ the holder may find profitable to use the first right even if $X_t>K$ (i.e.~the put part of the immediate exercise payoff is zero) in order to maintain the opportunity of early exercising the remaining put option with maturity at $T$ (after the refracting period).

If for some $t<T_\delta$ the underlying price $X_t$ is too large, the holder does not believe that it will fall below $K$ prior to $T_\delta$. In this case delaying the exercise of the first right is likely to produce a null put payoff while at the same time reducing the value of the subsequent early exercise right. On the other hand, by using immediately the first right, the option holder will maximise at least the opportunities of an early exercise of the second option. It then becomes intuitively clear that while the holder of a standard American put option has nothing to lose in waiting as long as $X$ stays above $K$, for our swing contract things are different: waiting always costs to the holder in terms of the early exercises of future rights. Hence when the immediate put payoff of the first option is way too much ``out of the money'' it is better to get rid of it!
\vspace{+5pt}

\noindent \textbf{2.} We observe that it is $\PP$-almost surely optimal to exercise the first right of the swing option strictly before the maturity $T_\delta$ since $$\PP(X^x_t\in C^{(2)}_t\:\text{for all $t\in[0,T_\delta]$})\le \PP(X^x_{T_\delta}=K)=0.$$

\noindent \textbf{3.} It is known that as $r\to0$ the premium of early exercise for the American option vanishes thus meaning that $b^{(1)}\equiv0$ for $r=0$. Analogously, for $r=0$ there is no incentive in using the first right of the swing contract with $n=2$ at any time prior to $T_\delta$ so that $b^{(2)}\equiv 0$ and $c^{(2)}\equiv+\infty$. This fact will be clearly embodied in the pricing formula for $V^{(2)}$ in Theorem \ref{thm:eep2} below.
\end{remark}

In Theorem \ref{thm:b2c2} we have proven that $c^{(2)}(t)<\infty$ for all $t$ smaller than but ``sufficiently close'' to $T_\delta$. We now aim at strengthening this statement by proving that $c^{(2)}$ is indeed finite on $[0,T_\delta]$.
\begin{prop}\label{prop:finite-c}
For all $t\in[0,T_\delta]$ the upper boundary $c^{(2)}$ is finite, i.e.
\begin{align}\label{eq:cbdd}
\sup_{t\in[0,T_\delta]}c^{(2)}(t)<+\infty.
\end{align}
\end{prop}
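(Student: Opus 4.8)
The plan is to show that the option holder derives only a uniformly bounded benefit from keeping the first right alive once $X$ is very large, so that for $x$ large enough stopping immediately is optimal regardless of $t\in[0,T_\delta]$. Since by Theorem \ref{thm:b2c2} the map $t\mapsto c^{(2)}(t)$ is decreasing, it suffices to bound $c^{(2)}(0)$, i.e.\ to produce a level $N>K$, independent of $t$, such that $(t,x)\in D^{(2)}$ for all $t\in[0,T_\delta]$ and all $x\ge N$. First I would take $x>K$, set $\tau=\tau^*(t,x)$ (the optimal time in \eqref{3.11}, bounded by $s:=T_\delta-t$), and use the representation \eqref{3.8} together with \eqref{ltsf-3} to write
\begin{align*}
V^{(2)}(t,x)-G(t,x)=-rK\,\EE\Big[\int_0^\tau e^{-ru}\big(I(X^x_u<K)+f(t\p u,X^x_u)\big)du\Big]+\tfrac12\,\EE\int_0^\tau e^{-ru}\,dL^K_u(X^x).
\end{align*}
The first (negative) term is harmless; the whole point is to control the local-time term, which is the only source of a possible incentive to continue.

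The key step is a quantitative estimate of $\EE\int_0^\tau e^{-ru}\,dL^K_u(X^x)$ that decays as $x\to\infty$, uniformly in $t\in[0,T_\delta]$. To this end introduce, as in Step 2 of Theorem \ref{thm:b2c2}, the first time the process comes down near the strike: fix $\eps\in(0,K)$ and put $\sigma_\eps:=\inf\{u\ge0:X^x_u\le K+\eps\}\wedge T_\delta$. For $u\le\sigma_\eps$ the local time $L^K_u(X^x)$ vanishes, so by the strong Markov property and Lemma \ref{lem:loctime} (applied with $\sigma=\sigma_\eps$, $\tau=\sigma_\eps\vee s$, together with the supermartingale/martingale property of $(e^{-ru}X^x_u)$, exactly as in \eqref{eq:DK00b}) one gets
\begin{align*}
\EE\int_0^\tau e^{-ru}\,dL^K_u(X^x)\le \EE\Big[I(\sigma_\eps<s)\,\EE\Big(\int_{\sigma_\eps}^{\sigma_\eps\vee s}e^{-ru}\,dL^K_u(X^x)\,\Big|\,\cF_{\sigma_\eps}\Big)\Big]\le 3K\,\PP(\sigma_\eps<s).
\end{align*}
Hence $0\le V^{(2)}(t,x)-G(t,x)\le \tfrac32 K\,\PP(\sigma_\eps<s)\le \tfrac32 K\,\PP(\sigma_\eps<T_\delta)$. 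It now remains to make $\PP(\sigma_\eps<T_\delta)$ small by taking $x$ large: writing $\alpha=\ln\!\big(x/(K+\eps)\big)$ and bounding the running infimum of the geometric Brownian motion from below by (minus) the running supremum of an auxiliary drifted Brownian motion $Z$ with bounded drift on $[0,T_\delta]$ — precisely the chain of inequalities \eqref{eq:prob01} — Doob's maximal inequality and the BDG inequality give, for any fixed $p>1$,
\begin{align*}
\PP(\sigma_\eps<T_\delta)\le \PP\Big(\sup_{0\le u\le T_\delta}|Z_u|\ge\alpha\Big)\le \frac{C(p,\eps)}{\alpha^p}\Big(T_\delta^{\,p}+T_\delta^{\,p/2}\Big),
\end{align*}
with a constant $C(p,\eps)$ that does not depend on $t$ or $x$. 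Since $\alpha\to\infty$ as $x\to\infty$, the right-hand side tends to $0$, uniformly in $t$.

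Putting the pieces together: choose $p>1$ and $\eps\in(0,K)$ and then pick $N>K$ so large that $\tfrac32 K\,\PP(\sigma_\eps<T_\delta)=0$ fails to be an obstruction — more precisely, so that the above bound forces $V^{(2)}(t,x)-G(t,x)\le 0$; combined with the reverse inequality $V^{(2)}\ge G$ this yields $V^{(2)}(t,x)=G(t,x)$, i.e.\ $(t,x)\in D^{(2)}$, for all $x\ge N$ and all $t\in[0,T_\delta]$. Consequently $c^{(2)}(t)\le N$ for every $t$, which is \eqref{eq:cbdd}. The main obstacle is obtaining the decay of the local-time contribution \emph{uniformly in $t$}: one must resist estimating $\EE\int_0^\tau e^{-ru}dL^K_u(X^x)$ directly (there the horizon $s=T_\delta-t$ could shrink and the bound would degenerate) and instead route the argument through $\sigma_\eps$ and the time-homogeneous tail bound on $\PP(\sigma_\eps<T_\delta)$, which is controlled solely by the distance $\alpha$ of the starting point from the strike. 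The remaining ingredients — the local-time identity of Lemma \ref{lem:loctime}, the martingale property of $e^{-ru}X^x_u$, and the Doob/BDG estimates — are exactly those already used in Step 2 of Theorem \ref{thm:b2c2}, only now arranged so that every constant is independent of $(t,x)$.
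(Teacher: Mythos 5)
There is a genuine gap in your argument, and it is precisely at the point where you describe the first (negative) term as ``harmless'' and drop it. After discarding it you arrive at the bound
\begin{align*}
0 \;\le\; V^{(2)}(t,x)-G(t,x) \;\le\; \tfrac32 K\,\PP(\sigma_\eps<T_\delta),
\end{align*}
which indeed tends to $0$ as $x\to\infty$, uniformly in $t$. But this only says that $V^{(2)}-G$ is \emph{small} for large $x$; it never forces $V^{(2)}(t,x)=G(t,x)$. For any finite $x$ the right-hand side is strictly positive, and there is no a priori dichotomy (e.g. that $V^{(2)}-G$ is either $0$ or bounded away from $0$) that would let you upgrade ``small'' to ``zero''. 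So the final sentence of your proof --- ``pick $N$ so large that the above bound forces $V^{(2)}(t,x)-G(t,x)\le 0$'' --- does not follow from what precedes it.

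The term you discarded is in fact the essential one. The correct mechanism, and what the paper's proof actually does, is to keep the negative contribution
$-rK\,\EE\big[\int_0^\cdot e^{-ru}f(t\p u,X^x_u)\,du\big]$
and show that it \emph{dominates} the local-time contribution as $x\to\infty$. Both terms decay in $x$: the local-time term decays like a Gaussian tail with ``variance'' governed by the short time window $\eps$ (after fixing $t=t_0-\eps/2$ and comparing to the hitting of a finite level $\overline x$ before time $\eps$), while the $f$-term decays like a Gaussian tail with ``variance'' $\delta$. Choosing $\eps<\delta$ makes the positive term decay strictly faster, so their ratio tends to $0$ (the L'H\^opital computation at the end of Step 1 in the paper). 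That rate comparison is what yields the contradiction $V^{(2)}(t,x)-G(t,x)<0$ for $x$ large; without keeping the negative term, no contradiction is available. Note also that replacing $\PP(\sigma_\eps<s)$ by $\PP(\sigma_\eps<T_\delta)$ in your estimate enlarges the time horizon to $T_\delta$, which makes the tail of the local-time term even heavier (variance $T_\delta$ rather than $\eps$), so the needed domination would be even less plausible to establish. Finally, the paper has to treat $t=0$ separately (its Step 2, via a shift-of-maturity argument), because the contradiction argument is run at an interior time $t_0-\eps/2>0$; your proposal does not address this boundary case either.
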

\begin{proof}
The proof is provided in two steps.
\vs{+6pt}

\emph{Step 1}. Let us assume that \eqref{eq:cbdd} is violated and denote $t_0:=\sup\{t\in[0,T_\delta]\,:\,c^{(2)}(t)=+\infty\}$. Consider for now the case $t_0>0$ and note that since $t\mapsto c^{(2)}(t)$ is decreasing by Theorem \ref{thm:b2c2} then $c^{(2)}(t)=+\infty$ for all $t\in[0,t_0)$. The function $c^{(2)}$ is right-continuous on $(t_0,T_\delta]$, in fact for any $t\in(t_0,T_\delta]$ we take $t_n\downarrow t$ as $n\to\infty$ and the sequence $(t_n,c^{(2)}(t_n))\in D^{(2)}$ converges to $(t,c^{(2)}(t+))$, with $c^{(2)}(t+):=\lim_{s\downarrow t}c^{(2)}(s)$. Since $D^{(2)}$ is closed it must also be $(t,c^{(2)}(t))\in D^{(2)}$ and $c^{(2)}(t+)\ge c^{(2)}(t)$ by Theorem \ref{thm:b2c2}, hence $c^{(2)}(t+)=c^{(2)}(t)$ by monotonicity.

For $x\in(K,+\infty)$ we define the right-continuous inverse of $c^{(2)}$ by $t_c(x):=\sup\big\{t\in[0,T_\delta]\,:\,c^{(2)}(t)>x\big\}$ and observe that $t_c(x)\ge t_0$. Fix $\eps>0$ such that $\eps<\delta\wedge t_0$, then there exists $\overline{x}=\overline{x}(\eps)>K$ such that $t_c(x)-t_0\le \eps/2$ for all $x\ge \overline{x}$ and we denote $\theta=\theta(x):=\inf\big\{u\ge0\,:\, X^x_u\le \overline{x}\big\}\wedge T_\delta$. In particular we note that if $c^{(2)}(t_0+)=c^{(2)}(t_0)<+\infty$ we have $t_c(x)=t_0$ for all $x>c^{(2)}(t_0)$. We fix $t=t_0-\eps/2$, take $x>\overline{x}$ and set $\tau=\tau^*(t,x)$ the optimal stopping time for $V^{(2)}(t,x)$ (cf.~\eqref{3.11}). 

From \eqref{3.8} we get
\begin{align*}%\hs{-1pc}
V^{(2)}&(t,x)-G(t,x)\nonumber\\
=&\EE e^{-r\tau}G(t\p\tau,X^x_\tau)-G(t,x)\nonumber\\
\le& \EE\Big[- rK\int^\tau_0{\hs{-4pt}e^{-rs}f(t\p s,X^x_s)ds}\p \frac{1}{2}\int_0^\tau{\hs{-4pt}e^{-rs}dL^K_s(X^x)}\Big]\nonumber\\
\le&- rK\EE\Big[I(\tau\le \theta)\int^{\tau}_0{\hs{-4pt}e^{-rs}f(t\p s,X^x_s)ds}\Big]+\EE\Big[I(\tau>\theta)\frac{1}{2}\int_\theta^\tau{\hs{-4pt}e^{-rs}dL^K_s(X^x)}\Big]\nonumber
\end{align*}
where we have used that $L^K_s(X^x)=0$ for $s\le \theta$. Since $c^{(2)}(t)=+\infty$ for $t\in[t_0-\eps/2,t_0)$ and the boundary is decreasing then it must be $\{\tau\le\theta\}\subseteq\{\tau\ge\eps/2\}$. Hence we obtain
\begin{align}
\label{eq:bdd00} V^{(2)}&(t,x)-G(t,x)\\
\le&- rK\EE\Big[\int^{\eps/2}_0{\hs{-4pt}e^{-rs}f(t\p s,X^x_s)ds}\Big]\nonumber\\
&+\EE\Big[I(\tau>\theta)\Big(\frac{1}{2}\int_\theta^\tau{\hs{-4pt}e^{-rs}dL^K_s(X^x)}\p  rK\int^{\eps/2}_0{\hs{-4pt}e^{-rs}f(t\p s,X^x_s)ds}\Big)\Big]\nonumber\\
\le&- rK\EE\Big[\int^{\eps/2}_0{\hs{-4pt}e^{-rs}f(t\p s,X^x_s)ds}\Big]+\frac{1}{2}\EE\left[I(\tau>\theta)\EE\Big[\int_\theta^{\tau\vee \theta}{\hs{-4pt}e^{-rs}dL^K_s(X^x)}\Big|\cF_\theta\Big]\,\right]\nonumber\\
&+rK\frac{\eps}{2}\PP(\tau>\theta)\nonumber
\end{align}
where in the last inequality we have also used that $0\le f\le 1$ on $[0,T_\delta]\times(0,\infty)$.
We now estimate separately the two positive terms in the last expression of \eqref{eq:bdd00}. For the one involving the local time we argue as in \eqref{eq:DK00b}, i.e.~we use Lemma \ref{lem:loctime} and the martingale property of the discounted price to get
\begin{align*}%\label{eq:bdd01}
%I(&\tau>\theta)
\EE\Big(\int_\theta^{\tau\vee \theta}{\hs{-4pt}e^{-rs}dL^K_s(X^x)}\Big|\cF_\theta\Big)
\le 3K.
% I(\tau>\theta).
\end{align*}
Then for a suitable constant $C_1>0$ independent of $x$ we get
\begin{align}\label{eq:bdd02}
\EE\left[I(\tau>\theta)\EE\Big(\int_\theta^{\tau\vee \theta}{\hs{-4pt}e^{-rs}dL^K_s(X^x)}\Big|\cF_\theta\Big)\,\right]+rK\frac{\eps}{2}\PP(\tau>\theta)
\le C_1 \PP(\tau>\theta).
\end{align}
Observe now that on $\big\{\tau>\theta\big\}$ the process $X$ started at time $t=t_0-\eps/2$ from $x>\overline{x}$ must hit $\overline{x}$ prior to time $t_0+\eps/2$, hence, for $c=r+\sigma^2/2$, we obtain
\begin{align}\label{eq:bdd03}%\hs{+3pc}
\PP(\tau>\theta)\le \PP\big(\inf_{0\le t\le \eps} X^x_t<\overline{x}\big)\le\PP\left(\inf_{0\le t\le \eps} B_t<\frac{1}{\sigma}\Big(\ln\frac{\overline{x}}{x}+c\,\eps\Big)\right).
\end{align}
Introduce another Brownian motion by taking $W:=-B$, then from \eqref{eq:bdd03} and the \emph{reflection principle} we find
\begin{align}\label{eq:bdd04}
\PP(\tau>\theta)\le&\PP\left(\sup_{0\le t\le \eps} W_t>-\frac{1}{\sigma}\Big(\ln\frac{\overline{x}}{x}+c\,\eps\Big)\right)=
2\PP\left(W_\eps>-\frac{1}{\sigma}\Big(\ln\frac{\overline{x}}{x}+c\,\eps\Big)\right)\\[+3pt]
=&2\Big[1-\Phi\Big(\tfrac{1}{\sigma\sqrt{\eps}}\big(\ln(x/\,\overline{x})-c\,\eps\big)\Big)\Big]=
2\Phi\Big(\tfrac{1}{\sigma\sqrt{\eps}}\big(\ln(\overline{x}/x)+c\,\eps\big)\Big)\nonumber
\end{align}
with $\Phi(y):=1/\sqrt{2\pi}\int^y_{-\infty}e^{-z^2/2}dz$ for $y\in\R$ and where we have used $\Phi(y)=1-\Phi(-y)$, $y\in\R$.

Going back to \eqref{eq:bdd00} we aim at estimating the first term in the last expression. For that we use Markov property to obtain
\begin{align}\label{eq:bdd05}%\hs{-2pc}
\EE f(t\p s,X^x_s)=e^{-r\delta}\EE\big[\PP\big(X^x_{s+\delta}\le b^{(1)}(t\p s\p \delta)\big|\cF_s\big)\,\big]=e^{-r\delta}\PP\big[X^x_{s+\delta}\le b^{(1)}(t\p s\p \delta)\big]
\end{align}
with $s\in[0,\eps/2]$. For all $x>\overline{x}$ and $s\in[0,\eps/2]$ and denoting $\alpha:=b^{(1)}(t\p \delta)$, the expectation in \eqref{eq:bdd05} is bounded from below by recalling that $b^{(1)}$ is increasing, namely
\begin{align}\label{eq:bdd06}%\hs{-2pc}
\EE f(t\p s,X^x_s)\ge& e^{-r\delta}\PP(X^x_{s+\delta}\le \alpha)\ge e^{-r\delta}\PP\Big(B_{s+\delta}\le \frac{1}{\sigma}\big[\ln(\alpha/x)-c(\delta\p \eps/2)\big]\Big)\\[+3pt]
=&e^{-r\delta}\Phi\Big(\tfrac{1}{\sigma\sqrt{\delta\p s}}\big[\ln(\alpha/\,x)-c\,(\delta\p \eps/2)\big]\Big)\nonumber\\[+3pt]
\ge &e^{-r\delta}\Phi\Big(\tfrac{1}{\sigma\sqrt{\delta}}\big[\ln(\alpha/\,x)-c\,(\delta\p \eps/2)\big]\Big)=:\hat{F}(x)\nonumber
\end{align}
where in the last inequality we have used that $\ln(\alpha/\,x)<0$ and $\Phi$ is increasing. From \eqref{eq:bdd06}, using Fubini's theorem we get
\begin{align}\label{eq:bdd07bis}%\hs{-2pc}
\EE&\Big[\int^{\eps/2}_0{\hs{-4pt}e^{-rs}f(t\p s,X^x_s)ds}\Big]=\int^{\eps/2}_0{\hs{-4pt}e^{-rs}\EE f(t\p s,X^x_s)\,ds}\ge \frac{\eps}{2}e^{-r\eps/2}\hat{F}(x)
\end{align}
for $x>\overline{x}$. We now collect bounds \eqref{eq:bdd00}, \eqref{eq:bdd02}, \eqref{eq:bdd04} and \eqref{eq:bdd07bis} to obtain
\begin{align}\label{eq:bdd07}%\hs{-2pc}
V^{(2)}&(t,x)-G(t,x)\\
\le& 2C_1\Phi\Big(\tfrac{1}{\sigma\sqrt{\eps}}\big(\ln(\overline{x}/x)+c\,\eps\big)\Big)-C_2
\Phi\Big(\tfrac{1}{\sigma\sqrt{\delta}}\big[\ln\big(\alpha/\,x\big) -c\,(\delta\p \eps/2)\big]\Big)\nonumber
\end{align}
where $C_2=C_2(\eps)>0$ and independent of $x$. Since $t,\overline{x},\eps$ are fixed with $\delta>\eps$, we take the limit as $x\to\infty$ and it is not hard to verify by L'H\^opital's rule that
\begin{align*}\hs{-3pc}
\lim_{x\to\infty}&\frac{\Phi\Big(\tfrac{1}{\sigma\sqrt{\eps}}\big(\ln(\overline{x}/x)+c\,\eps\big)\Big)}
{\Phi\Big(\frac{1}{\sigma\sqrt{\delta}}\big[\ln\big(\alpha/\,x\big) -c\,(\delta\p \eps/2)\big]\Big)}=C_3\lim_{x\to\infty}
\frac{\varphi\Big(\frac{1}{\sigma\sqrt{\eps}}\big(\ln(\overline{x}/x)+c\,\eps\big)\Big)}
{\varphi\Big(\frac{1}{\sigma\sqrt{\delta}}\big[\ln\big(\alpha/\,x\big) -c\,(\delta\p \eps/2)\big]\Big)}\\[+3pt]
= & C_4\lim_{x\to\infty}x^\beta\exp\Big(\frac{1}{\sigma^2}\big(1/\delta-1/\eps\big)\big(\ln\,x\big)^2 \Big)= 0\nonumber
\end{align*}
for suitable positive constants $\beta>0$, $C_3$ and $C_4$ and with $\varphi:=\Phi'$ the standard normal density function. Hence the negative term in \eqref{eq:bdd07} dominates for large values of $x$ and we reach a contradiction. That implies $c^{(2)}(t)<+\infty$ for all $t\in (0,T_\delta]$ by arbitrariness of $t_0$.
\vspace{+6pt}

\emph{Step 2}. It remains to show that $c^{(2)}(0)<+\infty$ as well. In order to do so we recall Remark \ref{rem:maturity} and notice that $V^{(2)}(0,x;T_\delta)-G(0,x;T_\delta)=V^{(2)}(\lambda,x;T_\delta+\lambda)-G(\lambda,x;T_\delta+\lambda)$ for $\lambda>0$. Hence the arguments in step 1 may be applied with $t_0=\lambda$ and $T_\delta$ replaced by $T_\delta+\lambda$, proving that indeed $c^{(2)}(0)<+\infty$.
\end{proof}

%%%%%%%%%%%%%%%%%%%%%%%%%%%%%%%%%%%%%%%%%%%%%%%%%%
\subsubsection{Free-boundary problem for $V^{(2)}$ and continuity of the boundaries}\label{subs:fb}

To prepare the ground to the free-boundary problem for $V^{(2)}$ we begin by showing in the next proposition that the value function $V^{(2)}$ fulfills the so-called \emph{smooth-fit} condition at both the optimal boundaries $b^{(2)}$ and $c^{(2)}$.
\begin{prop}\label{prop:smooth-fit}
For all $t\in[0,T_\delta)$ the map $x\mapsto V^{(2)}(t,x)$ is $C^{1}$ across the optimal boundaries, i.e.
\begin{align}\label{3.22}%\hs{6pc}
&V^{(2)}_x (t, b^{(2)}(t)+)=G_x (t, b^{(2)}(t))\\[+3pt]
\label{3.22b}
&V^{(2)}_x (t, c^{(2)}(t)-)=G_x (t, c^{(2)}(t)).
\end{align}
\end{prop}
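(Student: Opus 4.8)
The plan is to establish \eqref{3.22} and \eqref{3.22b} by the classical two-sided argument: one inequality comes for free from the obstacle condition $V^{(2)}\ge G$ together with convexity of $x\mapsto V^{(2)}(t,x)$ (Proposition \ref{prop:contV}), and the reverse one is obtained by a coupling argument that exploits the monotonicity of the boundaries proved in Theorem \ref{thm:b2c2}. Throughout one uses that near each boundary point $G$ is $C^1$ in $x$: for $t<T_\delta$ we have $b^{(2)}(t)<K<c^{(2)}(t)$ (the latter finite by Proposition \ref{prop:finite-c}), so $(K\m x)^+$ is affine in a neighbourhood of $b^{(2)}(t)$ and vanishes in a neighbourhood of $c^{(2)}(t)$, while $R$ is $C^{1,2}$ by Proposition \ref{rem:reguC^2}; hence $G_x(t,b^{(2)}(t))=-1+R_x(t,b^{(2)}(t))$ and $G_x(t,c^{(2)}(t))=R_x(t,c^{(2)}(t))$ are well defined.

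Consider first $b:=b^{(2)}(t)$. Since $V^{(2)}\ge G$ with equality at $(t,b)$, dividing $V^{(2)}(t,b\p\eps)-V^{(2)}(t,b)\ge G(t,b\p\eps)-G(t,b)$ by $\eps>0$ and letting $\eps\downarrow0$ gives $V^{(2)}_x(t,b+)\ge G_x(t,b)$; and since $V^{(2)}(t,\cdot)=G(t,\cdot)$ on $(0,b]$ by Theorem \ref{thm:b2c2} we also get $V^{(2)}_x(t,b-)=G_x(t,b)$. For the reverse inequality, let $\tau_\eps$ be the optimal stopping time for $V^{(2)}(t,b\p\eps)$, i.e.~the first entry of $(t\p s,X^{b+\eps}_s)$ into $D^{(2)}$ (cf.~\eqref{3.11}); by the deterministic coupling $X^{b}_s=\tfrac{b}{b+\eps}X^{b+\eps}_s$ the time $\tau_\eps$ is admissible also for the problem started at $b$, and using optimality of $\tau_\eps$ at $b\p\eps$, sub-optimality at $b$, and $X^{b+\eps}_s-X^b_s=\eps\,e^{\sigma B_s+(r-\frac{\sigma^2}{2})s}$ one obtains
\begin{align*}
\frac{V^{(2)}(t,b\p\eps)-V^{(2)}(t,b)}{\eps}\le \EE\Big[e^{-r\tau_\eps}\,e^{\sigma B_{\tau_\eps}+(r-\frac{\sigma^2}{2})\tau_\eps}\;\frac{G(t\p\tau_\eps,X^{b+\eps}_{\tau_\eps})-G(t\p\tau_\eps,X^{b}_{\tau_\eps})}{X^{b+\eps}_{\tau_\eps}-X^{b}_{\tau_\eps}}\Big].
\end{align*}

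The crucial point, and the one I expect to be the main obstacle, is that $\tau_\eps\to0$ $\PP$-a.s.~as $\eps\downarrow0$. This uses monotonicity of $b^{(2)}$: if $X^{b+\eps}_s\le b$ then $X^{b+\eps}_s\le b\le b^{(2)}(t\p s)$, so $(t\p s,X^{b+\eps}_s)\in D^{(2)}$, whence $\tau_\eps\le \inf\{s\ge0:\sigma B_s+(r-\tfrac{\sigma^2}{2})s\le -\ln(1\p\eps/b)\}\wedge(T_\delta-t)$, and this upper bound tends to $0$ a.s.~because a Brownian path with drift takes arbitrarily small negative values in arbitrarily small time. Given $\tau_\eps\to0$, one has $X^{b+\eps}_{\tau_\eps},X^{b}_{\tau_\eps}\to b$, $e^{-r\tau_\eps}\to1$ and $e^{\sigma B_{\tau_\eps}+(r-\frac{\sigma^2}{2})\tau_\eps}\to1$ a.s., and by the $C^1$-in-$x$ regularity of $G$ near $(t,b)$ the difference quotient of $G$ in the display converges to $G_x(t,b)$; since the integrand is dominated by $L\sup_{0\le s\le T_\delta-t}e^{\sigma B_s+(r-\frac{\sigma^2}{2})s}\in L^1$ (with $L$ the Lipschitz constant of Proposition \ref{prop:contV}), dominated convergence yields $V^{(2)}_x(t,b+)\le G_x(t,b)$. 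Combining the two inequalities, $x\mapsto V^{(2)}(t,x)$ is differentiable at $b$ with $V^{(2)}_x(t,b)=G_x(t,b)$, which is \eqref{3.22}.

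The proof of \eqref{3.22b} is entirely symmetric: approaching $c:=c^{(2)}(t)$ from the left through $c-\eps$, the obstacle condition and the identity $V^{(2)}(t,\cdot)=G(t,\cdot)$ on $[c,\infty)$ give $V^{(2)}_x(t,c-)\le G_x(t,c)$, while for the reverse inequality one uses that $c^{(2)}$ is decreasing, so that $X^{c-\eps}_s\ge c$ forces $(t\p s,X^{c-\eps}_s)\in D^{(2)}$ and hence $\tau_\eps\le\inf\{s\ge0:\sigma B_s+(r-\tfrac{\sigma^2}{2})s\ge \ln(1\p\eps/(c-\eps))\}\wedge(T_\delta-t)\to0$ a.s.; the same coupling and dominated convergence argument then gives $V^{(2)}_x(t,c-)\ge G_x(t,c)$. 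Finally, the equalities \eqref{3.22}--\eqref{3.22b} state precisely that the one-sided derivatives of $V^{(2)}(t,\cdot)$ coincide with those of the $C^1$ function $G(t,\cdot)$ at the two boundary points; combined with convexity this gives the asserted $C^1$ regularity of $x\mapsto V^{(2)}(t,x)$ across $b^{(2)}(t)$ and $c^{(2)}(t)$.
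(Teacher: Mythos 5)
Your proposal is correct and takes essentially the same approach as the paper: the inequality $V^{(2)}_x(t,b+)\ge G_x(t,b)$ follows from the obstacle condition and convexity, while the reverse inequality follows by evaluating along the optimal stopping time $\tau_\eps$ for the perturbed starting point, showing $\tau_\eps\to 0$ a.s.\ via monotonicity of the boundary together with the small-time behaviour of Brownian motion, and then passing to the limit by dominated convergence (the paper phrases the difference quotient via the mean value theorem and works at $c^{(2)}$ leaving $b^{(2)}$ to symmetry, whereas you do the opposite, but these are cosmetic variations of the same argument).
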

\begin{proof}
We provide a full proof only for \eqref{3.22b} as the one for \eqref{3.22} can be obtained in a similar way. Fix $0\le t<T_\delta$ and set $x_0 :=c^{(2)}(t)$. It is clear that for arbitrary $\varepsilon>0$ it hods
\begin{align*}%\label{3.23}%\hs{2pc}
\frac{V^{(2)}(t,x_0)-V^{(2)}(t,x_0\m\varepsilon)}{\varepsilon}\le \frac{G(t,x_0)-G(t,x_0\m\varepsilon)}{\varepsilon}
\end{align*}
and hence
\begin{align}\label{3.24}%\hs{2pc}
\limsup_{\varepsilon\to0}\frac{V^{(2)}(t,x_0)-V^{(2)}(t,x_0\m\eps)}{\varepsilon}\le G_x (t, x_0).
\end{align}
To prove the reverse inequality, we denote $\tau_\eps=\tau^*(t,x_0 -\eps)$ which is the optimal stopping time for $V^{(2)}(t,x_0 -\varepsilon)$. Then using the law of iterated logarithm at zero for Brownian motion and the fact that $t\mapsto c^{(2)}(t)$ is decreasing we obtain $\tau_\eps\to0$ as $\varepsilon\to0$, $\PP$-a.s. An application of the mean value theorem gives
\begin{align*}%\label{3.26}%\hs{2pc}
\frac{1}{\varepsilon}\Big(V^{(2)}&(t,x_0)-V^{(2)}(t,x_0\m\varepsilon)\Big)\\
\ge\;& \frac{1}{\eps}\EE\Big[e^{-r\tau^\eps}\Big(G(t\p\tau_\eps,X^{x_0}_{\tau_\eps})-
G(t\p\tau_\eps,X^{x_0-\eps}_{\tau_\eps})\Big)\Big]\nonumber\\
\ge\;&\frac{1}{\eps}\EE\Big[e^{-r\tau_\eps}G_x (t\p\tau_\eps,\xi)\big(X^{x_0}_{\tau_\eps}-X^{x_0 -\eps}_{\tau_\eps}\big)\Big]=\;\EE\Big[e^{-r\tau_\eps}G_x (t\p\tau_\eps,\xi)X^{1}_{\tau_\eps}\Big]\nonumber
\end{align*}
with $\xi(\omega)\in[X^{x_0 -\eps}_{\tau_{\varepsilon}}(\omega),X^{x_0}_{\tau_{\varepsilon}}(\omega)]$ for all $\omega\in\Omega$. Thus recalling that $G_x$ is bounded (cf.~\eqref{lipG00}) and $X^1_{\tau_\eps} \rightarrow 1$ $\PP$-a.s.~as $\eps\to0$, using dominated convergence theorem we obtain
\begin{align}\label{3.27}\hs{2pc}
\liminf_{\varepsilon\to0}\frac{V^{(2)}(t,x_0)-V^{(2)}(t,x_0 \m \eps)}{\varepsilon}\ge G_x (t, x_0).
\end{align}
Finally combining \eqref{3.24} and \eqref{3.27}, and using that $V(t,\,\cdot\,)$ is convex (see Proposition \ref{prop:contV}) we obtain \eqref{3.22b}.
\end{proof}
\vs{6pt}

Standard arguments based on the strong Markov property and continuity of $V^{(2)} $(see~\cite[Sec.~7]{PS}, p.~131) imply that $V^{(2)}\in C^{1,2}$ inside the continuation set $C^{(2)}$ and it solves the following free-boundary problem
\begin{align}
\label{3.31}&V^{(2)}_t \p\L_X V^{(2)}\m rV^{(2)}=0 &\hs{-30pt}\text{in $C^{(2)}$}\\
\label{3.32}&V^{(2)}(t,b^{(2)}(t))=G(t,b^{(2)}(t)) &\hs{-30pt}\text{for $t\in[0,T_\delta]$}\\
\label{3.33}&V^{(2)}(t,c^{(2)}(t))=G(t,c^{(2)}(t)) &\hs{-30pt}\text{for $t\in[0,T_\delta]$}\\
\label{3.34}&V^{(2)}_x (t,b^{(2)}(t)+)=G_x(t,b^{(2)}(t)) &\hs{-30pt}\text{for $t\in[0,T_\delta)$}\\
\label{3.35}&V^{(2)}_x (t,c^{(2)}(t)-)=G_x(t,c^{(2)}(t)) &\hs{-30pt}\text{for $t\in[0,T_\delta)$}\\
\label{3.36}&V^{(2)}(t,x)\ge G(t,x) &\hs{-30pt}\text{for}\;(t,x)\in [0,T_\delta]\times(0,\infty).%\\
\end{align}
Thanks to our results of the previous section the continuation set $C^{(2)}$ and the stopping set $D^{(2)}$ are given by
\begin{align} \label{3.38} %\hs{4pc}
&C^{(2)}= \{\, (t,x)\in[0,T_{\delta})\! \times\! (0,\infty):b^{(2)}(t)<x<c^{(2)}(t)\, \} \\[3pt]
\label{3.39}&D^{(2)}= \{\, (t,x)\in[0,T_{\delta}]\! \times\! (0,\infty):x\le b^{(2)}(t) \;
\text{or}\; x\ge c^{(2)}(t)\, \}.
\end{align}
Notice that \eqref{3.32}--\eqref{3.36} follow by the definition of $b^{(2)}$ and $c^{(2)}$ and by Proposition \ref{prop:smooth-fit}. For the unfamiliar reader we give in appendix the standard proof of \eqref{3.31}.

We now proceed to prove that the boundaries $b^{(2)}$ and $c^{(2)}$ are indeed continuous functions of time and follow the approach proposed in \cite{DeA14}.
\begin{theorem}\label{thm:continuity}
The optimal boundaries $b^{(2)}$ and $c^{(2)}$ are continuous on $[0,T_\delta]$.
\end{theorem}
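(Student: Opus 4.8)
The plan is to follow the approach of \cite{DeA14}. By Theorem \ref{thm:b2c2} both $t\mapsto b^{(2)}(t)$ and $t\mapsto c^{(2)}(t)$ are monotone, hence they admit one-sided limits at every point of $[0,T_\delta]$, and by \eqref{eq:limbdry} these limits agree with $K$ at $t=T_\delta$; so it suffices to prove that (a) both boundaries are right-continuous on $[0,T_\delta)$ and (b) neither has a jump at any interior point $t_0\in(0,T_\delta)$. Claim (a) for $c^{(2)}$ was already established inside the proof of Proposition \ref{prop:finite-c}, and the same reasoning gives it for $b^{(2)}$: if $t<T_\delta$ and $t_n\downarrow t$, then by monotonicity $b^{(2)}(t_n)\downarrow b^{(2)}(t+)\le b^{(2)}(s)<K$ for every $s\in(t,T_\delta)$, so the limit point $(t,b^{(2)}(t+))$ lies in the closed set $D^{(2)}$ with $b^{(2)}(t+)\in(0,K)$, and \eqref{3.39} forces $b^{(2)}(t+)\le b^{(2)}(t)$; monotonicity gives the converse inequality.

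For (b), suppose $b^{(2)}$ jumps at some $t_0\in(0,T_\delta)$: by monotonicity and (a) this means $b^{(2)}(t_0-)<b^{(2)}(t_0)<K$. I would fix levels $b^{(2)}(t_0-)<x_1<x_2<b^{(2)}(t_0)$ and $\eta\in(0,t_0)$. Since $b^{(2)}(t)\le b^{(2)}(t_0-)<x_1$ and $c^{(2)}(t)>K>x_2$ for all $t<t_0$, the open rectangle $\mathcal R:=(t_0-\eta,t_0)\times(x_1,x_2)$ lies in $C^{(2)}$ while its top edge $\{t_0\}\times[x_1,x_2]$ lies in $D^{(2)}$ (cf.~\eqref{3.38}--\eqref{3.39}); in particular $V^{(2)}>G$ on $\mathcal R$ and $V^{(2)}(t_0,\cdot)\equiv G(t_0,\cdot)$ on $[x_1,x_2]$. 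Choose $\phi\in C^\infty_c((x_1,x_2))$ with $\phi\ge0$ and $\phi\not\equiv0$, and set $\Upsilon(t):=\int_{x_1}^{x_2}\phi(x)V^{(2)}(t,x)\,dx$ and $\Theta(t):=\int_{x_1}^{x_2}\phi(x)G(t,x)\,dx$. Using that $V^{(2)}\in C^{1,2}(C^{(2)})$ solves \eqref{3.31} on $\mathcal R$ and integrating by parts in the $x$ variable (boundary terms vanishing by compact support of $\phi$), one gets $\Upsilon'(t)=\int_{x_1}^{x_2}\rho_\phi(x)V^{(2)}(t,x)\,dx$ for $t\in(t_0-\eta,t_0)$, where $\rho_\phi$ is a fixed bounded function of $x$ built from $\phi,\phi',\phi''$. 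By continuity of $V^{(2)}$ on $[0,T_\delta]\times(0,\infty)$ (Proposition \ref{prop:contV}) and dominated convergence, $\Upsilon'(t_0-)=\int\rho_\phi(x)G(t_0,x)\,dx$; reversing the integrations by parts, using \eqref{ltsf-3} and that $G=(K-x)+R$ is $C^2$ in $x$ on $(x_1,x_2)\subset(0,K)$ (Proposition \ref{rem:reguC^2}), this equals $-\int\phi(x)\big(H(t_0,x)-G_t(t_0,x)\big)\,dx$. On the other hand $\Upsilon-\Theta>0$ on $(t_0-\eta,t_0)$ and $(\Upsilon-\Theta)(t_0)=0$, while $\Theta$ is $C^1$ near $t_0$ with $\Theta'(t)=\int\phi(x)R_t(t,x)\,dx$ continuous (Proposition \ref{rem:reguC^2}); hence the one-sided derivative $(\Upsilon-\Theta)'(t_0-)$ exists and is $\le0$, which after cancelling the $\Theta'$-terms reads $\int\phi(x)H(t_0,x)\,dx\ge0$. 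This contradicts \eqref{ltsf-3}, since there $H(t_0,x)=-rK\big(1+f(t_0,x)\big)\le-rK<0$ for $x\in(x_1,x_2)\subset(0,K)$ whereas $\phi\ge0$, $\phi\not\equiv0$. Thus $b^{(2)}$ has no interior jump, and together with (a) and \eqref{eq:limbdry} it is continuous on $[0,T_\delta]$.

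The argument for $c^{(2)}$ is identical in structure. A jump at $t_0\in(0,T_\delta)$ would give $K<c^{(2)}(t_0)<c^{(2)}(t_0-)<\infty$ (finiteness by Proposition \ref{prop:finite-c}); picking $c^{(2)}(t_0)<x_1<x_2<c^{(2)}(t_0-)$ one again obtains a rectangle $\mathcal R\subset C^{(2)}$ with top edge in $D^{(2)}$, now contained in the strip $(K,\infty)$, on which $G=R$ and $H(t_0,x)=-rK f(t_0,x)<0$ because $f(t_0,x)=e^{-r\delta}\PP\big(X^x_\delta\le b^{(1)}(t_0+\delta)\big)>0$ (cf.~\eqref{3.6}). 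The same integration-by-parts identity leads once more to $\int\phi(x)H(t_0,x)\,dx\ge0$, a contradiction, so $c^{(2)}$ is continuous on $[0,T_\delta]$. The main obstacle is the regularity bookkeeping in step (b): one must justify differentiating $\Upsilon$ under the integral sign and integrating by parts on a rectangle whose closure meets the stopping set, and must guarantee existence of the one-sided limit $\Upsilon'(t_0-)$; all of this rests on $V^{(2)}\in C^{1,2}(C^{(2)})$ and on the global continuity of $V^{(2)}$ and of $G_t=R_t$ (Propositions \ref{prop:contV} and \ref{rem:reguC^2}), together with the crucial strict inequality $H(t_0,\cdot)<0$ on the relevant $x$-interval supplied by \eqref{ltsf-3}.
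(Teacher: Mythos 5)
Your proof is correct and follows essentially the same route as the paper's (both patterned on \cite{DeA14}): build a rectangle inside $C^{(2)}$ whose top edge lies in $D^{(2)}$, test the parabolic equation \eqref{3.31} against a nonnegative $\phi\in C^{\infty}_c(x_1,x_2)$, pass to the limit $t\to t_0^-$ using global continuity of $V^{(2)}$ and the identity $V^{(2)}(t_0,\cdot)=G(t_0,\cdot)$ on $[x_1,x_2]$, and contradict the strict negativity of $H(t_0,\cdot)$. The only cosmetic difference is how the sign of the limiting $t$-derivative is obtained: you use that $V^{(2)}-G>0$ on $\mathcal R$ and vanishes at $t_0$ to get $(\Upsilon-\Theta)'(t_0-)\le 0$, whereas the paper invokes $\partial_t(V^{(2)}-G)\le 0$ on $\mathcal D$ (a consequence of Proposition \ref{prop:incrCD}).
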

\begin{proof}
The proof is provided in 3 steps.
\vs{+6pt}

\emph{Step 1}. By monotonicity and boundedness of $b^{(2)}$ and $c^{(2)}$ we obtain their right-continuity (see the first paragraph of step 1 in the proof of Proposition \ref{prop:finite-c}).
%Arguments similar to those used in the first paragraph of step 1 in the proof of Proposition \ref{prop:finite-c} allow us to easily show that $b^{(2)}$ and $c^{(2)}$ are right-continuous on $[0,T_\delta)$. Note that here the argument is even easier since we know that both boundaries are finite-valued.
\vs{+6pt}

\emph{Step 2}. Now we prove that $b^{(2)}$ is also left-continuous. Assume that there exists $t_0\in(0,T_\delta]$ such that $b^{(2)}(t_0-)<b^{(2)}(t_0)$ where $b^{(2)}(t_0-)$ denotes the left-limit of $b^{(2)}$ at $t_0$. Take $x_1<x_2$ such that $b^{(2)}(t_0-)<x_1<x_2<b^{(2)}(t_0)$ and $h>0$ such that $t_0>h$, then define the domain $\cD:=(t_0-h,t_0)\times (x_1,x_2)$ and denote by $\partial_P \cD$ its parabolic boundary formed by the horizontal segments $[t_0-h,t_0]\times\{x_i\}$ with $i=1,2$ and by the vertical one $\{t_0\}\times(x_1,x_2)$. Recall that both $G$ and $V^{(2)}$ belong to $C^{1,2}(\cD)$ and it follows from \eqref{ltsf-3}, \eqref{3.31} and \eqref{3.32} that $u:=V^{(2)}-G$ is such that $u\in C^{1,2}(\cD)\cap C(\overline{\cD})$ and it solves the boundary value problem
\begin{align}\label{cont00}\hs{+4pc}
u_t+\L_Xu-ru=-H\quad\text{on $\cD$ with $u=0$ on $\partial_P\cD$.}
\end{align}
Denote by $C^{\infty}_c(a,b)$ the set of continuous functions which are differentiable infinitely many times with continuous derivatives and compact support on $(a,b)$. Take $\varphi\in C^{\infty}_c(x_1,x_2)$ such that $\varphi\ge0$ and $\int^{x_2}_{x_1}{\varphi(x)dx}=1$. Multiplying \eqref{cont00} by $\varphi$ and integrating by parts we obtain
\begin{align}\label{cont01}
\int^{x_2}_{x_1}{\varphi(x)u_t(t,x)dx}=-\int^{x_2}_{x_1}{u(t,x)\left(\L_X^*\varphi(x)-r\varphi(x)\right)dx}-\int^{x_2}_{x_1}
{H(t,x)
\varphi(x)dx}
\end{align}
for $t\in(t_0\m h,t_0)$ and with $\L_X^*$ denoting the formal adjoint of $\L_X$. Since $u_t\le 0$ in $\cD$ by \eqref{3.14} in the proof of Proposition \ref{prop:incrCD}, the left-hand side of \eqref{cont01} is negative. Then taking limits as $t\to t_0$ and by using dominated convergence theorem we find
\begin{align}\label{cont02}
0\ge &-\int^{x_2}_{x_1}{u(t_0,x)\left(\L_X^*\varphi(x)-r\varphi(x)\right)dx}-\int^{x_2}_{x_1}{H(t_0,x)
\varphi(x)dx}\\[+3pt]
=&-\int^{x_2}_{x_1}{H(t_0,x)
\varphi(x)dx}\nonumber
\end{align}
where we have used that $u(t_0,x)=0$ for $x\in(x_1,x_2)$ by \eqref{cont00}. We now observe that $H(t_0,x)<-\ell$ for $x\in(x_1,x_2)$ and a suitable $\ell>0$ by \eqref{ltsf-3}, therefore \eqref{cont02} leads to a contradiction and it must be $b^{(2)}(t_0-)=b^{(2)}(t_0)$.
\vs{+6pt}

\emph{Step 3}. To prove that $c^{(2)}$ is left-continuous we can use arguments that follow the very same lines as those in step 2 above and therefore we omit them for brevity.
\end{proof}

%%%%%%%%%%%%%%%%%%%%%%%%%%%%%%%%%%%%%%%%%%%%%%%%%%%%%%%%%%%%%%%%%%%%%%%%%%%%%%%%%
\subsubsection{The EEP representation of the option's value and equations for the boundaries}\label{subs:EEP}

Finally we are able to find an early-exercise premium (EEP) representation for $V^{(2)}$ of problem \eqref{3.1} and a system of coupled integral equations for the free-boundaries $b^{(2)}$ and $c^{(2)}$. The following functions will be needed in the next theorem:
\begin{align}\hs{-2pc}
\label{defJ}&J(t,x):=\EE_x G(T_\delta,X_{T_\delta-t}),\\[+3pt]
\label{defK}&K(b^{(1)},b^{(2)},c^{(2)}\,;x,t,s):=\PP_x(X_s\le b^{(2)}(t\p s))\\[+3pt]
&\hspace{+120pt}\p e^{-r\delta}\PP_x\big(X_s \le b^{(2)}(t\p s),X_{s+\delta} \le b^{(1)}(t\p s\p\delta)\big)\nonumber\\
&\hspace{+120pt}\p e^{-r\delta}\PP_x\big(X_s \ge c^{(2)}(t\p s),X_{s+\delta} \le b^{(1)}(t\p s\p\delta)\big).\nonumber
\end{align}
\begin{theorem}\label{thm:eep2}
The value function $V^{(2)}$ of \eqref{3.1} has the following representation
\begin{align}\label{3.45} \hs{-2pc}
V^{(2)}(t,x)=\;e^{-r(T_\delta-t)}J(t,x)+rK\int_0^{T_\delta-t} e^{-rs}K(b^{(1)},b^{(2)},c^{(2)}\,;x,t,s) ds
\end{align}
for $t\in[0,T_\delta]$ and $x\in (0,\infty)$. The optimal stopping boundaries $b^{(2)}$ and $c^{(2)}$ of \eqref{3.38} and \eqref{3.39} are the unique couple of continuous functions solving the system of coupled nonlinear integral equations
\begin{align} %\hs{1pc}
\label{3.46}&G(t,b^{(2)}(t))=\;e^{-r(T_\delta-t)}J(t,b^{(2)}(t))+rK\int_0^{T_\delta-t} e^{-rs}K(b^{(1)},b^{(2)},c^{(2)}\,;b^{(2)}(t),t,s) ds\\[+3pt]
\label{3.47}&G(t,c^{(2)}(t))=\;e^{-r(T_\delta-t)}J(t,c^{(2)}(t))+rK\int_0^{T_\delta-t} e^{-rs}K(b^{(1)},b^{(2)},c^{(2)}\,;c^{(2)}(t),t,s) ds
\end{align}
with $b^{(2)}(T_\delta)=c^{(2)}(T_\delta)=K$ and $b^{(2)}(t)\le K\le c^{(2)}(t)$ for $t\in[0,T_\delta]$.
\end{theorem}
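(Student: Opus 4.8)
The plan for the representation \eqref{3.45} is to apply the change-of-variable formula with local time on curves (equivalently, It\^o--Tanaka together with a localisation) to $s\mapsto e^{-rs}V^{(2)}(t\p s,X^x_s)$ on $[0,T_\delta-t]$. This is legitimate because $V^{(2)}\in C^{1,2}$ in $C^{(2)}$ and solves \eqref{3.31} there, $V^{(2)}=G$ on $D^{(2)}$ with $(\partial_t\p\L_X\m r)G=H$ on the interior of $D^{(2)}$ by \eqref{ltsf-3} (note that $D^{(2)}$ does not meet the line $x=K$, so there $H$ is the classical quantity), the smooth-fit of Proposition \ref{prop:smooth-fit} cancels the local-time terms along the graphs of $b^{(2)}$ and $c^{(2)}$, and $V^{(2)}$ is $C^{1,2}$ in a neighbourhood of the line $x=K$ since $K\in C^{(2)}_t$ for every $t$. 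Taking expectations, removing the martingale term by localisation, and using $V^{(2)}(T_\delta,\cdot)=G(T_\delta,\cdot)$ (the horizon of \eqref{3.1} at $t=T_\delta$ is zero) one gets $V^{(2)}(t,x)=J(t,x)-\EE_x\big[\int_0^{T_\delta-t}e^{-rs}H(t\p s,X_s)\,I\big((t\p s,X_s)\in D^{(2)}\big)\,ds\big]$; inserting $H(t\p s,y)=-rK\big(I(y<K)\p f(t\p s,y)\big)$ and $D^{(2)}=\{x\le b^{(2)}(t)\}\cup\{x\ge c^{(2)}(t)\}$, using $b^{(2)}(t)<K<c^{(2)}(t)$ to simplify the indicators and then the tower property together with the definition \eqref{3.6} of $f$, the three surviving terms coincide exactly with the three probabilities defining $K(b^{(1)},b^{(2)},c^{(2)};x,t,s)$ in \eqref{defK}, which is \eqref{3.45}. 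The coupled equations \eqref{3.46}--\eqref{3.47} then follow by evaluating \eqref{3.45} at $x=b^{(2)}(t)$ and at $x=c^{(2)}(t)$ and invoking the value-matching \eqref{3.32}--\eqref{3.33}; that $(b^{(2)},c^{(2)})$ belongs to the prescribed class was already established in Theorems \ref{thm:b2c2} and \ref{thm:continuity} and in \eqref{eq:limbdry}.

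For uniqueness, let $(\beta,\gamma)$ be any pair in the prescribed class solving \eqref{3.46}--\eqref{3.47}, put $D_{\beta\gamma}:=\{x\le\beta(t)\}\cup\{x\ge\gamma(t)\}$, let $C_{\beta\gamma}$ be its complement in $[0,T_\delta)\times(0,\infty)$, and let $U$ be the right-hand side of \eqref{3.45} with $(\beta,\gamma)$ in place of $(b^{(2)},c^{(2)})$. Reversing the computation of the previous paragraph, $U(t,x)=J(t,x)-\EE_x\big[\int_0^{T_\delta-t}e^{-rs}H(t\p s,X_s)\,I_{D_{\beta\gamma}}(t\p s,X_s)\,ds\big]$, so $Z_s:=e^{-rs}U(t\p s,X_s)-\int_0^s e^{-ru}H(t\p u,X_u)\,I_{D_{\beta\gamma}}(t\p u,X_u)\,du$ is a $\PP_x$-martingale and, since $H<0$, $s\mapsto e^{-rs}U(t\p s,X_s)$ is a supermartingale. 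I would then argue: (a) $U=G$ on $D_{\beta\gamma}$ --- for $x\le\beta(t)$ one applies optional sampling to $Z$ at $\sigma:=\inf\{s:X_s\ge\beta(t\p s)\}\wedge(T_\delta-t)$ and uses \eqref{3.46} (to identify $U=G$ at the first time $X$ meets $\beta$) and $U(T_\delta,\cdot)=G(T_\delta,\cdot)$ to get $U(t,x)=\EE_x[e^{-r\sigma}G(t\p\sigma,X_\sigma)]-\EE_x[\int_0^\sigma e^{-ru}H\,du]$; comparing with the It\^o--Tanaka identity \eqref{3.8} applied to $G$ and $\sigma$, whose local-time term vanishes because $X$ stays strictly below $\beta(t\p\cdot)<K$ on $[0,\sigma)$, one concludes $U(t,x)=G(t,x)$, the case $x\ge\gamma(t)$ being symmetric via \eqref{3.47}; (b) $U\le V^{(2)}$ --- for $(t,x)\in C_{\beta\gamma}$, stopping $Z$ at the exit time $\tau_{\beta\gamma}$ of $C_{\beta\gamma}$ and using (a) gives $U(t,x)=\EE_x\big[e^{-r\tau_{\beta\gamma}}G(t\p\tau_{\beta\gamma},X_{\tau_{\beta\gamma}})\big]\le V^{(2)}(t,x)$, while on $D_{\beta\gamma}$ one has $U=G\le V^{(2)}$ directly; (c) $C_{\beta\gamma}=C^{(2)}$, which forces $\beta=b^{(2)}$, $\gamma=c^{(2)}$ and completes the proof.

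Step (c) is the main obstacle, and this is where the argument departs from the classical single-boundary case. The usual shortcut --- inferring $U\ge V^{(2)}$ from the supermartingale property once $U\ge G$ is known --- is not available, because $U\ge G$ on $C_{\beta\gamma}$ need not hold for an arbitrary candidate in the class: in the It\^o--Tanaka expansion of $U=\EE_x[e^{-r\tau_{\beta\gamma}}G]$ the local-time gain accrued at $x=K$ can be outweighed by the running loss $H<0$ over a too-wide candidate region, so that $U<G$ just inside the candidate boundaries. I would therefore proceed by a boundary-point argument in the spirit of Section \ref{subs:fb} and \cite{DeA14}. On $C_{\beta\gamma}$ the function $U$ solves $(\partial_t\p\L_X\m r)U=0$ with $U=G$ on the graphs of $\beta$ and $\gamma$ and on $\{t=T_\delta\}$. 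If, say, $\beta(t_0)<b^{(2)}(t_0)$ for some $t_0$, then on the non-empty strip $\{(t,x):\beta(t)<x<b^{(2)}(t)\}$ --- which lies in $C_{\beta\gamma}\cap D^{(2)}$ --- the difference $w:=V^{(2)}-U$ is non-negative by (b) and satisfies $(\partial_t\p\L_X\m r)w=H<0$, hence is a strictly positive supersolution there (it cannot vanish at an interior point) while it vanishes on the graph of $\beta$ (which sits in $D^{(2)}$, where $V^{(2)}=G$, and in $D_{\beta\gamma}$, where $U=G$ by (a)). Establishing --- by an argument of the type of Proposition \ref{prop:smooth-fit} applied to $U$ at the candidate curve --- that $U$ is $C^1$ across $\beta$ then yields $w=G-U=o\big(x-\beta(t)\big)$ as $x\downarrow\beta(t)$, contradicting Hopf's boundary-point lemma; this rules out $\beta<b^{(2)}$, and symmetrically $\gamma>c^{(2)}$ is excluded at the other boundary. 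The reverse configurations, in which the candidate stopping set is strictly larger than $D^{(2)}$, need a separate argument (the naive comparison degenerates there), and this, together with the smooth-fit property of the candidate value $U$ at $\beta$ and $\gamma$ (and the regularity of $U$ up to these curves required for Hopf's lemma), is what I expect to be the genuinely delicate point --- precisely because $G$ is not available in closed form and because of the coexistence of two free boundaries. The remaining ingredients (the change-of-variable formula, the local-time estimates, and the pinching of both boundaries at $K$ as $t\uparrow T_\delta$) are of the kind already developed in Sections \ref{subs:CD}--\ref{subs:fb}.
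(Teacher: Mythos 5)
Your derivation of the representation \eqref{3.45} and of steps (a) (``$U=G$ on $D_{\beta\gamma}$'') and (b) (``$U\le V^{(2)}$'') follows the paper essentially verbatim, and you have correctly identified that the shortcut ``$U\ge G\Rightarrow U\ge V^{(2)}$'' is unavailable. The trouble is in step (c). Your Hopf-lemma argument to exclude $\beta<b^{(2)}$ needs $U$ to be $C^{1}$ across the \emph{candidate} curve $\beta$, and that is exactly what cannot be taken for granted: $U$ is merely the solution of a Dirichlet problem on $C_{\beta\gamma}$ with boundary datum $G$ determined by the value-matching encoded in \eqref{3.46}--\eqref{3.47}; continuity of $U$ across $\beta$ is what those equations give you, but $C^{1}$-fit of a Dirichlet solution at a prescribed boundary is a genuine extra property which fails for generic $\beta$ and which, once proved, would already force $\beta=b^{(2)}$. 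Your remark that Proposition \ref{prop:smooth-fit} ``should apply'' to $U$ is exactly where the circularity lies, since that proof relies on $\tau_\eps$ being optimal and on convexity/monotonicity of the true value, neither of which you have for $U$. On top of this you leave the ``reverse configuration'' (candidate stopping set strictly larger than $D^{(2)}$) entirely open, so even if the Hopf step were repaired the proof would still be incomplete.

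The paper avoids both difficulties with a purely probabilistic comparison that never touches smooth-fit of $U$. To rule out $c(t)>c^{(2)}(t)$: start at $x\ge c(t)$, so $V^{(2)}(t,x)=U^{b,c}(t,x)=G(t,x)$, and stop at $\sigma:=\inf\{s:\,b^{(2)}(t\p s)\le X_s\le c^{(2)}(t\p s)\}$; subtracting the identity \eqref{3.48} from \eqref{3.50} and using $U^{b,c}\le V^{(2)}$ yields
$\EE\int_0^{\sigma}e^{-ru}H(t\p u,X^x_u)I\big(b(t\p u)\le X^x_u\le c(t\p u)\big)du\ge 0$, a contradiction because $H<0$ and the process spends a.s.\ positive time in the strip $(c^{(2)},c]$ before time $\sigma$. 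To rule out $b(t)>b^{(2)}(t)$ or $c(t)<c^{(2)}(t)$: start at $x\in(b^{(2)}(t),b(t))$ or $x\in(c(t),c^{(2)}(t))$, stop at the first entry time $\tau^*$ into $D^{(2)}$; the identity \eqref{3.48} gives $\EE e^{-r\tau^*}G(\cdot)=V^{(2)}(t,x)$ and \eqref{3.50} gives $\EE e^{-r\tau^*}G(\cdot)=U^{b,c}(t,x)\p\EE\int_0^{\tau^*}e^{-ru}H\cdot I_{D_{b,c}}\,du$, so $U^{b,c}\le V^{(2)}$ forces $\EE\int_0^{\tau^*}e^{-ru}H\cdot I_{D_{b,c}}\,du\ge 0$, again contradicting $H<0$ and the fact that the path spends positive time in $D_{b,c}$ before $\tau^*$. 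You should replace your step (c) with this stopping-time comparison; it handles both configurations uniformly and requires no regularity of $U$ beyond what steps (a)--(b) already provide.
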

\begin{proof}
\emph{Step 1 - Existence}. Here we prove that $b^{(2)}$ and $c^{(2)}$ solve \eqref{3.46}--\eqref{3.47}. We start by recalling that the following conditions hold:
\emph{(i)} $V^{(2)}$ is $C^{1,2}$ separately in $C^{(2)}$ and $D^{(2)}$ and $V^{(2)}_t\p\L_{X} V^{(2)}\m rV^{(2)}$ is locally bounded on $C^{(2)}\cup D^{(2)}$ (cf.~\eqref{3.31}--\eqref{3.36} and \eqref{ltsf-3});
\emph{(ii)} $b^{(2)}$ and $c^{(2)}$ are of bounded variation due to monotonicity;
\emph{(iii)} $x\mapsto V^{(2)}(t,x)$ is convex (recall proof of Proposition \ref{prop:contV});
\emph{(iv)} $t\mapsto V^{(2)}_x (t,b^{(2)}(t)\pm)$ and $t\mapsto V^{(2)}_x (t,c^{(2)}(t)\pm)$ are continuous for $t\in[0,T_\delta)$ by \eqref{3.34} and \eqref{3.35}. Hence for any $(t,x)\in[0,T_\delta]\times(0,\infty)$ and $s\in[0,T_\delta\m t]$ we can apply the local time-space formula on curves of \cite{Pe-1} to obtain
\begin{align} \label{3.48} \hs{-2pc}
e^{-rs} &V^{(2)}(t\p s,X^x_s)\\
=&V^{(2)}(t,x)+M_u\nonumber\\
 &\p \int_0^{s}
 e^{-ru}(V^{(2)}_t \p\L_X V^{(2)}\m rV^{(2)})(t\p u,X^x_u)
 I\big(X^x_u\neq \{b^{(2)}(t\p u),\, c^{(2)}(t\p u)\}\big)du\nonumber\\
 =\;&V^{(2)}(t,x)+M_u + \int_0^{s}
 e^{-ru}H(t\p u,X^x_u)\Big[I\big(X^x_u < b^{(2)}(t\p u)\big)\p I\big(X^x_u > c^{(2)}(t\p u)\big)\Big]du\nonumber\\
 %&+\int_0^{s}
 %e^{-ru}(G_t \p\L_X G\m rG)(t\p u,X^x_u)I\big(X^x_u > c^{(2)}(t\p u)\big)du\nonumber\\
 =\;&V^{(2)}(t,x)+M_u - rK\int_0^{s}
 e^{-ru}\big(1+f(t\p u,X^x_u)\big)I\big(X^x_u < b^{(2)}(t\p u)\big)du\nonumber\\
 &-rK\int_0^{s}
 e^{-ru}f(t\p u,X^x_u)I\big(X^x_u > c^{(2)}(t\p u)\big)du\nonumber
\end{align}
where we used \eqref{ltsf-3}, \eqref{3.31} and smooth-fit conditions \eqref{3.34}-\eqref{3.35} and where $M=(M_u)_{u\ge 0}$ is a real martingale. Recall that the law of $X^x_u$ is absolutely continuous with respect to the Lebesgue measure for all $u> 0$, then from strong Markov property and \eqref{3.6} we deduce
\begin{align}\label{3.48b}\hs{-2pc}
f(t\p u,X^x_u)I\big(X^x_u < b^{(2)}(t\p u)\big)&=\PP\big(X^x_{u+\delta}\le b^{(1)}(t\p u\p\delta)\big|\cF_u\big)
I\big(X^x_{u}< b^{(2)}(t\p u)\big)\\[+4pt]
&=\PP\big(X^x_{u+\delta}\le b^{(1)}(t\p u\p\delta), X^x_{u}\le b^{(2)}(t\p u)\big|\cF_u\big)\nonumber
\end{align}
and analogously we have that
\begin{align}\label{3.48c}%\hs{2pc}
f(t\p u,X^x_u)I\big(X^x_u > c^{(2)}(t\p u)\big)=\PP\big(X^x_{u+\delta}\le b^{(1)}(t\p u\p\delta), X^x_{u}\ge c^{(2)}(t\p u)\big|\cF_u\big).
\end{align}
In \eqref{3.48} we let $s=T_\delta - t$, take the expectation $\EE$, use \eqref{3.48b}-\eqref{3.48c} and the optional sampling theorem for $M$, then after rearranging terms and noting that $V^{(2)}(T_\delta,x)=G(T_\delta,x)$ for all $x> 0$, we get \eqref{3.45}.
The system of coupled integral equations \eqref{3.46}-\eqref{3.47} is obtained by simply putting $x=b^{(2)}(t)$ and
$x=c^{(2)}(t)$ into \eqref{3.45} and using \eqref{3.32}-\eqref{3.33}.
\vs{6pt}

%%%%%%%%%%%%%%%%%%%%%%%%%%%%%%%%%%%%%%%%%%%%%%%%%%%%%%%%%%%%%%%%%%%%%%%%%%%%%%%
%%% Figure 1 %%%
%%%%%%%%%%%%%%%%%%%%%%%%%%%%%%%%%%%%%%%%%%%%%%%%%%%%%%%%%%%%%%%%%%%%%%%%%%%%%%%
\begin{figure}[t]
\label{fig:1}
\begin{center}
\includegraphics[scale=0.6]{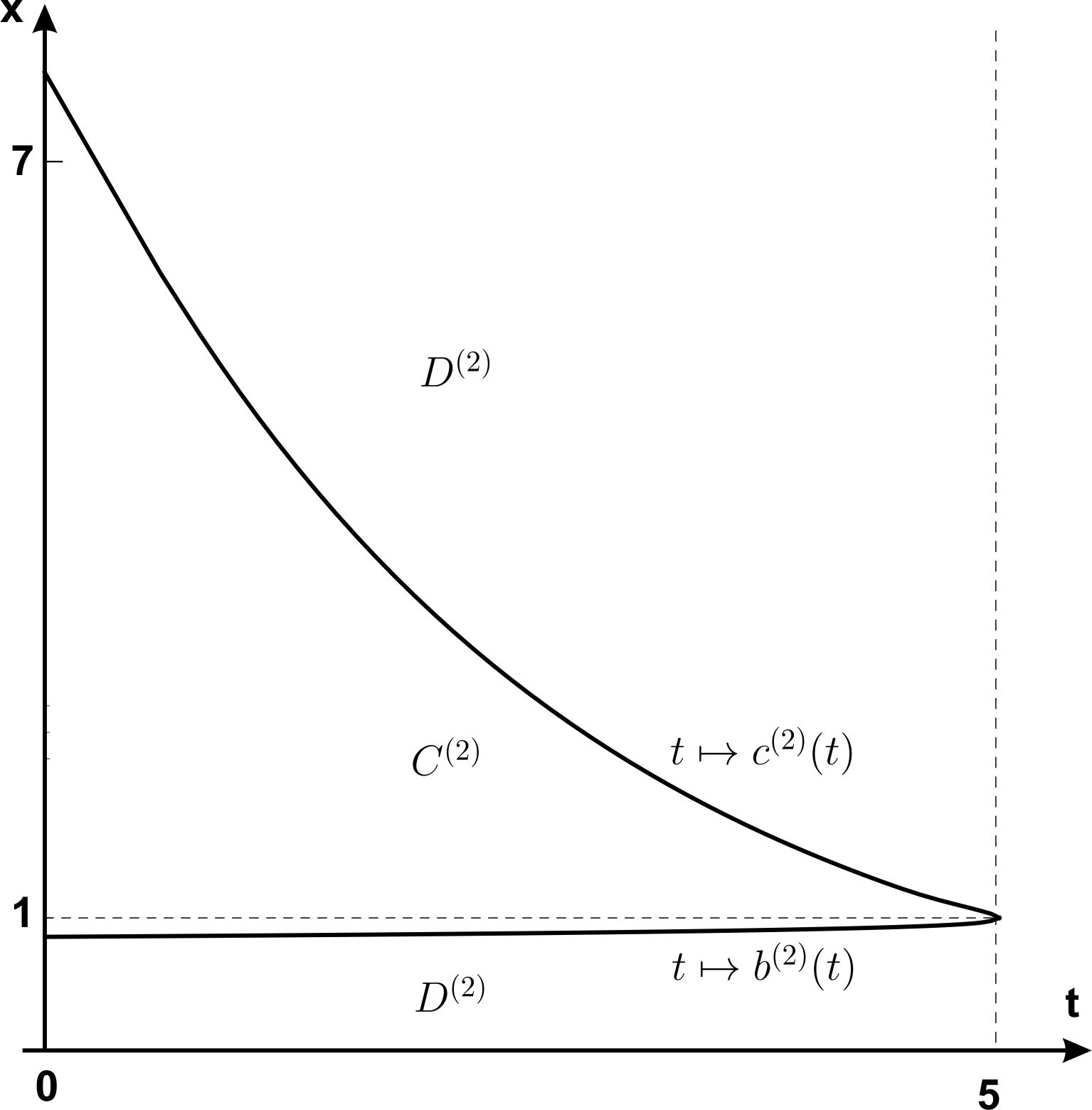}
\end{center}

{\par \leftskip=1.6cm \rightskip=1.6cm \small \ni \vs{-10pt}

\textbf{Figure 1.}
A computer drawing of the optimal exercise boundaries
$t\mapsto b^{(2)}(t)$ and $t\mapsto c^{(2)}(t)$ in the case $K=1$, $r=0.05$ (annual), $\sigma=0.2$ (annual), $T=6$ months, $\delta=1$ month.
\par} \vs{10pt}

\end{figure}

\emph{Step 2 - Uniqueness}. Now we show uniqueness of the solution pair
to the system \eqref{3.46}-\eqref{3.47}. 
The proof is divided in five steps and it is based on arguments similar to those employed in \cite{DuTPe} and originally derived in \cite{Pe-0}.
\vs{6pt}

\emph{Step 2.1}. Let $b:[0,T_\delta]\rightarrow (0,\infty)$ and $c:[0,T_\delta]\rightarrow (0,\infty)$ be another solution pair to the system
\eqref{3.46}-\eqref{3.47} such that $b$ and $c$ are continuous and $b(t)\le K\le c(t)$ for all $t\in[0,T_\delta]$. We will show that these $b$ and $c$ must be equal to the optimal stopping boundaries $b^{(2)}$ and $c^{(2)}$, respectively.

We define a continuous function $U^{b,c}:[0,T_\delta]\times(0,\infty)\rightarrow \R$ by
\begin{align*} \hs{-2pc}%\label{3.49}
U^{b,c}(t,x):=\;&e^{-r(T_\delta-t)}\EE G(T_\delta,X^x_{T_\delta-t})\\
&-\EE\int_0^{T_\delta-t} e^{-ru}H(t\p u,X^x_u)I(X^x_u\le b(t\p u)\;\text{or}\;X^x_u\ge c(t\p u))du.\nonumber
\end{align*}
%for $(t,x)\in[0,T_\delta]\times(0,\infty)$. 
Observe that since $b$ and $c$ solve the system
\eqref{3.46}-\eqref{3.47} then $U^{b,c}(t,b(t))=G(t,b(t))$ and $U^{b,c}(t,c(t))=G(t,c(t))$ for all $t\in [0,T_\delta]$. Notice also that the Markov property of $X$ gives
\begin{align}\label{3.50} \hs{-2pc}
e^{-rs}U^{b,c}(t\p s,X^x_s)&-\int_0^{s} e^{-ru}H(t\p u,X^x_u)I(X^x_u\le b(t\p u)\;\text{or}\;X^x_u\ge c(t\p u))du\\[+4pt]
=&\;U^{b,c}(t,x)+N_s\qquad\PP-\text{a.s.}\nonumber
\end{align}
for $s\in[0,T_\delta-t]$ and with $(N_s)_{0\le s\le T_\delta -t}$ a $\PP$-martingale.
\vspace{+6pt}

\emph{Step 2.2}. We now show that
$U^{b,c}(t,x)=G(t,x)$ for $x\in(0,b(t)]\cup [c(t),\infty)$ and $t\in [0,T_\delta]$. For $x\in(0,b(t)]\cup [c(t),\infty)$ with $t\in[0,T_\delta]$ given and fixed, consider the stopping time
 \begin{align} \label{3.52} \hs{-2pc}
\sigma_{b,c}=\sigma_{b,c}(t,x)=\inf\ \{\ 0\leq s\leq T_{\delta}\m t:b(t\p s)\le X^x_{s}\le c(t\p s)\ \}.
 \end{align}
Using that $U^{b,c}(t,b(t))=G(t,b(t))$ and $U^{b,c}(t,c(t))=G(t,c(t))$ for all $t\in [0,T_\delta)$ and $U^{b,c}(T_\delta,x)=G(T_\delta,x)$ for all $x> 0$, we get $U^{b,c}(t\p\sigma_{b,c},X^x_{\sigma_{b,c}})=G(t\p\sigma_{b,c},X^x_{\sigma_{b,c}})$ $\PP$-a.s.~by continuity of $U^{b,c}$. Hence
 from \eqref{3.8} and \eqref{3.50} using the optional sampling theorem and noting that $L^K_u(X^x)=0$ for $u\le\sigma_{b,c}$ we find
 \begin{align*} \hs{-2pc}%\label{3.53}
U^{b,c}(t,x)=\;&\EE e^{-r\sigma_{b,c}}U^{b,c}(t\p \sigma_{b,c},X^x_{\sigma_{b,c}})\\
&-\EE\int_0^{\sigma_{b,c}} e^{-ru}H(t\p u,X^x_u)I(X^x_u\le b(t\p u)\;\text{or}\;X^x_u\ge c(t\p u))du\nonumber\\
=\;&\EE e^{-r\sigma_{b,c}}G(t\p \sigma_{b,c},X^x_{\sigma_{b,c}})%\nonumber\\
-\EE\int_0^{\sigma_{b,c}} e^{-ru}H(t\p u,X^x_u)du=G(t,x)\nonumber
\end{align*}
 since $X^x_u \in (0,b(t\p u))\cup (c(t\p u),\infty)$ for all $u\in [0,\sigma_{b,c})$.
 \vs{6pt}

\emph{Step 2.3}. Next we prove that $U^{b,c}(t,x)\le V^{(2)}(t,x)$ for all $(t,x)\in[0,T_\delta]\times(0,\infty)$.
 For this consider the stopping time
 \begin{align*} \hs{1pc}%\label{3.54}
\tau_{b,c}=\tau_{b,c}(t,x)=\inf\ \{\ 0\leq s\leq T_{\delta}\m t:X^x_s\le b(t\p s)\;\text{or}\;X^x_s\ge c(t\p s)\ \}
 \end{align*}
with $(t,x)\in[0,T_\delta]\times(0,\infty)$ given and fixed. Again arguments as those
following \eqref{3.52} above show that $U^{b,c}(t\p\tau_{b,c},X^x_{\tau_{b,c}})=G(t\p\tau_{b,c},X^x_{\tau_{b,c}})$ $\PP$-a.s. Then taking $s=\tau_{b,c}$ in \eqref{3.50} and using the optional sampling theorem, we get
 \begin{align*}\hs{1pc}%\label{3.55}
U^{b,c}(t,x)=\EE e^{-r\tau_{b,c}}U^{b,c}(t\p \tau_{b,c},X^x_{\tau_{b,c}})=
\EE e^{-r\tau_{b,c}}G(&t\p \tau_{b,c},X^x_{\tau_{b,c}})\le V^{(2)}(t,x).
\end{align*}
% \vs{6pt}

\emph{Step 2.4}. In order to compare the couples $(b,c)$ and $(b^{(2)}, c^{(2)})$ we initially prove that $b(t)\ge b^{(2)}(t)$ and $c(t)\le c^{(2)}(t)$ for $t\in[0,T_\delta]$. For this, suppose that there exists $t\in[0,T_\delta)$ such that $c(t)> c^{(2)}(t)$, take a point $x\in [c(t),\infty)$ and consider the stopping time
  \begin{align*}\hs{1pc} %\label{3.56}
\sigma=\sigma(t,x)=\inf\ \{\ 0\leq s\leq T_{\delta}\m t:b^{(2)}(t\p s)\le X^x_{s}\le c^{(2)}(t\p s)\ \}.
 \end{align*}
Setting $s=\sigma$ in \eqref{3.48} and \eqref{3.50} and using the optional sampling theorem, we get
\begin{align} \label{3.57} \hs{1pc}
&\EE e^{-r\sigma} V^{(2)}(t\p \sigma,X^x_\sigma)=V^{(2)}(t,x) +\EE\int_0^\sigma e^{-ru}H(t\p u,X^x_u)
du\\
\label{3.58}&\EE e^{-r\sigma}U^{b,c}(t\p \sigma,X^x_\sigma)=U^{b,c}(t,x)\\
&\qquad\qquad+\EE\int_0^{\sigma} e^{-ru}H(t\p u,X^x_u)I\big(X^x_u \le b(t\p u)\;\text{or}\;X^x_u\ge c(t\p u)\big)du\nonumber.
\end{align}
Since $U^{b,c}\le V^{(2)}$ and $V^{(2)}(t,x)=U^{b,c}(t,x)=G(t,x)$ for
$x\in[c(t),\infty)$, it follows by subtracting \eqref{3.58} from \eqref{3.57} that
\begin{align} \label{3.59} \hs{1pc}
\EE\int_0^{\sigma} e^{-ru}H(t\p u,X^x_u)I\big(b(t\p u)\le X^x_u\le c(t\p u)\big)du\ge0.
\end{align}
The function $H$ is always strictly negative and by the continuity of $c^{(2)}$
and $c$ it must be $\PP(\sigma(t,x)>0)=1$, hence \eqref{3.59} leads to a contradiction and we can conclude that $c(t)\le c^{(2)}(t)$ for all $t\in [0,T_\delta]$. Arguing in a similar way one can also derive that
$b(t)\ge b^{(2)}(t)$ for all $t\in [0,T_\delta]$ as claimed.
\vs{6pt}

\emph{Step 2.5}. To conclude the proof we show that $b=b^{(2)}$ and $c=c^{(2)}$ on $[0,T_\delta]$.
For that, let us assume that there exists $t\in[0,T_\delta)$ such that $b(t)>b^{(2)}(t)$ or $c(t)<c^{(2)}(t)$. Choose an arbitrary point $x\in (b^{(2)}(t),b(t))$ or alternatively $x\in(c(t),c^{(2)}(t))$ and
consider the optimal stopping time $\tau^*$ of \eqref{3.11} with $D^{(2)}$ as in \eqref{3.39}. Take $s=\tau^*$ in \eqref{3.48} and \eqref{3.50} and use the optional sampling theorem to get
 \begin{align} \label{3.60} \hs{1pc}
&\EE e^{-r\tau^*} G(t\p \tau^*,X^x_{\tau^*})=V^{(2)}(t,x)\\
\label{3.61}&\EE e^{-r{\tau^*}}G(t\p {\tau^*},X^x_{\tau^*})=U^{b,c}(t,x)\\
&\qquad\qquad+\EE\int_0^{{\tau^*}} e^{-ru}H(t\p u,X^x_u)I\big(X^x_u \le b(t\p u)\;\text{or}\;X^x_u\ge c(t\p u)\big)du\nonumber
\end{align}
where we use that $V^{(2)}(t\p \tau^*,X^x_{\tau^*})=G(t\p \tau^*,X^x_{\tau^*})=U^{b,c}(t\p \tau^*,X^x_{\tau^*})$ $\PP$-a.s.~upon recalling that $b\ge b^{(2)}$ and $c\le c^{(2)}$, and $U^{b,c}=G$ either below $b$ and above $c$ (cf.~step 2.2 above) or at $T_\delta$. Since $U^{b,c}\le V^{(2)}$ then subtracting \eqref{3.60} from \eqref{3.61} we get
 \begin{align*}  \hs{2pc}%\label{3.62}
\EE\int_0^{{\tau^*}} e^{-ru}H(t\p u,X^x_u)I\big(X^x_u \le b(t\p u)\;\text{or}\;X^x_u\ge c(t\p u)\big)du\ge 0.
\end{align*}
Again we recall that $H$ is always strictly negative and by continuity of
$b^{(2)}$, $c^{(2)}$, $b$ and $c$ we have $\PP(\tau^*(t,x)>0)=1$ and the process $(X^{x}_u)_{u\in[0,T_\delta-t]}$ spends a strictly positive amount of time either below $b(t\p\, \cdot \,)$ if it starts from $x\in\big(b^{(2)}(t),b(t)\big)$ or above $c(t\p \, \cdot \,)$ if it starts from $x\in\big(c(t),c^{(2)}(t)\big)$, with probability one. Therefore we reach a contradiction unless $b=b^{(2)}$ and $c=c^{(2)}$.
\end{proof}

It is worth observing that the pricing formula \eqref{3.45} is consistent with the economic intuition behind the structure of the swing contract and it includes a European part plus three integral terms accounting for the early exercise premia. The first of such terms is similar to the one appearing in the American put price formula and it represents the value produced by a single exercise when the put payoff is strictly positive.
The second and third terms instead are related to the extra value produced by the multiple exercise opportunity. These premia are weighted with the discounted probability of the price process falling below $b^{(1)}$ after the refracting period has elapsed and they account for both the cases when the first right is exercised below the strike $K$ or above it, respectively.

Notice that, as anticipated in Remark \ref{rem:optexn.1}, if $r = 0$ the early exercise premia disappear. In that case there is no time value of money and the swing contract is equivalent to a portfolio of two European options: one with maturity at time $T_\delta$ and the other at time $T$. In Figure $2$ we show optimal boundaries evaluated numerically for some values of $r$ and observe that the stopping region increases as the interest rate increases (i.e.~the continuation set shrinks as $r$ increases).

It is well known that for $\delta=0$ the value of the swing contract with $n$ rights equals the value of a portfolio of $n$ American put options with maturity at $T$. In that setting it is optimal to exercise all rights at the same time as soon as $X_t$ falls below $b^{(1)}$. Intuitively we expect that as $\delta\to 0$ and $T_\delta\to T$, the lower boundary $b^{(2)}$ tends to the boundary of the American put $b^{(1)}$ while the upper boundary $c^{(2)}$ increases and becomes steeper near the maturity, eventually converging to the vertical half line $\{T\}\times(K,\infty)$ in the limit (see Figure 3 for numerical illustrations of this observation).

\begin{remark}\label{rem:numerics}
We compute the optimal boundaries of Figures $1$, $2$, $3$ and $4$ by solving numerically the integral equations \eqref{3.46} and \eqref{3.47} (see also \eqref{inteq-n0} and \eqref{inteq-n1} in the next section). We use a backwards scheme based on a discretisation of the integrals with respect to time. This is a standard method for this kind of equations and a more detailed description can be found for example in Remark 4.2 of \cite{DuTPe}. Note that in order to implement the algorithm it is crucial to know the values of $c^{(2)}(T_\delta)$ and $b^{(2)}(T_\delta)$.
\end{remark}

%%%%%%%%%%%%%%%%%%%%%%%%%%%%%%%%%%%%%%%%%%%%%%%%%%%%%%%%%%%%%%%%%%%%%%%%%%%%%%%
%%% Figure 2 %%%
%%%%%%%%%%%%%%%%%%%%%%%%%%%%%%%%%%%%%%%%%%%%%%%%%%%%%%%%%%%%%%%%%%%%%%%%%%%%%%%
\begin{figure}[t]
\label{fig:2}
\begin{center}
\includegraphics[scale=0.5]{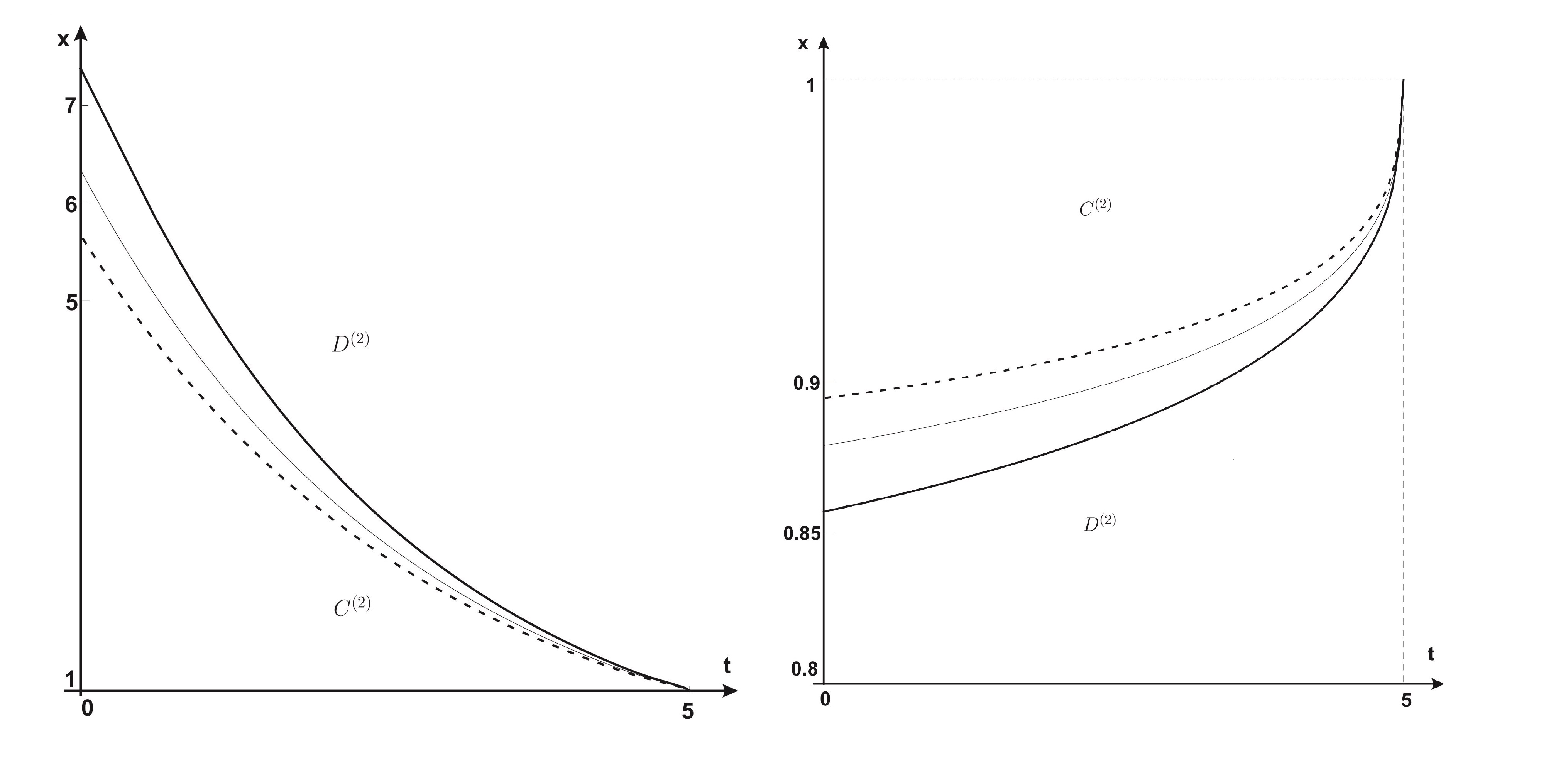}
\end{center}

{\par \leftskip=1.6cm \rightskip=1.6cm \small \ni \vs{-10pt}

\textbf{Figure 2.} Computer drawings show how the upper optimal exercise boundary
$t\mapsto c^{(2)}(t)$ (on the left) and the lower optimal exercise boundary $t\mapsto b^{(2)}(t)$ (on the right)
change as one varies the annual interest rate $r$. The set of parameters is $K=1$, $\sigma=0.4$ (annual), $T=6$ months, $\delta=1$ month
and the boundaries refer to the following values of the interest rate: $r=0.05$ (bold line); $r=0.075$ (thin line); $r=0.1$ (dashed line). The stopping region is increasing in $r$. Note that we use different scales on the vertical axes.

\par} \vs{10pt}

\end{figure}

%%%%%%%%%%%%%%%%%%%%%%%%%%%%%%%%%%%%%%%%%%%%%%%%%%%%%%%%%%%%%%%%%%%%%%%%%%%%%%%
%%% Figure 3 %%%
%%%%%%%%%%%%%%%%%%%%%%%%%%%%%%%%%%%%%%%%%%%%%%%%%%%%%%%%%%%%%%%%%%%%%%%%%%%%%%%
\begin{figure}[t]
%\label{fig:2}
\begin{center}
\includegraphics[scale=0.4]{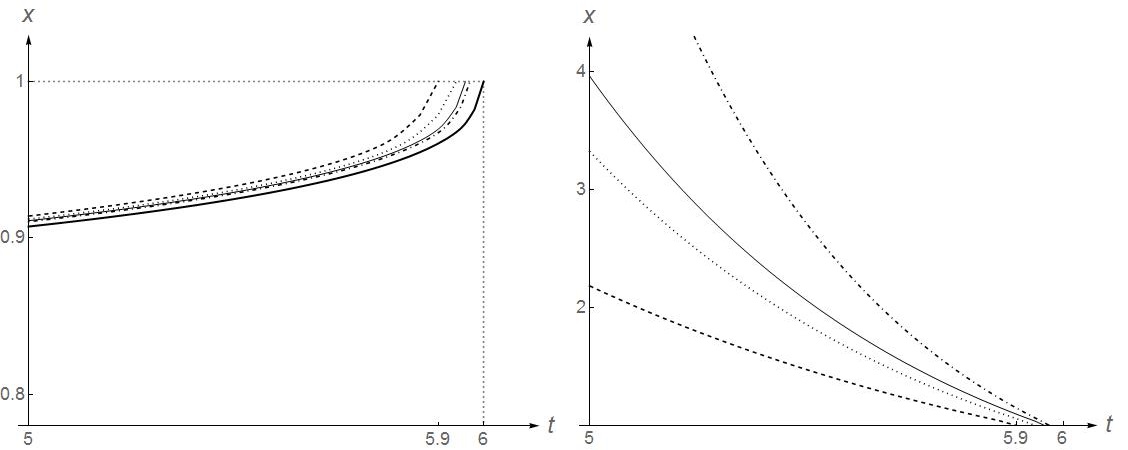}
\end{center}

{\par \leftskip=1.6cm \rightskip=1.6cm \small \ni \vs{-10pt}

\textbf{Figure 3.} Computer drawings show how the optimal exercise boundaries
$t\mapsto b^{(2)}(t)$ and $t\mapsto c^{(2)}(t)$
change as the refracting period $\delta$ goes to 0. The set of parameters is $K=1$, $\sigma=0.4$ (annual), $T=6$ months, $r=0.05$ (annual)
and the boundaries refer to the following values of the refracting period: $\delta=0.1$ (dashed); $\delta=0.06$ (dotted); $\delta=0.04$ (thin); $\delta=0.03$ (dash-dotted). The thick line represents the American put option boundary $t\mapsto b^{(1)}(t)$.
\par} \vs{10pt}

\end{figure}
%%%%%%%%%%%%%%%%%%%%%%%%%%%%%%%%%%%%%%%%%%%%%%%%%%%%%%%%%%%%%%%%%%%%%%%%%%
\subsection{Analysis of the swing option with arbitrary many rights}\label{sec:gen}

In this section we complete our study of the multiple optimal stopping problem \eqref{eq:V1bis} by dealing with the general case of $n$ admissible stopping times. The results follow by induction and we will only sketch their proofs as they are obtained by repeating step by step arguments as those presented in Section \ref{sec:n=2}. Let us start by introducing some notation. For $n\in\mathbb{N}$, $n\ge 2$ we denote $C^{(n)}$ and $D^{(n)}$ the continuation and stopping region, respectively, of the problem with value function $V^{(n)}$ (cf.~\eqref{def:OS00}). Similarly we denote their $t$-sections by $C^{(n)}_t$ and $D^{(n)}_t$ and to simplify notation we set $T^{(n)}_\delta=T\m n\delta$.

From now on we fix $n\ge 2$ and make some assumptions that will be needed to obtain properties of $G^{(n+1)}$, $V^{(n+1)}$ and the relative optimal boundaries. Notice that each one of the following assumptions hold for $n=2$.
\begin{ass}\label{ass:induction}
%For some fixed $n\ge 2$
For $j\in\{2,3,\ldots n\}$ and $t\in[0,T^{(j-1)}_\delta]$ one has $D^{(j)}_t=(0,b^{(j)}(t)]\cup[c^{(j)}(t),\infty)$ where
\begin{itemize}
\item[  i)] $t\mapsto b^{(j)}(t)$ is continuous, bounded and increasing with $b^{(j)}\big(T^{(j-1)}_\delta\big)=K$,
\item[ ii)] $t\mapsto c^{(j)}(t)$ is continuous, bounded and decreasing with $c^{(j)}\big(T^{(j-1)}_\delta\big)=K$,
\item[iii)] $b^{(j-1)}(t)\le b^{(j)}(t)< K< c^{(j)}(t)\le c^{(j-1)}(t)$ for $t\in[0,T^{(j-1)}_\delta)$, with the convention $c^{(1)}\equiv+\infty$.
\end{itemize}
\end{ass}
%In Section \ref{sec:n=2} we have shown indeed that Assumption \ref{ass:induction} holds for $n=2$.
Now for $1\le j\le n-1$ we also define the random variables
\begin{align}
\label{eq:defInj} I^{(n)}_j(t,s):=I\big(X_s&\in D^{(n)}_{t+s},X_{s+\delta}\in D^{(n-1)}_{t+s+\delta},\ldots\\
&\ldots,X_{s+(j-1)\delta}\in D^{(n-(j-1))}_{t+s+(j-1)\delta}, X_{s+j\delta}<b^{(n-j)}(t\p s\p j\delta)\big)\,\nonumber\\[+3pt]
\label{eq:defIn0}I^{(n)}_0(t,s):=I\big(X_s&< b^{(n)}(t\p s)\big)\
\end{align}
whose expected values are instead denoted by
\begin{align}
\label{eq:defpnj} p^{(n)}_j(t,x,s):=\EE_x \big[I^{(n)}_j(t,s)\big]\quad\text{and}\quad p^{(n)}_0(t,x,s):=\EE_x\big[I^{(n)}_0(t,s)\big].
\end{align}
Under Assumption \ref{ass:induction} one has
\begin{align}\label{eq:probs}\hs{+4pc}
p^{(n)}_j(t,x,s)-p^{(n-1)}_j(t,x,s)\ge 0
\end{align}
for $t\in[0,T^{(n-1)}_\delta]$, $(x,s)\in(0,\infty)\times[0,T^{(n-1)}_\delta\m\,t]$ and $j=0,\ldots,n-2$ since $D^{(1)}\subseteq D^{(2)}\subseteq\ldots\subseteq D^{(n)}$.
Let us recall definition \eqref{def:payoff-n01} in order to introduce the next
\begin{ass}\label{ass:induction2}
It holds $G^{(n)}\in C^{1,2}$ in $(0,T^{(n-1)}_\delta)\times[(0,K)\cup(K,\infty)]$ with
\begin{align}\label{LGn}%\hs{-1pc}
\big(G^{(n)}_t\p\L_XG^{(n)}\m rG^{(n)}\big)(t,x)
=-rK\big(I&(x<K)+\sum^{n-2}_{j=0}e^{-r(j+1)\delta}p^{(n-1)}_j(t,x,\delta)\big)
\end{align}
for $t\in(0,T^{(n- 1)}_\delta)$ and $x\in(0,K)\cup(K,\infty)$.
Moreover $V^{(n)}$ is continuous on $[0,T^{(n-1)}_\delta]\times(0,\infty)$, $V^{(n)}\in C^{1,2}$ in $C^{(n)}$, and it solves
\begin{align}\label{eq:Vn}\hs{+4pc}
V^{(n)}_t+\L_XV^{(n)}-rV^{(n)}=0\quad\text{in $C^{(n)}$}.
\end{align}
Finally, for $s\in[0,T^{(n-1)}_\delta-t]$ and $x\in(0,\infty)$ it holds $\PP_x$-a.s.
\begin{align}\label{itoVn}
e^{-rs}V^{(n)}(t\p s,X_s)=V^{(n)}(t,x)\m rK\sum^{n-1}_{j=0}\int_0^s{e^{-r(u+j\delta)}\EE_{x}\big[I^{(n)}_j(t,u)\big|\cF_u\big]du}\p M^{(n)}_{t+s}
\end{align}
where $(M^{(n)}_t)_{t}$ is a martingale.
\end{ass}
Notice that for $n=2$ \eqref{LGn} is equivalent to \eqref{ltsf-3} whereas \eqref{itoVn} is equivalent to \eqref{3.48}.
\begin{prop}\label{prop:LGn+1}
Under Assumptions \ref{ass:induction} and \ref{ass:induction2} the equation \eqref{LGn} also holds with $n$ replaced by $n+1$.
\end{prop}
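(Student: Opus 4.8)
The plan is to reproduce the proof of Proposition \ref{rem:reguC^2}, now using the representation of $V^{(n)}$ provided by \eqref{itoVn} in Assumption \ref{ass:induction2} in place of the change-of-variable formula \eqref{chvarV}. By \eqref{def:payoff-n01}--\eqref{def:payoff-n02} one has $G^{(n+1)}(t,x)=(K\m x)^++R^{(n+1)}(t,x)$ with $R^{(n+1)}(t,x)=e^{-r\delta}\EE_x\big[V^{(n)}(t\p\delta,X_\delta)\big]$ on $[0,T^{(n)}_\delta]\times(0,\infty)$; since $\big(\partial_t+\L_X-r\big)(K\m x)^+=-rK\,I(x<K)$ for every $x\ne K$ (direct computation), it suffices to prove
\[
\big(\partial_t+\L_X-r\big)R^{(n+1)}(t,x)=-rK\sum_{j=0}^{n-1}e^{-r(j+1)\delta}\,p^{(n)}_j(t,x,\delta),\qquad(t,x)\in(0,T^{(n)}_\delta)\times(0,\infty).
\]

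First I would establish that $R^{(n+1)}\in C^{1,2}\big((0,T^{(n)}_\delta)\times(0,\infty)\big)$. Writing $R^{(n+1)}(t,x)=e^{-r\delta}\int_0^\infty V^{(n)}(t\p\delta,y)\,q_\delta(x,y)\,dy$, where $q_\delta$ is the smooth, strictly positive log-normal transition density of $X$ over a time step $\delta$, one differentiates under the integral sign: the $x$-derivatives are controlled by the Gaussian-type decay of $q_\delta(x,\cdot)$ and of its $x$-derivatives tested against the bounded continuous map $y\mapsto V^{(n)}(t\p\delta,y)$ (note $0\le V^{(n)}\le nK$), while for the $t$-derivative one uses that $V^{(n)}_t$ exists and is locally bounded off the two negligible curves $\{x=b^{(n)}(t)\}$ and $\{x=c^{(n)}(t)\}$ --- by \eqref{eq:Vn} inside $C^{(n)}$ and because $V^{(n)}=G^{(n)}\in C^{1,2}$ inside $D^{(n)}$ --- so the singular set does not contribute to the integral. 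These are exactly the arguments of the Appendix proof of Proposition \ref{rem:reguC^2}, and I would only sketch them.

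With the regularity in hand, fix $(t,x)$ with $0<t<T^{(n)}_\delta$ and take $s\in(0,T^{(n)}_\delta-t)$. The Markov property gives $e^{-rs}R^{(n+1)}(t\p s,X_s)=\EE_x\big[e^{-r(s+\delta)}V^{(n)}(t\p s\p\delta,X_{s+\delta})\big|\cF_s\big]$ $\PP_x$-a.s., hence $\EE_x\big[e^{-rs}R^{(n+1)}(t\p s,X_s)\big]=\EE_x\big[e^{-r(s+\delta)}V^{(n)}(t\p s\p\delta,X_{s+\delta})\big]$. Applying \eqref{itoVn} once with horizon $s+\delta$ and once with horizon $\delta$, subtracting the two identities and taking $\EE_x$ (the martingale increments vanish by optional sampling, and $\EE_x\int e^{-r(u+j\delta)}\EE_x[I^{(n)}_j(t,u)|\cF_u]\,du=\int e^{-r(u+j\delta)}p^{(n)}_j(t,x,u)\,du$ by Fubini's theorem and the tower property, cf.~\eqref{eq:defpnj}) gives
\[
\EE_x\big[e^{-rs}R^{(n+1)}(t\p s,X_s)\big]-R^{(n+1)}(t,x)=-rK\sum_{j=0}^{n-1}e^{-rj\delta}\int_\delta^{s+\delta}e^{-ru}\,p^{(n)}_j(t,x,u)\,du.
\]
On the left, since $R^{(n+1)}$ is bounded with bounded $x$-derivative (argue as in Proposition \ref{prop:contV}, so the dominating stochastic integral in It\^o's formula is a true martingale), Dynkin's formula makes the incremental ratio converge to $\big(\partial_t+\L_X-r\big)R^{(n+1)}(t,x)$ as $s\downarrow 0$; on the right, dividing by $s$ and letting $s\downarrow 0$ gives $-rK\sum_{j=0}^{n-1}e^{-r(j+1)\delta}p^{(n)}_j(t,x,\delta)$, because $u\mapsto p^{(n)}_j(t,x,u)$ is continuous at $u=\delta$ (the paths of $X$ and the boundaries $b^{(\cdot)},c^{(\cdot)}$ are continuous and $X_u$ admits a density, so the indicators defining $I^{(n)}_j$ converge $\PP_x$-a.s.). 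Combining the two limits with the contribution of $(K\m x)^+$ yields \eqref{LGn} with $n$ replaced by $n+1$.

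The main obstacle is the joint $C^{1,2}$ regularity of $R^{(n+1)}$: because $V^{(n)}$ is of class $C^{1,2}$ only separately in $C^{(n)}$ and in $D^{(n)}$ (and merely $C^1$ in $x$ across $b^{(n)}$ and $c^{(n)}$), the $t$-differentiability of $R^{(n+1)}$ cannot be read off directly and has to be recovered through the smoothing effect of the log-normal density exactly as in Proposition \ref{rem:reguC^2}. Everything else is routine bookkeeping, entirely parallel to the case $n=2$.
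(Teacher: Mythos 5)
Your overall plan — reduce the claim to a PDE for $R^{(n+1)}$, compute $\EE_x\big[e^{-rs}R^{(n+1)}(t\p s,X_s)\big]-R^{(n+1)}(t,x)$ via the Markov property together with \eqref{itoVn}, and then take $s\downarrow 0$ — agrees with the paper's computation, and the identity you derive on the right-hand side is correct. The gap is in your step $1$, the claim that $R^{(n+1)}\in C^{1,2}$ follows by differentiating $R^{(n+1)}(t,x)=e^{-r\delta}\int V^{(n)}(t\p\delta,y)q_\delta(x,y)\,dy$ under the integral sign in $t$. You justify this by saying that $V^{(n)}_t$ exists and is locally bounded away from the curves $b^{(n)}$ and $c^{(n)}$. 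That is true on each fixed compact set that avoids the curves, but it does not give a dominating function for the difference quotients $\big(V^{(n)}(t\p h\p\delta,y)-V^{(n)}(t\p\delta,y)\big)/h$ on a neighbourhood that \emph{straddles} the curves, and nothing in Assumptions \ref{ass:induction}--\ref{ass:induction2} rules out blow-up of $V^{(n)}_t$ (equivalently, of $V^{(n)}_{xx}$, via \eqref{eq:Vn}) as $(t,x)$ approaches the free boundary from inside $C^{(n)}$. Dominated convergence is therefore not available, and the $C^{1,2}$ regularity of $R^{(n+1)}$ is exactly the nontrivial point you cannot assume.

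You also describe this as being ``exactly the arguments of the Appendix proof of Proposition \ref{rem:reguC^2},'' but that proof does something different, and this is the key idea you are missing. The Appendix never differentiates under the integral sign. It first establishes directly that
\[
Y_s:=e^{-rs}R(t\p s,X_s)-rK\int_0^s e^{-ru}f(t\p u,X_u)\,du
\]
is a continuous martingale (using only continuity and the Markov property), then considers the parabolic Cauchy problem $u_t+\L_Xu-ru=-rKf$ on a rectangle $\cD$ with boundary data $u=R$ on $\partial_P\cD$, which by classical PDE theory has a unique classical solution $u_\cD\in C^{1,2}(\cD)\cap C(\overline\cD)$, and finally shows $u_\cD=R$ on $\cD$ by Dynkin's formula combined with the martingale property. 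The $C^{1,2}$ regularity of $R$ is thus \emph{deduced} from PDE theory, not checked by direct differentiation. The paper's proof of Proposition \ref{prop:LGn+1} runs the same way: it shows that
\[
Y_s:=e^{-rs}R^{(n+1)}(t\p s,X_s)+rK\sum_{j=0}^{n-1}\int_0^s e^{-r(u+(j+1)\delta)}p^{(n)}_j(t\p u,X_u,\delta)\,du
\]
is a continuous martingale (via the Markov property, \eqref{itoVn}, and the tower identity $\EE_x[I^{(n)}_j(t,u\p\delta)]=\EE_x\,p^{(n)}_j(t\p u,X_u,\delta)$), and then invokes the Cauchy-problem argument from the Appendix to conclude both the $C^{1,2}$ regularity of $R^{(n+1)}$ and the stated PDE simultaneously. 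To close the gap in your proof you should replace the direct-differentiation step by this martingale-plus-Cauchy-problem argument, which avoids any need to control $V^{(n)}_t$ near the free boundary.
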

\begin{proof}
Since
\begin{align*}%\label{Gn00}
G^{(n+1)}(t,x)=&(K-x)^++R^{(n+1)}(t,x)
\end{align*}
with $R^{(n+1)}(t,x)=\EE e^{-r\delta} V^{(n)}(t\p\delta, X^x_\delta)$, it is then sufficient to prove that
\begin{align*}
Y_s:=e^{-rs}R^{(n+1)}(t\p s,X_s)+rK\sum^{n-1}_{j=0}\int_0^s{e^{-r(u+(j+1)\delta)}p^{(n)}_j(t\p u,X_u,\delta)du}
\end{align*}
is a continuous martingale. Then we can argue as in the proof of Proposition \ref{rem:reguC^2} to conclude that $R^{(n+1)}$ is $C^{1,2}$ and
\begin{align*}
\big(R^{(n+1)}_t+\L_XR^{(n+1)}-rR^{(n+1)}\big)(t,x)=-rK\sum^{n-1}_{j=0}e^{-r(j+1)\delta}p^{(n)}_j(t,x,\delta).
\end{align*}

Continuity of $s\mapsto Y_s$ follows from continuity and boundedness of $V^{(n)}$ and $p^{(n)}_j$ for all $j$'s. Markov property and \eqref{itoVn} (see also the proof of Proposition \ref{rem:reguC^2} for the details) give
\begin{align*}
\EE_xe^{-rs}R^{(n+1)}(t\p s,X_s)=&R^{(n+1)}(t,x)-rK\sum^{n-1}_{j=0}\EE_x\int_0^se^{-r(u+(j+1)\delta)}I^{(n)}_j(t,u+\delta)du\\
=&R^{(n+1)}(t,x)-rK\sum^{n-1}_{j=0}\EE_x\int_0^se^{-r(u+(j+1)\delta)}\EE_{X_u} \big[I^{(n)}_j(t+u,\delta)\big]du\\
=&R^{(n+1)}(t,x)-rK\sum^{n-1}_{j=0}\EE_x\int_0^se^{-r(u+(j+1)\delta)}p^{(n)}_j(t+u,X_u,\delta)du
\end{align*}
where we have used that $\EE_{x} \big[I^{(n)}_j(t,u+\delta)\big]=\EE_x \EE_{X_u} \big[I^{(n)}_j(t+u,\delta)\big]=\EE_x p^{(n)}_j(t+u,X_u,\delta)$ by \eqref{eq:defInj}.

\end{proof}

We now define
\begin{align}\label{eq:defHn}\hs{-2pc}
H^{(n)}(t,x):= \big(G^{(n)}_t+&\L_XG^{(n)}-rG^{(n)}\big)(t,x)\quad\text{for $t\in(0,T^{(n)}_\delta)$, $x\in(0,K)\cup(K,\infty)$}
\end{align}
and observe that under Assumption \ref{ass:induction} the map $t\mapsto H^{(n)}(t,x)$ is decreasing for all $x>0$. This was also the case for $H$ in \eqref{ltsf-3} and it was the key property needed to prove most of our results in Section \ref{sec:n=2}. We are now ready to provide the EEP representation formula of $V^{(n)}$ for $n>2$ and to characterise the corresponding stopping sets $D^{(n)}$.
\begin{theorem}\label{thm:bdr-n}
For fixed $n\ge 2$ let Assumptions \ref{ass:induction} and \ref{ass:induction2} hold true. Then the same assumptions hold for $n+1$ and, for $t\in[0,T^{(n)}_\delta]$ and $x\in (0,\infty)$, the value function $V^{(n+1)}$ of \eqref{def:OS00} has the following representation
\begin{align}\label{eep-n}\hs{-2pc}
V^{(n+1)}(t,x)=e^{-r(T^{(n)}_\delta-t)}J^{(n+1)}(t,x)+rK\sum^{n}_{j=0}\int_0^{T^{(n)}_\delta-t}{e^{-r(u+j\delta)}p^{(n+1)}_j(t,x,u)du}
\end{align}
with
\begin{align*}
J^{(n+1)}(t,x):=\EE_x \Big[ G^{(n+1)}\Big(T^{(n)}_\delta,X_{T^{(n)}_\delta-t}\Big)\Big].
\end{align*}
\end{theorem}

%%%%%%%%%%%%%%%%%%%%%%%%%%%%%%%%%%%%%%%%%%%%%%%%%%%%%%%%%%%%%%%%%%%%%%%%%%%%%%%
%%% Figure 3 %%%
%%%%%%%%%%%%%%%%%%%%%%%%%%%%%%%%%%%%%%%%%%%%%%%%%%%%%%%%%%%%%%%%%%%%%%%%%%%%%%%
\begin{figure}[t]
\label{fig:3}
\begin{center}
\includegraphics[scale=0.4]{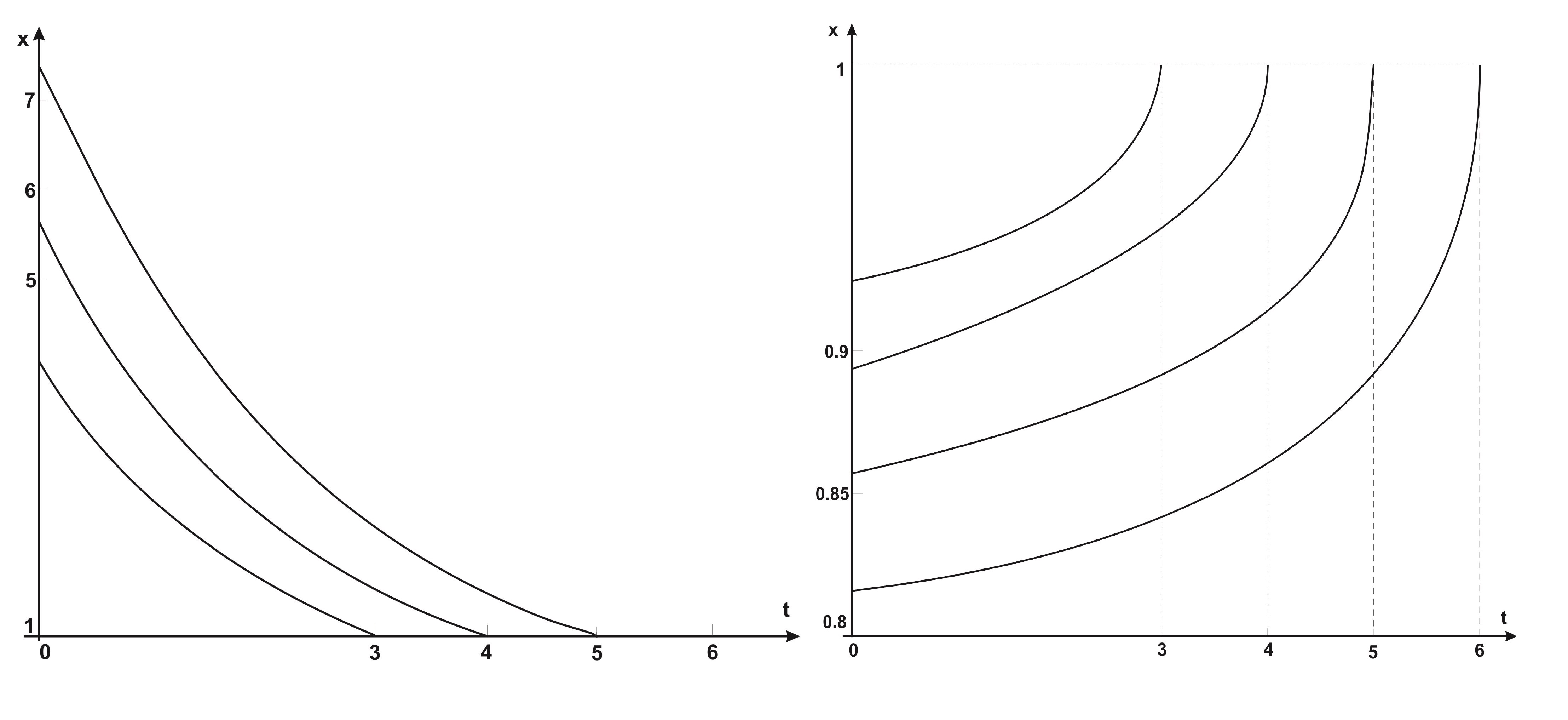}
\end{center}

{\par \leftskip=1.6cm \rightskip=1.6cm \small \ni \vs{-10pt}

\textbf{Figure 4.}
Structure of the upper optimal exercise boundaries
$t\mapsto c^{(n)}(t)$  for $n=2,3,4$ (on the left) and the lower optimal exercise boundaries $t\mapsto b^{(n)}(t)$  for $n=1,2,3,4$ (on the right)
in the case $K=1$, $r=0.05$ (annual), $\sigma=0.2$ (annual), $T=6$ months, $\delta=1$ month. Note that the scales on the vertical axes are different.
\par} \vs{10pt}

\end{figure}

\begin{proof}
By Proposition \ref{prop:LGn+1} we obtain that \eqref{LGn} holds with $n$ replaced by $n+1$ and $H^{(n+1)}$ is well defined (cf.~\eqref{eq:defHn}). Now we repeat step by step (with obvious modifications) the arguments used in Section \ref{sec:n=2} to obtain generalisations of Propositions \ref{prop:contV}, \ref{prop:D2D1}, \ref{prop:incrCD}, \ref{prop:finite-c} and Theorems \ref{thm:b2c2} and \ref{thm:continuity} to the case $n>2$. We observe that some proofs simplify as the generalisation of Proposition \ref{prop:D2D1} (which uses \eqref{eq:probs}) immediately implies finiteness of $c^{(n+1)}$ due to finiteness of $c^{(2)}$ and hence $D^{(n+1)}_t\cap (K,\infty)\neq\emptyset$ for $t\in[0,T^{(n)}_\delta]$. Then for the swing option problem with $n+1$ exercise rights there exist two optimal stopping boundaries $b^{(n+1)}$ and $c^{(n+1)}$ which fulfill Assumption \ref{ass:induction} with $n+1$ instead of $n$ (notice that the proof of Theorem \ref{thm:continuity} does not rely on the smooth-fit property).

It remains to prove that the EEP representation formula for $V^{(n+1)}$ holds. Following the same arguments as in the proof of Proposition \ref{prop:smooth-fit} it is possible to show that $V^{(n+1)}_x(t,\,\cdot\,)$ is continuous across $b^{(n+1)}(t)$ and $c^{(n+1)}(t)$ for all $t\in(0,T^{(n)}_\delta)$. Then $V^{(n+1)}$ solves a free-boundary problem analogous to \eqref{3.31}--\eqref{3.36} but with $V^{(2)}$, $G^{(2)}$, $b^{(2)}$, $c^{(2)}$ and $T_\delta$ replaced by $V^{(n+1)}$, $G^{(n+1)}$, $b^{(n+1)}$, $c^{(n+1)}$ and $T^{(n)}_\delta$ respectively. Now $V^{(n+1)}$, $b^{(n+1)}$ and $c^{(n+1)}$ satisfy all the conditions needed to apply the local time-space formula of \cite{Pe-1} (cf.~also proof of Theorem \ref{thm:eep2} above), hence by using
\begin{align*}%\hs{+2pc}
&\EE_{X_u}\big[I^{(n)}_j(t+u,\delta)\big]\,I(X_u\in D^{(n+1)}_{t+u})=\EE_x\big[I^{(n+1)}_{j+1}(t,u)\big|\cF_u\big]\\[+4pt]
&I(X_u<K)I(X_u\in D^{(n+1)}_{t+u})=I(X_u<b^{(n+1)}(t+u))
\end{align*}
we obtain
\begin{align*}
e^{-r s}&V^{(n+1)}(t\p s,X^x_s)\\
=&V^{(n+1)}(t,x)+\int_0^s{e^{-ru}H^{(n+1)}(t\p u,X^x_u)I(X^x_u\in D^{(n+1)}_{t+u})du}+M^{(n+1)}_{t+s}\\
=&V^{(n+1)}(t,x)\m rK\sum^{n}_{j=0}\int_0^se^{-r(u+j\,\delta)}\EE_x\big[I^{(n+1)}_{j}(t,u)\big|\cF_u\big]du+M^{(n+1)}_{t+s}
\end{align*}
with $M^{(n+1)}$ a martingale. Hence $V^{(n+1)}$ satisfies \eqref{itoVn} and taking $s=T^{(n)}_\delta\m\, t$ and rearranging terms we obtain the EEP representation for the value of the swing option with $n+1$ exercise rights.
\end{proof}

\begin{coroll}
For any $n\ge 2$ Assumptions \ref{ass:induction} and \ref{ass:induction2} hold true and $V^{(n)}$ has the representation \eqref{eep-n} (with $n$ instead of $n+1$).
\end{coroll}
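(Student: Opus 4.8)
The plan is a straightforward induction on $n\ge 2$: the base case $n=2$ is supplied by the results of Section \ref{sec:n=2}, and the inductive step is precisely Theorem \ref{thm:bdr-n}, so the corollary is essentially a packaging statement.

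For the base case I would verify that Assumptions \ref{ass:induction} and \ref{ass:induction2}, together with the representation \eqref{eep-n} read at $n=2$, hold. Assumption \ref{ass:induction} (for $j=2$) is nothing but the conjunction of Theorem \ref{thm:b2c2}, Proposition \ref{prop:finite-c} and Theorem \ref{thm:continuity}: $D^{(2)}_t=(0,b^{(2)}(t)]\cup[c^{(2)}(t),\infty)$ with $b^{(2)}$ continuous, bounded and increasing and $c^{(2)}$ continuous, bounded and decreasing, both with terminal value $K$, and with $b^{(1)}(t)\le b^{(2)}(t)<K<c^{(2)}(t)\le c^{(1)}(t)\equiv+\infty$. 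Assumption \ref{ass:induction2} at $n=2$ amounts to: the $C^{1,2}$ regularity of $G^{(2)}$ off $\{x=K\}$ and the identity \eqref{ltsf-3} (which, as already noted in the text following Assumption \ref{ass:induction2}, coincides with \eqref{LGn} at $n=2$ once one observes $e^{-r\delta}p^{(1)}_0(t,x,\delta)=f(t,x)$ by \eqref{3.6}); the continuity of $V^{(2)}$ (Proposition \ref{prop:contV}); the $C^{1,2}$ property and the PDE \eqref{eq:Vn} inside $C^{(2)}$ (cf.~\eqref{3.31}); and the It\^o-type identity \eqref{itoVn}, which for $n=2$ is \eqref{3.48} after identifying $\EE_x[I^{(2)}_0(t,u)|\cF_u]=I(X_u<b^{(2)}(t\p u))$ and $\EE_x[I^{(2)}_1(t,u)|\cF_u]=f(t\p u,X_u)\big(I(X_u<b^{(2)}(t\p u))+I(X_u>c^{(2)}(t\p u))\big)$ via \eqref{3.48b}--\eqref{3.48c}. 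Finally, \eqref{eep-n} at $n=2$ is exactly \eqref{3.45} of Theorem \ref{thm:eep2}: unwinding $\sum_{j=0}^{1}e^{-r(u+j\delta)}p^{(2)}_j(t,x,u)$ reproduces the kernel $K(b^{(1)},b^{(2)},c^{(2)}\,;x,t,u)$ of \eqref{defK}, while $J^{(2)}=J$ of \eqref{defJ}.

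Granting the base case, the inductive step is immediate: if Assumptions \ref{ass:induction} and \ref{ass:induction2} hold at level $n$ (the representation \eqref{eep-n} is not even needed as a hypothesis, being itself part of the conclusion of Theorem \ref{thm:bdr-n}), then Theorem \ref{thm:bdr-n} asserts that these assumptions propagate to level $n+1$ and that $V^{(n+1)}$ admits the representation \eqref{eep-n}. Iterating from $n=2$ yields the claim for every $n\ge 2$. The only nontrivial part of the whole argument is the bookkeeping in the base case — unwinding the nested indicators $I^{(n)}_j$ and the finite sums over $j$ at $n=2$ and checking that Assumptions \ref{ass:induction}--\ref{ass:induction2} and formula \eqref{eep-n} genuinely reduce to the already proved statements of Section \ref{sec:n=2} rather than to strictly stronger ones; this is largely a matter of matching notation and was essentially anticipated by the remarks surrounding Assumption \ref{ass:induction2}.
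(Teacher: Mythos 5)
Your proposal is correct and matches the paper's own argument exactly: the paper's proof of the corollary is the same two-step observation — Theorem \ref{thm:bdr-n} gives the propagation of Assumptions \ref{ass:induction}--\ref{ass:induction2} and of \eqref{eep-n} from level $n$ to level $n+1$, and the base case $n=2$ is supplied by the results of Section \ref{sec:n=2} — followed by iteration. Your more explicit bookkeeping verifying that the $n=2$ instances of \eqref{LGn}, \eqref{itoVn} and \eqref{eep-n} coincide with \eqref{ltsf-3}, \eqref{3.48} and \eqref{3.45} is a harmless elaboration of what the paper leaves implicit (note only that the identity you state for $\EE_x[I^{(2)}_1(t,u)\,|\,\cF_u]$ should carry a factor $e^{r\delta}$, which cancels against the $e^{-r(u+\delta)}$ discount in the $j=1$ term of \eqref{itoVn}).
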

\begin{proof}
From Theorem \ref{thm:bdr-n} we learn that if Assumptions \ref{ass:induction} and \ref{ass:induction2} hold for $V^{(n)}$, $G^{(n)}$, $b^{(n)}$ and $c^{(n)}$ then they also hold for $V^{(n+1)}$, $G^{(n+1)}$, $b^{(n+1)}$ and $c^{(n+1)}$. Since we know from the analysis in Section \ref{sec:n=2} that Assumptions \ref{ass:induction} and \ref{ass:induction2} certainly hold in the case $n=2$, the proof is completed by induction.
\end{proof}
\begin{remark}\label{rem:CarTou}
It is worth observing that in the above corollary we have proven $b^{(j-1)}\le b^{(j)}$ and $c^{(j-1)}\ge c^{(j)}$ for all $j\ge 2$, thus answering positively to a theoretical question that was posed in \cite[Sec.~6.4.3]{Car-Tou08}.
\end{remark}

It is now matter of routine to substitute $b^{(n)}(t)$ and $c^{(n)}(t)$ into \eqref{eep-n} to find the integral equations that characterise the optimal boundaries. Arguments analogous to those employed in step 2 of the proof of Theorem \ref{thm:eep2} allow us to show that $b^{(n)}$ and $c^{(n)}$ uniquely solve such equations. For completeness we provide the theorem but we omit its proof. The following expressions will be needed
\begin{align*}
J^{(n)}(t,x):=\EE_{x}\Big[ G^{(n)}\Big(T^{(n-1)}_\delta,X_{T^{(n-1)}_\delta-t}\Big)\Big]\quad\text{and}\quad\Delta^{(n)}_t:=T^{(n-1)}_\delta-t.
\end{align*}
\begin{theorem}
For all $n\in\mathbb{N}$, $n\ge 2$,  the optimal stopping boundaries $b^{(n)}$ and $c^{(n)}$ of Theorem \ref{thm:bdr-n} are the unique couple of continuous functions solving the system of coupled nonlinear integral equations
\begin{align}%\hs{+4pc}
\label{inteq-n0}&G^{(n)}(t,b^{(n)}(t))=e^{-r\Delta^{(n)}_t}J^{(n)}(t,b^{(n)}(t))\p rK\sum^{n-1}_{j=0}\int_0^{\Delta^{(n)}_t}{e^{-r(u+j\delta)}p^{(n)}_j(t,b^{(n)}(t),u)du} \\[+3pt]
\label{inteq-n1}&G^{(n)}(t,c^{(n)}(t))=e^{-r\Delta^{(n)}_t}J^{(n)}(t,c^{(n)}(t))\p rK\sum^{n-1}_{j=0}\int_0^{\Delta^{(n)}_t}{e^{-r(u+j\delta)}p^{(n)}_j(t,c^{(n)}(t),u)du}
\end{align}
with $b^{(n)}(T^{(n-1)}_\delta)=c^{(n)}(T^{(n-1)}_\delta)=K$ and $b^{(n)}(t)\le K\le c^{(n)}(t)$ for $t\in[0,T^{(n-1)}_\delta]$.
\end{theorem}

%%%%%%%%%%%%%%%%%%%%%%%%%%%%%%%%%%%%%%%%%%%%%%%%%%%%%%%%%%%%%%%%%%%%%%%%%%%%%%%
%%% Appendix %%%
%%%%%%%%%%%%%%%%%%%%%%%%%%%%%%%%%%%%%%%%%%%%%%%%%%%%%%%%%%%%%%%%%%%%%%%%%%%%%%%

\appendix
\section{Appendix}
\renewcommand{\theequation}{A-\arabic{equation}}

\begin{proof}[Proof of Proposition \ref{rem:reguC^2}.]
First we show that the process
\begin{align}
Y_s:=e^{-rs}R(t\p s,X_s)\m rK\int_0^se^{-ru}f(t\p u,X_u)du\, ,\qquad u\in[0,T_\delta-t]
\end{align}
is a continuous martingale. Continuity is easily verified by continuity of $V^{(1)}$ and $f$, whereas the martingale property can be checked as follows, by Markov property and \eqref{chvarV}:
\begin{align*}
\EE_x\Big[e^{-rs}R(t\p s,X_s)\Big]=&\EE_x\Big[e^{-r(s+\delta)}\EE_{X_s}\Big[V^{(1)}(t\p s\p \delta,X_{\delta})\Big]\Big]\\
=&\EE_x\Big[e^{-r(s+\delta)}V^{(1)}(t\p s\p \delta,X_{s+\delta})\Big]\\
=&\EE_x\Big[e^{-r\delta}V^{(1)}(t\p \delta,X_{\delta})\m rK\int_\delta^{\delta+s}e^{-ru}I (X_u<b^{(1)}(t\p u)) du\Big].
\end{align*}
Now changing variables in the integral, taking iterated expectations and using Markov property we finally get
\begin{align*}
\EE_x\Big[e^{-rs}R(t\p s,X_s)\Big]=&R(t,x)\m rK\int_0^se^{-r(u+\delta)}\EE_x\big[I (X_{\delta+u}<b^{(1)}(t\p \delta\p u))\big]du\\
=&R(t,x)\m rK\int_0^se^{-ru}\EE_x\big[e^{-r\delta}\PP_{X_u}(X_\delta<b^{(1)}(t\p\delta\p u))\big]du\\
=&R(t,x)\m rK\EE_x\Big[\int_0^se^{-ru}f(t\p u,X_u)du\Big].
\end{align*}
Hence $Y$ is a martingale as claimed.

To prove \eqref{LR00} let $\cD\subset (0,T_\delta)\times(0,\infty)$ be an arbitrary rectangular, open, bounded domain with parabolic boundary $\partial_P\cD$. Since $R\in C([0,T_\delta]\times(0,\infty))$ it is well known (cf.~for instance \cite[Thm.~9, Sec.~4, Ch.~3]{Fri}) that the problem
\begin{align}\label{Cauchy}%\hs{+3pc}
u_t+\L_Xu-ru=-rKf\quad\text{on $\cD$ with $u=R$ on $\partial_P\cD$}
\end{align}
admits a unique classical solution $u_{\cD}\in C^{1,2}(\cD)\cap C(\overline{\cD})$. For $(t,x)\in\cD$ and $\tau_\cD$ the first exit time of $(t\p s,X_s)$ from $\cD$ we can apply Dynkin formula to obtain
\begin{align*}
u_{\cD}(t,x)=&\EE_x\Big[e^{-r\tau_\cD}u_\cD(t\p \tau_\cD,X_{\tau_\cD})+rK\int_0^{\tau_\cD}e^{-rs}f(t\p s,X_s)ds\Big]\\
=&\EE_x\Big[e^{-r\tau_\cD}R(t\p \tau_\cD,X_{\tau_\cD})+rK\int_0^{\tau_\cD}e^{-rs}f(t\p s,X_s)ds\Big]=R(t,x)
\end{align*}
where the last equality follows by the martingale property proved above.
Therefore $u_{\cD}=R$ on $\overline{\cD}$ and by arbitrariness of $\cD$ one has $R\in C^{1,2}((0,T_\delta)\times(0,\infty))$. Finally \eqref{LR00} and \eqref{3.2} imply \eqref{ltsf-3}.
\end{proof}
\vs{4pt}

\begin{proof}[Proof of eq.~\eqref{3.31}]

Since $C^{(2)}$ is a non empty open set we can consider an open, bounded rectangular domain $\mathcal{D}\subset C^{(2)}$ with parabolic boundary $\partial_P\mathcal{D}$. Then the following boundary value problem
\begin{align}\label{Cauchy2}%\hs{+3pc}
u_t+\L_Xu-ru=0\quad\text{on $\cD$ with $u=V^{(2)}$ on $\partial_P\cD$}
\end{align}
admits a unique classical solution $u\in C^{1,2}(\cD)\cap C(\overline{\cD})$ (cf.~for instance \cite[Thm.~9, Sec.~4, Ch.~3]{Fri}). Fix $(t,x)\in\cD$ and denote $\tau_\cD$ the first exit time of $(t+s,X^x_s)_{s\ge 0}$ from $\cD$. Then Dynkin's formula gives
\begin{align*}
u(t,x)=\EE e^{-r\tau_\cD}u(t\p \tau_\cD,X^x_{\tau_\cD})=\EE e^{-r\tau_\cD}V^{(2)}(t\p \tau_\cD,X^x_{\tau_\cD})=V^{(2)}(t,x)
\end{align*}
where the last equality follows from the fact that $e^{-r s\wedge\tau^*}V^{(2)}(t\p (s\wedge\tau^*),X^x_{s\wedge\tau^*})$, $s\ge 0$ is a martingale according to standard optimal stopping theory and $\tau_\cD\le \tau^*$, $\PP$-a.s.
\end{proof}
%%%%%%%%%%%%%%%%%%%%%%%%%%%%%%%%%%%%%%%%%%%%%%%%%%%%%%%%%%%%%%%%%%%%%%%%%%%%%%%
\noindent\ackn{The first named author was supported by EPSRC grant EP/K00557X/1. Both authors are grateful to G.~Peskir for many useful discussions}

%%%%%%%%%%%%%%%%%%%%%%%%%%%%%%%%%%%%%%%%%%%%%%%%%%%%%%%%%%%%%%%%%%%%%%%%%%%%%%%
%%% References %%%
%%%%%%%%%%%%%%%%%%%%%%%%%%%%%%%%%%%%%%%%%%%%%%%%%%%%%%%%%%%%%%%%%%%%%%%%%%%%%%%

\begin{center}

\end{center}
\newpage

%%%%%%%%%%%%%%%%%%%%%%%%%%%%%%%%%%%%%%%%%%%%%%%%%%%%%%%%%%%%%%%%%%%%%%%%%%%%%%%
%%% Affiliations %%%
%%%%%%%%%%%%%%%%%%%%%%%%%%%%%%%%%%%%%%%%%%%%%%%%%%%%%%%%%%%%%%%%%%%%%%%%%%%%%%%

%\par \leftskip=24pt

%\ni Tiziano De Angelis \\
%School of Mathematics \\
%The University of Manchester \\
%Oxford Road \\
%Manchester M13 9PL \\
%United Kingdom \\
%\texttt{tiziano.deangelis@manchester.ac.uk}
%\vs{+2pc}

%\par \rightskip=24pt

%\ni Yerkin Kitapbayev \\
%School of Mathematics \\
%The University of Manchester \\
%Oxford Road \\
%Manchester M13 9PL \\
%United Kingdom \\
%\texttt{yerkin.kitapbayev@manchester.ac.uk}

\end{document}